\renewcommand\eqref[1]{(\ref{#1})} 
\def\A{{\mathcal A}}
\def\th{\theta}
\def\Th{\Theta}
\def\N{\mathbb{N}}
\def\Z{\mathbb{Z}}
\def\H{\mathcal H}
\def\R{\mathbb{R}}
\def\({\left(}
\def\[{\left[}
\def\){\right)}
\def\]{\right]}
\def\si{\sigma}
\def\Si{\Sigma}
\def\p{\parallel}
\def\<{\langle}
\def\>{\rangle}
\def\d{\mathrm{d}}
\def\r{\mathrm{r}}
\def\Cl{\mathrm{Cl}}
\def\Ouv{\mathcal O}
\def\T{\mathbf T}
\def\U{\mathcal U}
\newtheorem{Theorem}{Theorem}[section]
\newtheorem{Remark}[Theorem]{Remark}
\newtheorem{Lemma}[Theorem]{Lemma}
\newtheorem{Corollary}[Theorem]{Corollary}
\newtheorem{Proposition}[Theorem]{Proposition}
\newtheorem{Definition}[Theorem]{Definition}
\newtheorem{Example}[Theorem]{Example}
\newtheorem{Setting}[Theorem]{Setting}
\numberwithin{equation}{section}
\begin{document}


\title{On Persson's Formula; an \'Etale Groupoid Approach}

\date{\today}

\author{M. M\u antoiu\footnote{
\textbf{2010 Mathematics Subject Classification:} Primary: 37A55, 47A25, Secondary: 22A22, 37B10.
\newline
\textbf{Key Words:}  essential spectrum, groupoid, symbolic dynamical system, discrete metric space, Fell topology, $C^*$-algebra.}
}
\date{\small}
\maketitle \vspace{-1cm}


\begin{abstract}
Persson's formula expresses the infimum of the essential spectrum of a suitable self-adjoint Schr\"odinger operator in $\R^n$ in terms of the lower spectral points of a family of restrictions of the operator to complements of relatively compact subsets. It has been extended to other situations. We present an approach based on $C^*$-algebras associated to \'etale groupoids. In such a setting there are intrinsic versions, referring to self-adjoint elements of the groupoid $C^*$-algebras. When representations in Hilbert spaces are considered, the results not always involve only the essential spectrum. The range of applications is quite distinct from the traditional one. We indicate examples related to symbolic dynamics and band dominated operators on discrete metric spaces. The treatment needs only a small amout of symmetry. Even when group actions are involved, restrictions to non-invariant subsets are needed and have to be treated carefully.
\end{abstract}


\section{Introduction}\label{oameni}

Let us describe Persson's result \cite{Pe} in a simplified form,. In the Hilbert space $\H:=L^2(\R^n)$ one considers the positive Laplace operator $\Delta:=-\sum_{j=1}^n\!\partial_j^2$\,, the multiplication operator $V$ by a function $V:\R^n\to\R$\,, and the Schr\"odinger operator $H:=\Delta+V$. We also denote by $\mathcal K(\R^n)$ the family of all compact subsets of $\R^n$, and  if $A$ is an open subset of $\R^n$, we write $\mathcal D(A)$ for $C^\infty_{\rm c}(A)$\,. For any subset $B$ of $\R^n$, the notation $B^{\rm c}$ stands for its complement. For simplicity, and since the technical conditions on the potential are not very important here, we require much more than in \cite{Pe}. 

\begin{Theorem}\label{persson}
If $\,V$ is real, continuous and bounded, then one has 
\begin{equation}\label{bathica}
\inf{\sf sp}_{\rm ess}(H)=\underset{K\in\mathcal K(\R^n)}{\sup}\,\underset{\underset{\p u\p=1}{u\in\mathcal D(K^{\rm c})}}{\inf}\<Hu,u\>\,.
\end{equation}
\end{Theorem}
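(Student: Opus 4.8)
The plan is to establish the two inequalities in \eqref{bathica} separately, working throughout with the closed quadratic form $\mathfrak h(u):=\<Hu,u\>=\int_{\R^n}\big(|\nabla u|^2+V|u|^2\big)$, whose form domain is the Sobolev space $H^1(\R^n)$ because $V$ is bounded; I write $\Lb$ for the right-hand side of \eqref{bathica} and $E:=\inf\spe(H)$. Two standard tools will be used repeatedly. First, by Rellich's theorem, multiplication by any bounded compactly supported function is relatively compact with respect to $H$, so that $\chi_K(H+c)^{-1/2}$ is a compact operator for every $K\in\mathcal K(\R^n)$ and $c$ large. Second, the IMS localization identity: for a smooth partition of unity $\phi^2+\psi^2=1$ one has $\mathfrak h(u)=\mathfrak h(\phi u)+\mathfrak h(\psi u)-\<Wu,u\>$, where $W=|\nabla\phi|^2+|\nabla\psi|^2$ is bounded and supported in the transition region.

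To prove $\Lb\le E$, I fix $K\in\mathcal K(\R^n)$ and $\varepsilon>0$ and must produce a unit vector in $\mathcal D(K^{\rm c})$ on which $\mathfrak h$ is at most $E+\varepsilon$. Since $E$ is the bottom of the essential spectrum, the spectral subspace $P_{[E,E+\varepsilon)}(H)\H$ is infinite-dimensional; choosing an orthonormal sequence $(v_n)$ inside it gives $\mathfrak h(v_n)<E+\varepsilon$ and $v_n\rightharpoonup 0$. Because $\chi_K(H+c)^{-1/2}$ is compact and the $v_n$ are bounded in form norm, $\p\chi_K v_n\p\to 0$, so the mass of $v_n$ escapes $K$. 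Multiplying by a smooth cutoff equal to $1$ off a neighborhood of $K$ and applying the localization identity, the resulting vectors, now supported away from $K$, satisfy $\p u_n\p\to 1$ and $\limsup\mathfrak h(u_n)\le E+\varepsilon$; a further approximation in form norm places them in $\mathcal D(K^{\rm c})$. Normalizing and letting $\varepsilon\to0$ yields $\inf\{\mathfrak h(u):u\in\mathcal D(K^{\rm c}),\p u\p=1\}\le E$ for every $K$, hence $\Lb\le E$.

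For the reverse inequality $E\le\Lb$, I fix $\lb<\Lb$ and show $\lb\notin\spe(H)$. By definition of $\Lb$ there is a $K\in\mathcal K(\R^n)$ and $\delta>0$ with $\mathfrak h(u)\ge(\lb+\delta)\p u\p^2$ for all $u\in\mathcal D(K^{\rm c})$, and by density this persists for $H^1$ functions supported in $K^{\rm c}$. Choosing $\phi,\psi$ as above with $\phi$ compactly supported and $\psi$ supported in $K^{\rm c}$, the localization identity gives $\mathfrak h(u)\ge\mathfrak h(\phi u)+(\lb+\delta)\p\psi u\p^2-\<Wu,u\>$. The term $\mathfrak h(\phi u)$ involves only the behaviour of $H$ on a fixed relatively compact open set, where the Dirichlet realization has compact resolvent and thus only finitely many eigenvalues below $\lb$; together with the compactness of $W(H+c)^{-1/2}$ this lets me bound the number of eigenvalues of $H$ below $\lb$ by a finite-dimensional count, through the min-max principle. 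Consequently $\sp(H)\cap(-\infty,\lb]$ is finite, so $\lb<E$; letting $\lb\uparrow\Lb$ gives $E\ge\Lb$, completing the proof.

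The main obstacle is the second inequality: quantitatively turning the coercivity estimate on $\mathcal D(K^{\rm c})$ into a genuine finiteness statement for the low-lying spectrum. The delicate points are controlling the cross term $\<Wu,u\>$ and the compact part $\mathfrak h(\phi u)$ simultaneously---this is exactly where the compactness of the resolvent on the bounded region and the boundedness of $V$ are indispensable---and making the Glazman-type reduction to finite codimension rigorous via the min-max principle.
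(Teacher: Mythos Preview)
The paper does not actually prove this theorem. Theorem~\ref{persson} appears in the Introduction solely as a statement of Persson's classical result \cite{Pe}, serving as motivation for the paper's own programme. The paper's contribution lies elsewhere: it develops a groupoid $C^*$-algebraic machinery (Sections~\ref{siena}--\ref{talanganit}) that yields Persson-type formulae in discrete settings such as band-dominated operators on metric spaces and subshifts, and the author explicitly remarks that continuous models with unbounded operators like the Schr\"odinger operator in $\R^n$ are outside the present scope. So there is no ``paper's own proof'' of this statement to compare against.

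Your sketch follows the classical route---Weyl-sequence construction combined with IMS localization and Rellich compactness for $\Lb\le E$, and a Glazman/min-max finiteness argument for $E\le\Lb$---which is essentially the approach found in \cite{Ag,CFKS}. The outline is sound. Two small points worth tightening: in the first half, the spectral subspace $P_{[E,E+\varepsilon)}(H)\H$ need not be infinite-dimensional (it could be trivial if $E$ is an isolated point approached only from above), so it is safer to work with $P_{[E-\varepsilon,E+\varepsilon]}$ or invoke a genuine Weyl sequence for $E\in\spe(H)$; in the second half, the passage from ``coercivity on $\mathcal D(K^{\rm c})$ plus compact localization error'' to ``finitely many eigenvalues below $\lambda$'' deserves one more line---typically one writes $H-\lambda\ge \delta-C\,\phi^2-W$ in form sense with $C\phi^2+W$ a relatively compact perturbation, and then invokes Weyl's essential-spectrum theorem directly rather than counting eigenvalues. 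These are standard refinements, not genuine gaps.

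By contrast, the paper's methodology is entirely different in spirit: it recasts the problem in terms of restriction maps between groupoid $C^*$-algebras and continuity of norms along a Fell-convergent family of closed unit subsets, so that the Persson formula becomes a statement about $\sup/\inf$ of spectra in quotient algebras (Theorem~\ref{dorita}). That machinery, as the author notes, does not presently cover the unbounded continuous Schr\"odinger case of Theorem~\ref{persson}.
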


The expression $\underset{\underset{\p u\p=1}{u\in\mathcal D(K^{\rm c})}}{\inf}\<Hu,u\>$ is actually the infimum of the Dirichlet operator 
$H_{K^{\rm c}}\!:=\Delta_{K^{\rm c}}+V$ in $\H_{K^{\rm c}}\!:=L^2\big(K^{\rm c}\big)$\,. Thus \eqref{bathica} can be rewritten as
\begin{equation}\label{badica}
\inf{\sf sp}_{\rm ess}(H)=\underset{K\in\mathcal K(\R^n)}{\sup}\inf{\sf sp}\big(H_{K^{\rm c}}\big)\,.
\end{equation}
One finds in \cite{Ag,CFKS,DLMSY,Gr,KL,LS} (see also the references therein) many other results of this type. Among others, one covers Laplace-Beltrami operators on Riemannian manifolds, operators on graphs and $\alpha$-stable processes in the Euclidean space. We do not try to give a comprehensive overview of these references, since our interest and techniques point in a quite different direction (but we refer to \cite{LS} for a recent very useful presentation). Roughly, one can say that we are confined to bounded operators associated to a discrete setting, exploiting the structure of certain \'etale groupoid $C^*$-algebras. Extensions to other situations seem possible, but they would need extra effort and some new ideas; hopefully this will be be the subject of a subsequent article.

\smallskip
We start by describing one of our results. Let us fix an infinite uniformly discrete metric space $(X_0,\delta)$ with bounded geometry, satisfying Yu's Property A (the technical terms will be explained in Section \ref{grafologi}).  We consider linear bounded operators ${\sf H}$ in the Hilbert space $\ell^2(X_0)$ given in matrix-form by
\begin{equation}\label{frayeur}
[{\sf H}(u)](x)=\sum_{y\in X_0}\!H(x,y)u(y)\,,\quad\forall\,x\in X_0\,.
\end{equation}
A simple class is that of {\it band operators}, for which $\sup_{x,y}|H(x,y)|<\infty$ and there exists $r>0$ such that $H(x,y)=0$ if $\delta(x,y)>r$.
A linear bounded operator is {\it band dominated} if it is a norm-operator limit of band operators.
Let us define $\mathbf S_0$ to be the family of subsets $M_0\subset X_0$ with finite complement.
Assuming that \eqref{frayeur} is a band dominated operator in $\ell^2(X_0)$\,, then
\begin{equation}\label{freier}
\big[{\sf H}_{M_0}(v)\big](a)=\sum_{b\in M_0}\!H(a,b)v(b)\,,\quad\forall\,a\in M_0
\end{equation}
defines a band dominated operator in $\ell^2(M_0)$ for every $M_0\in\mathbf S_0$\,.
For every linear bounded operator ${\sf T}$, we denote by ${\sf sp}({\sf T})$ its spectrum and by ${\sf sp}_{\rm ess}({\sf T})$ its essential spectrum.

\begin{Theorem}\label{uvert}
If $\,{\sf H}$ is self-adjoint, then
\begin{equation}\label{cealalta}
\inf{\sf sp_{\rm ess}}({\sf H})=\sup_{M_0\in\mathbf S_0}\!\big[\inf {\sf sp}\big({\sf H}_{M_0}\big)\big]\quad{\rm and}\quad\sup{\sf sp_{\rm ess}}({\sf H})=\inf_{M\in\mathbf S_0}\!\big[\sup {\sf sp}\big({\sf H}_{M_0}\big)\big]\,.
\end{equation}
\end{Theorem}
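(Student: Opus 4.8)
The plan is to deduce both equalities in \eqref{cealalta} from a single one by a duality, and then prove that one by a localized min--max argument of Persson type. First I would note that the second identity is the first one applied to $-{\sf H}$: the band dominated operators form a $C^*$-algebra, so $-{\sf H}$ is again self-adjoint and band dominated, \eqref{freier} gives $(-{\sf H})_{M_0}=-({\sf H}_{M_0})$, and the elementary relations $\inf{\sf sp}(-{\sf T})=-\sup{\sf sp}({\sf T})$ and $\inf{\sf sp}_{\rm ess}(-{\sf H})=-\sup{\sf sp}_{\rm ess}({\sf H})$ convert the first equality for $-{\sf H}$ into the second for ${\sf H}$. So it suffices to prove $\inf{\sf sp}_{\rm ess}({\sf H})=\sup_{M_0\in\mathbf S_0}\inf{\sf sp}({\sf H}_{M_0})$. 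The one structural fact I record at the outset is that for every $M_0\in\mathbf S_0$ one has ${\sf sp}_{\rm ess}({\sf H}_{M_0})={\sf sp}_{\rm ess}({\sf H})$: writing $P_{M_0}$ for the projection of $\ell^2(X_0)$ onto $\ell^2(M_0)$ and $F:=X_0\setminus M_0$ (finite, so $I-P_{M_0}$ has finite rank), the operator ${\sf H}_{M_0}\oplus 0_{\ell^2(F)}=P_{M_0}{\sf H}P_{M_0}$ differs from ${\sf H}$ by $(I-P_{M_0}){\sf H}+P_{M_0}{\sf H}(I-P_{M_0})$, which is finite rank, and a finite-dimensional direct summand leaves the essential spectrum unchanged.

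For the inequality $\sup_{M_0}\inf{\sf sp}({\sf H}_{M_0})\le\inf{\sf sp}_{\rm ess}({\sf H})$ I would combine this with the trivial bound $\inf{\sf sp}({\sf T})\le\inf{\sf sp}_{\rm ess}({\sf T})$: for each $M_0$, $\inf{\sf sp}({\sf H}_{M_0})\le\inf{\sf sp}_{\rm ess}({\sf H}_{M_0})=\inf{\sf sp}_{\rm ess}({\sf H})$, and the supremum preserves this. (Since $M_0'\subset M_0$ forces $\ell^2(M_0')\subset\ell^2(M_0)$, the number $\inf{\sf sp}({\sf H}_{M_0})=\inf\{\<{\sf H}u,u\>:u\in\ell^2(M_0),\,\p u\p=1\}$ is nondecreasing as $M_0$ shrinks, so this supremum is in fact a limit along the net of cofinite sets.)

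The reverse inequality is the crux. Fix $\varepsilon>0$, set $\mu:=\inf{\sf sp}_{\rm ess}({\sf H})$ and $\lambda:=\inf{\sf sp}({\sf H})$, and let $E:=E_{(-\infty,\,\mu-\varepsilon]}({\sf H})$. Because this interval sits strictly below the essential spectrum, $E$ is a \emph{finite rank} projection. Since $E$ commutes with ${\sf H}$, for any unit vector $u\in\ell^2(M_0)$ the cross terms drop out and
\begin{equation*}
\<{\sf H}u,u\>=\<{\sf H}Eu,Eu\>+\<{\sf H}(I-E)u,(I-E)u\>\ge\lambda\p Eu\p^2+(\mu-\varepsilon)\p(I-E)u\p^2\ge(\mu-\varepsilon)-C\p Eu\p^2,
\end{equation*}
where $C:=(\mu-\varepsilon-\lambda)_+$. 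As $u=P_{M_0}u$, we have $\p Eu\p\le\p EP_{M_0}\p$. The decisive point is that the finite rank, hence compact, operator $E$ satisfies $\p EP_{M_0}\p\to0$ as $F=X_0\setminus M_0$ exhausts $X_0$: indeed $P_{M_0}\to0$ in the strong operator topology, and $K\,T_\alpha\to0$ in norm whenever $K$ is compact and $T_\alpha\to0$ strongly (uniform boundedness forces the strong convergence to be uniform on the precompact image of the unit ball under $K$). Hence for $F$ large enough $\inf{\sf sp}({\sf H}_{M_0})\ge(\mu-\varepsilon)-C\p EP_{M_0}\p^2>\mu-2\varepsilon$, and letting $\varepsilon\downarrow0$ yields $\sup_{M_0}\inf{\sf sp}({\sf H}_{M_0})\ge\mu$.

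I expect the localization estimate $\p EP_{M_0}\p\to0$ to be the sole point of friction, and it is instructive: it is exactly where cofiniteness of $M_0$ and compactness of the low-energy spectral projection enter, and it is the mechanism by which passing to cofinite compressions amounts to working modulo the compacts, i.e. with the essential spectrum. Notably, neither the band dominated hypothesis nor Property A is used in this argument; their role is to keep each ${\sf H}_{M_0}$ in the same operator class, as in \eqref{freier}, and to embed this statement in the \'etale groupoid framework, within which the cofinite family $\mathbf S_0$ is the special case that isolates ${\sf sp}_{\rm ess}$.
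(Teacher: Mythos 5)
Your proof is correct in substance, but it follows a genuinely different route from the paper. The paper never touches spectral projections: it realizes the uniform Roe algebra as the $C^*$-algebra of the coarse groupoid $\Xi(X_0,\delta)$ over the Stone--\v Cech compactification $\beta X_0$ (Propositions \ref{delaroe} and \ref{ocluzie}, using Property A for amenability), takes $\T_0$ to be the sets $M=M_0\cup X_\infty$ with $M_0$ cofinite (Lemma \ref{pisicher} identifies $X_\infty$ as their Fell limit), and then quotes Theorem \ref{consecinte}, whose proof ultimately rests on the norm-continuity of the field of restricted groupoid algebras (Proposition \ref{away}) and the identification ${\sf sp}_{\rm ess}({\sf H})={\sf sp}(h_\infty)$ through the Calkin algebra. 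Your argument is elementary operator theory: the decomposition principle via a finite-rank (Weyl) perturbation, the trivial bound $\inf{\sf sp}\le\inf{\sf sp}_{\rm ess}$, and a Persson-style localization using the finite-rank spectral projection $E=E_{(-\infty,\mu-\varepsilon]}({\sf H})$. What your approach buys is generality in one direction and economy everywhere: it proves \eqref{cealalta} for an \emph{arbitrary} bounded self-adjoint operator on $\ell^2(X_0)$, with no band-dominated hypothesis, no Property A, and no groupoid -- you correctly observe that those hypotheses only serve to place ${\sf H}$ and its compressions in the Roe algebra. What the paper's machinery buys is a uniform framework in which the same theorem (Theorem \ref{consecinte}) also yields Theorem \ref{constiinte} on subshifts and the intrinsic, representation-free statement of Theorem \ref{dorita}, where the restriction sets $M$ are far from cofinite and no elementary finite-rank argument is available.

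One caveat: the auxiliary lemma you invoke, ``$\p K T_\alpha\p\to 0$ whenever $K$ is compact and $T_\alpha\to 0$ strongly,'' is false as stated -- take $K=\<\cdot,e_1\>e_1$ and $T_n=(S^*)^n$ the adjoint powers of the unilateral shift, for which $\p KT_n\p=1$ for all $n$. The correct general statement (and the one your parenthetical argument actually proves) is $\p T_\alpha K\p\to 0$, i.e. compactness must sit on the side where the strong convergence acts. Your application is nevertheless sound, because here both operators are self-adjoint: $\p EP_{M_0}\p=\p(EP_{M_0})^*\p=\p P_{M_0}E\p\to 0$; alternatively, since $E$ has finite rank, writing $E=\sum_{j=1}^n\<\cdot,\phi_j\>\psi_j$ gives directly $\p EP_{M_0}\p\le\sum_j\p P_{M_0}\phi_j\p\,\p\psi_j\p\to 0$. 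So this is a misstatement of the lemma, not a gap in the proof; I would simply rephrase that step.
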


To get this result, for us it is important that the band dominated operators form a unital $C^*$-subalgebra of $\mathbb B\big[\ell^2(X_0)\big]$, called {\it the uniform Roe algebra of the metric space}. This is also true if one replaces $X_0$ by its subset $M_0$\,. The operator ${\sf H}_{M_0}$ is a compression of ${\sf H}$ to the Hilbert subspace $\ell^2(M_0)\subset\ell^2(X_0)$\,, but the operation ${\sf H}\to{\sf H}_{M_0}$ is not implemented by a morphism of $C^*$-algebras. In a symmetric framework, if $X_0$ happens to be a (discrete) group (and Roe's algebra is also a crossed product defined by the action on its Stone-\u Cech compactification), $M_0$ is no longer a group and the symmetry is greatly reduced. 

\smallskip
Two ideas emerge: (a) if $C^*$-algebras are to be used in proving Persson's type formulae, one has to face situations when some of these $C^*$-algebras are connected in a non-multiplicative way and (b) the relationship between the configuration spaces on which the operators act and the $C^*$-algebras that naturally contain them should involve a mathematical object allowing very weak forms of symmetry.
A convenient solution is to work with {\it locally compact groupoids}, that have interesting and well-studied associated $C^*$-algebras \cite{Pa,Re,Wi1} as soon as they admit Haar systems. They are very general concepts, allowing the unified treatment of many particular cases.

\smallskip
For simplicity, and because of the applications we had in view, some technical restrictions have been adopted. One was {\it amenability} (for band dominated operators this is related to Yu's Property A). More restrictive is our assumption that the groupoid is {\it \'etale}, leading roughly to the discreteness of the models that we can treat and to boundedness of the relevant operators. Since they are bounded, the supremum of the (essential) spectrum can be equally treated. Non-\'etale groupoids would surely be useful, but some of the arguments (as the non-invariant restriction issue, to be discussed below) would be more difficult. The "continuous models" that they are supposed to cover also present an extra non-trivial difficulty: the natural operators are unbounded, so the connection with $C^*$-algebras should be done via some affiliation techniques, in terms of functional calculus and resovent families. But making this compatible with the groupoid and $C^*$-algebraic formalism seems to be quite delicate and deserves further investigation.

\smallskip
Section \ref{siena} briefly describes the type of locally compact groupoids that we consider and their $C^*$-algebras. We think that this is not the right place to define topological groupoids. But a reader familiar with the very basic facts will be able to follow. Restrictions to closed but non-invariant sets of units of the groupoid will be needed, and this is discussed here. Since non-invariant restrictions might not inherit a continuous Haar system from the the initial groupoid, we introduce {\it tame subsets}. In our \'etale setting there are many natural clopen subsets, that are tame.

\smallskip
Part of our proof of the Persson type formula rely on continuity arguments. The basic topological space used to express this is the family of all closed sets of the compact unit space of the groupoid. In Section \ref{troscanit} we recall briefly its Fell (hyperspace) topology. This allows stating an intrinsic spectral result referring to self-adjoint elements of the groupoid $C^*$-algebra. The proof is given in Section \ref{stendhal}.

\smallskip
Continuity issues are nicely treated if one has a continuous field of groupoids (leading to continuous field of $C^*$-algebras). In Section \ref{danganit} we make the necessary preparations, inspired by the approach in \cite{LR,BBdN1,Bec}; see also \cite{AMP,BBdN2,Be}. Note that in \cite{BBdN1,Bec} the important {\it tautological groupoid} is constructed only with restrictions to invariant closed sets of units, but this was enough to get results to which we have no access here. On the other hand, in the present article, considering families of {\it non-invariant subsets} is critical. Such situation needs some extra arguments and some suitable extra assumptions. The reference \cite{BBdN1} covers groupoid $2$-cocycles, leading to {\it twisted} groupoid $C^*$-algebras, that can be used to model magnetic fields. We could do the same, but for simplicity we treat only the non-twisted case.

\smallskip
To arrive at self-adjoint operators (Hamiltonians), one uses Hilbert space representations of the groupoid $C^*$-algebra. For such algebras there are proliferations of representations, but those we are going to use (regular representations induced from units) are natural. Section \ref{balanganit} put them into evidence and deduce an operatorial Persson type formula from the previous intrinsic Theorem \ref{dorita}. There is still no essential spectra in view.

\smallskip
Section \ref{talanganit} is dedicated to converting all these into a result on the essential spectrum. For this a special type of groupoids is required, which do appear naturally in applications. Their unit space must possess an open dense orbit $X_0$ with trivial isotropy groups. This allows to introduce a canonical {\it vector representation} in $\ell^2(X_0)$ and to identify the $C^*$-algebra of the (invariant) groupoid restriction to $X_0$ with the ideal of all the compact operators in this Hilbert space. In its turn this leads to identify a certain spectrum appearing in Theorem \ref{dorita} with the essential spectrum of the relevant operator acting in $\ell^2(X_0)$\,. Theorem \ref{consecinte}, although not the most general, may be considered our main abstract result.

\smallskip
In Section \ref{geografi} we apply the abstract formulae to a special type of dynamical systems, belonging to the theory of symbolic dynamics: the {\it bi-sided subshifts} constructed on a finite alphabet. Very roughly, the covered operators are compressions (Toeplitz versions) of  parameter-dependent pseudodifferential operators on the group $\Z$\,. More general actions of discrete groups are also possible. The high symmetry of the initial system is compatible with the use of crossed product $C^*$-algebras. But as soon as restrictions are taken into account this is no longer possible, only actions by partial homeomorphism being still available. Groupoids tackle this conveniently. One-sided shifts are more complicated, they could also be treated by our method, but we decided to include them in a larger perspective in a future publication, including inverse semigroups and also involving other types of spectral results.

\smallskip
Section \ref{grafologi} is dedicated to the proof of Theorem \ref{uvert}. A graph-theoretical interpretation is provided.

\smallskip
A Decomposition Principle \cite{LS,Ma} says that the essential spectrum of a Hamiltonian $H$ acting in an $L^2(X)$-space coincides with the essential spectrum of its "restriction" $H_Y$ to any complement $Y$ of a relatively compact subset. Besides its intrinsic interest, this could also be a step towards proving a Persson formula. In \cite{LS} (see also references cited therein), by analytical and probabilistic methods, such a Decomposition Principle has been obtained for operators associated to regular Dirichlet forms, under an extra relative compactness assumption. In \cite{Ma} it has been proven in a different context, by twisted groupoid $C^*$-algebra methods. In the present article, such a Decomposition Principle is not needed.

\section{Manageable groupoids}\label{siena}

We begin by introducing the class of groupoids we will work with.  At certain stages less conditions would be required, but we prefer to stay in a unified framework, leaving to the reader the task to extend the generality whenever this is possible.

\begin{Definition}\label{manageable}
Let $\Xi$ be a Hausdorff locally compact $\si$-compact groupoid with compact unit space $X$, structural maps $\r, \d:\Xi\to X$  and family of composable elements 
$$
\Xi^{(2)}\!:=\{(\xi,\eta)\in\Xi\times\Xi \mid \d(\xi)=\r(\eta)\}\,.
$$ 
We say that it is {\rm manageable} if it is \'etale and amenable.
\end{Definition}

In a couple of remarks, we are going to provide some explanations or indicate some consequences.

\begin{Remark}\label{ital}
{\rm {\it The \'etale condition} means that $\r, \d$ are local homeomorphisms when seen as maps $\Xi\to\Xi$\,. Let us mention consequences of the fact that $\Xi$ is \'etale: 
\begin{itemize}
\item 
$X\subset\Xi$ is a clopen subset, 
\item
$\d,\r$ and the multiplication are continuous open maps,
\item
The fibers $\Xi_x\!:=\{\xi\in\Xi\!\mid\!\d(\xi)=x\}$ and $\Xi^x\!:=\{\xi\in\Xi\!\mid\!\r(\xi)=x\}$ are discrete in the induced topology,
\item
The counting measures on each $\Xi_x$ form a right Haar system $\{\lambda_x\!\mid\! x\in X\}$\,.
\end{itemize} 
}
\end{Remark}

\begin{Remark}\label{alghebrele}
{\rm The locally compact \'etale groupoid is called {\it amenable} \cite[Def.\,5.6.13]{BO} if there exists a net $\{\phi_i\!\mid i\in I\}$ of positive continuous compactly supported functions on $\Xi$ such that
$$
\sum_{\eta\in\Xi_{\r(\xi)}}\!\phi_i(\eta)\underset{i}{\to}1\,,\quad\sum_{\eta\in\Xi_{\r(\xi)}}\!\big\vert\phi_i(\eta\xi)-\phi_i(\eta)\big\vert\underset{i}{\to}0\,
$$
uniformly on compact subsets of $\Xi$\,.  There are reformulations of this condition and for non-\'etale groupoids the theory is more involved. The only way we make use of amenability is through its consequence outlined in the next remark.
}
\end{Remark}

\begin{Remark}\label{algebrele}
{\rm We recall that for every \'etale locally compact groupoid $\Xi$\,, the space $C_{\rm c}(\Xi)$ of all continuous compactly supported complex functions defined on $\Xi$ has a natural structure of $^*$-algebra. The product and the involution are given by
\begin{equation*}\label{roduct}
(f\star g)(\xi):=\sum_{\eta\zeta=\xi}f(\eta)g(\zeta)=\!\sum_{\zeta\in\Xi_{\d(\xi)}}\!f\big(\xi\zeta^{-1}\big)g(\zeta)\,,\quad f^\star(\xi):=\overline{f(\xi^{-1})}\,.
\end{equation*}
There are (at least) two interesting $C^*$-completions: ${\rm C}^*[\Xi]$ (the full groupoid $C^*$-algebra) and ${\rm C}^*_{\rm r}[\Xi]$ (the reduced groupoid $C^*$-algebra). In general the later is a quotient of the former, but in many situations they are isomorphic; 
this happens, in particular, if $\Xi$ is amenable, cf. \cite[Cor.\,5.6.17]{BO} for example.}
\end{Remark}

\begin{Remark}\label{aljebrele}
{\rm In our case, since $\Xi$ is \'etale and its unit space is compact, ${\rm C}^*[\Xi]={\rm C}^*_{\rm r}[\Xi]$ is unital, the unit being the characteristic function of the unit space.}
\end{Remark}

\begin{Remark}\label{han}{\rm Assuming that $\Xi$ is manageable, one has the following continuous dense embeddings
\begin{equation*}\label{siruite}
C_{\rm c}(\Xi)\hookrightarrow{\rm Hahn}(\Xi)\hookrightarrow{\rm C}^*[\Xi]\hookrightarrow C_0(\Xi)\,.
\end{equation*}
Here ${\rm Hahn}(\Xi)$ is the Banach space associated to {\it the Hahn norm}
\begin{equation*}\label{ghan}
\p\!f\!\p_{{\rm Hahn}(\Xi)}\,:=\max\Big\{\sup_{x\in X}\sum_{\xi\in\Xi_x}|f(\xi)|\,,\sup_{x\in X}\sum_{\xi\in\Xi^x}|f(\xi)|\Big\}\,.
\end{equation*}
It is also a $^*$-subalgebra of ${\rm C}^*[\Xi]$\,. The last embedding (specific for the reduced algebra and for the \'etale case) involves the $C^*$-algebra of all complex continuous functions on $\Xi$ that decay at infinity, with the uniform norm $\p\!g\!\p_\infty:=\sup_{\xi\in\Xi}|g(\xi)|$\,. One has
\begin{equation*}\label{dominari}
\p\!g\!\p_\infty\,\le\,\p\!g\!\p_{{\rm C}^*[\Xi]}\,\le\,\p\!g\!\p_{{\rm Hahn}(\Xi)}.
\end{equation*}
}
\end{Remark}

For any $A,B\subset X$ we set 
\begin{equation*}\label{mixta}
\Xi_B^A:=(\r,\d)^{-1}(A,B)=\{\xi\in\Xi\!\mid\!\r(\xi)\in A\,,\d(\xi)\in B\}\,.
\end{equation*}
If $A=B=\{x\}$ for some unit $x$\,, we use the special notation 
$$
\Xi_x^x=\{\xi\in\Xi\!\mid\!\r(\xi)=x=\d(\xi)\}
$$
for {\it the isotropy group at $x$}. Recall that an equivalence relation on the unit space is given by
\begin{equation*}\label{ghivalens}
x\cong y\ \iff\ \exists\,\xi\in\Xi\ \,{\rm such\ that}\ \,d(\xi)=x\,,\;\r(\xi)=y\,.
\end{equation*}
Then $\Xi$ is partitioned in orbits corresponding to this equivalence relation. The subset $M\subset X$ is called {\it invariant} if it is a union of orbits; equivalently, if $\d(\xi)\in M$ implies $\r(\xi)\in M$.
Plenty of non-invariant closed sets of units will be needed below, and this requires a careful treatment.

\smallskip
If $M\subset X$ is closed, (thus compact) {\rm the restricted subgroupoid} $\,\Xi(M)\!:=\Xi_M^M$ is a locally compact groupoid over the unit space $M$. It is also known to be amenable, cf. \cite[Prop.3.7]{Re}. Unfortunately, $\Xi(M)$ is not automatically \'etale (although this does hold whenever $M$ is invariant). It is always $\r$-discrete, meaning by definition that its unit space $M$ is clopen in $\Xi(M)$ (and implying discreteness of $\Xi(M)_x$ and $\Xi(M)^x$). However, to be \'etale, what is missing is precisely $\r$ restricted to $\Xi(M)$ being open, and this is not always true. In addition, when this fails, there is no Haar system; this is an important issue for us. The next result is well-known:

\begin{Lemma}\label{teim}
For the closed set $M\subset X$, the following conditions are equivalent: 
\begin{enumerate}
\item[(i)] the restriction $\d_M\!:\Xi(M)\to\Xi(M)$ is open, 
\item[(ii)] the restriction $\r_M\!:\Xi(M)\to\Xi(M)$ is open, 
\item[(iii)] the multiplication $\Xi(M)^{(2)}\!\to\Xi(M)$ is open, 
\item[(iv)] the groupoid $\Xi(M)$ is \'etale, 
\item[(v)] restricting to each $\Xi(M)_x\!=\Xi^M_x$ the counting measure, one gets a Haar system $\big\{\lambda^M_x\,\big\vert\, x\in M\big\}$ on $\Xi(M)$\,.
\end{enumerate}
\end{Lemma}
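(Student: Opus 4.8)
The plan is to prove the five conditions equivalent by a single cycle, isolating the one step that is genuinely topological rather than formal. Three of the links are almost immediate. The inversion $\iota\colon\Xi(M)\to\Xi(M)$, $\iota(\xi)=\xi^{-1}$, is a homeomorphism satisfying $\r_M\circ\iota=\d_M$ and $\d_M\circ\iota=\r_M$, so that (i)$\Leftrightarrow$(ii) at once. If $\Xi(M)$ is \'etale, then $\r_M,\d_M$ are local homeomorphisms, hence open, which gives (iv)$\Rightarrow$(i),(ii); and the standard bisection picture for \'etale groupoids shows that the multiplication is then open and that the counting measures along the (discrete, locally finite) fibres vary continuously, yielding (iv)$\Rightarrow$(iii) and (iv)$\Rightarrow$(v). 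Thus the entire substance lies in recovering the \'etale property from openness of one structural map.

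The heart of the matter is (ii)$\Rightarrow$(iv), and here I would use the groupoid multiplication together with the fact, recorded before the statement, that $M$ is clopen in $\Xi(M)$. Consider the continuous map $\beta(\xi,\eta)=\xi^{-1}\eta$, defined on the fibre product $\{(\xi,\eta)\mid\r(\xi)=\r(\eta)\}$, which is continuous because inversion and multiplication are. Fixing $\xi_0$, one has $\beta(\xi_0,\xi_0)=\d(\xi_0)\in M$; since $M$ is open there is a neighbourhood $U$ of $\xi_0$ for which $\beta$ carries $(U\times U)\cap\{\r(\xi)=\r(\eta)\}$ into $M$, and as $\r(\xi)=\r(\eta)$ a unit value $\xi^{-1}\eta$ forces $\eta=\xi$. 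Hence $\r_M|_U$ is injective; combined with the hypothesis that $\r_M$ is open (a restriction of an open map to an open set is open onto its image), this makes $\r_M|_U$ a homeomorphism onto an open subset of $M$, so $\r_M$ is a local homeomorphism and $\Xi(M)$ is \'etale. I expect this to be the main obstacle, precisely because openness of $\r_M$ plus discreteness of the fibres does \emph{not} by itself deliver a local homeomorphism (the squaring map of $\mathbb C$ is open with discrete fibres yet fails to be a local homeomorphism at $0$); the algebraic structure must enter, and it does so through $\beta$ landing in the open unit space.

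It remains to close the cycle through (iii) and (v). For (iii)$\Rightarrow$(i) I would use the homeomorphism $\Phi(\xi,\eta)=(\xi,\xi\eta)$ of $\Xi(M)^{(2)}$ onto the fibre product $P=\{(\xi,\zeta)\mid\r(\xi)=\r(\zeta)\}$, under which $m=\mathrm{pr}_2\circ\Phi$, so openness of $m$ is openness of $\mathrm{pr}_2$. For open $V\subseteq\Xi(M)$ the set $\mathrm{pr}_2(\mathrm{pr}_1^{-1}(V))=\r^{-1}(\r(V))$ is then open, and intersecting with the open set $M$, on which $\r$ restricts to the identity, gives $\r(V)=\r^{-1}(\r(V))\cap M$ open, i.e. $\r_M$ open. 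For (v)$\Rightarrow$(i) I would argue by a bump function: given open $U$ and $\xi\in U$, choose $f\in C_{\rm c}(\Xi(M))$ with $0\le f$, $\supp f\subseteq U$ and $f(\xi)>0$; Haar continuity makes $g(x):=\sum_{\d(\eta)=x}f(\eta)$ continuous, whence $\{g>0\}$ is an open neighbourhood of $\d(\xi)$ contained in $\d_M(U)$, so $\d_M$ is open. Together with (i)$\Leftrightarrow$(ii) and (ii)$\Rightarrow$(iv)$\Rightarrow$(iii),(v), the implications (iii)$\Rightarrow$(i) and (v)$\Rightarrow$(i) complete all the equivalences.
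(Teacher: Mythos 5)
Your proposal is correct, but note that there is nothing in the paper to compare it against: the author states Lemma \ref{teim} as ``well-known'' and gives no proof, implicitly deferring to the literature (this is essentially Renault's characterization of $\r$-discrete groupoids admitting a Haar system, \cite[Prop.\,I.2.8]{Re}). Your argument is a complete and self-contained version of that standard proof, and all the steps check out. The key step (ii)$\Rightarrow$(iv) is handled exactly right: you use that $M$ is open in $\Xi(M)$ (the $\r$-discreteness recorded in the paper just before the statement) to pull the open unit space back through $\beta(\xi,\eta)=\xi^{-1}\eta$ and obtain a neighbourhood $U$ of $\xi_0$ on which $\r_M$ is injective, and only then invoke openness of $\r_M$ to upgrade to a local homeomorphism; your remark that openness plus discrete fibres alone would not suffice (the $z\mapsto z^2$ example) correctly identifies why the algebraic structure must enter. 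The closing implications (iii)$\Rightarrow$(ii) via the homeomorphism $(\xi,\eta)\mapsto(\xi,\xi\eta)$ onto the $\r$-fibre product, and (v)$\Rightarrow$(i) via a bump function and continuity of $x\mapsto\lambda^M_x(f)$, are also the standard devices and are carried out correctly. Two cosmetic points: your cycle-closing implications actually land on (ii) rather than (i) in the case of (iii) (harmless, since you established (i)$\Leftrightarrow$(ii) first), and in (ii)$\Rightarrow$(iv) you should say explicitly that $\d_M=\r_M\circ\iota$ is then also a local homeomorphism, since the paper's definition of \'etale requires both structural maps to be local homeomorphisms; with your inversion argument this is a one-line addendum.
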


\begin{Definition}\label{treim}
The closed set $M\subset X$ is called {\rm tame} if it satisfies the equivalent conditions of Lemma \ref{teim}. The family of all closed (thus compact) subsets of $X$ will be denoted by $\Cl(X)$\,; the tame subsets form a subfamily $\Cl_{\rm tm}(X)$\,.
\end{Definition}

\begin{Lemma}\label{daca}
If $M$ is invariant or clopen, it is tame.
\end{Lemma}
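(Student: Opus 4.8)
The plan is to verify, in each of the two cases separately, one of the equivalent conditions of Lemma~\ref{teim}. Throughout I would use that $\Xi$ is \'etale, so by Remark~\ref{ital} the maps $\d,\r:\Xi\to X$ are continuous and open and $X$ is clopen in $\Xi$, together with the fact recorded before Lemma~\ref{teim} that $\Xi(M)$ is always $\r$-discrete, \ie $M$ is clopen in $\Xi(M)$.

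First I would treat the clopen case. If $M$ is clopen in $X$ then $M$ is open in $\Xi$, and since $\d,\r$ are continuous the subgroupoid $\Xi(M)=\d^{-1}(M)\cap\r^{-1}(M)$ is open in $\Xi$. An open subgroupoid of an \'etale groupoid is again \'etale, because the local homeomorphisms $\d,\r$ restrict to local homeomorphisms on any open subset; this gives condition (iv), so $M$ is tame.

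For the invariant case the same shortcut is unavailable, since $\Xi(M)$ is then closed but typically not open. Here I would first note that invariance, $\d(\xi)\in M\Rightarrow\r(\xi)\in M$, applied also to $\xi^{-1}$ (for which $\d(\xi^{-1})=\r(\xi)$ and $\r(\xi^{-1})=\d(\xi)$) yields the converse implication, so that
\[
\Xi(M)=\Xi_M^M=\d^{-1}(M)=\r^{-1}(M)\,.
\]
I would then verify condition (ii) using the elementary fact that the restriction of an open map to a full preimage is open: if $f:A\to B$ is open and $C\subset B$, then $f:f^{-1}(C)\to C$ is open, since for open $V\subset A$ one has $f\big(f^{-1}(C)\cap V\big)=C\cap f(V)$, which is open in $C$. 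Applying this to the open map $\r:\Xi\to X$ with $C=M$ shows that $\r_M:\r^{-1}(M)=\Xi(M)\to M$ is open.

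The clopen half is thus essentially formal. The one point that needs care is in the invariant half: openness of $\r_M$ is obtained as a map into the subspace $M$, and I must convert this into openness as a self-map of $\Xi(M)$ (as literally demanded by condition (ii)); this conversion is exactly where the always-available $\r$-discreteness of $\Xi(M)$ — which makes $M$ clopen in $\Xi(M)$ — is used. I expect the saturated-preimage identity $\Xi(M)=\d^{-1}(M)=\r^{-1}(M)$ to be the real mechanism making the invariant case work despite $\Xi(M)$ failing to be open.
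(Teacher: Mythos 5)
Your proof is correct and follows essentially the same route as the paper's: the clopen case rests on $\Xi(M)$ being open in $\Xi$, and the invariant case on the fact that invariance makes $\Xi(M)$ a full preimage, $\Xi(M)=\d^{-1}(M)=\r^{-1}(M)$, so that images of relatively open sets are traces of open images. The paper merely performs the element-wise verification $\d\big(A\cap\Xi(M)\big)=\d(A)\cap\Xi(M)$ directly rather than isolating your general ``restriction of an open map to a saturated preimage'' lemma, and it checks condition (i) where you check (ii) and (iv) --- differences of packaging, not of substance.
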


\begin{proof}
The clopen case is simple. If $M$ is open, then $\Xi(M)$ is also open, so its open subsets are precisely those which are open in $\Xi$ and contained in $\Xi(M)$\,. Then clearly $\d_M\!:\Xi(M)\to\Xi(M)$ is an open map.

\smallskip
We treat the invariant case. If one shows that 
$$
\d\big(A\cap\Xi(M)\big)=\d(A)\cap\Xi(M)\,,\quad\forall\,A\subset\Xi\ {\rm open},
$$ 
then indeed $d_M\equiv\d\big\vert_{\Xi(M)}\!:\Xi(M)\to\Xi(M)$ is open. But
$$
x\in\d\big(A\cap\Xi(M)\big)\ \Leftrightarrow\ \exists\,\xi\in A\ \,{\rm with}\ \,x=\d(\xi)\in M,\ \r(\xi)\in M,
$$
while
$$
x\in\d(A)\cap\Xi(M)\ \Leftrightarrow\ x\in\d(A)\cap M\ \Leftrightarrow\ \exists\,\xi\in A\ \,{\rm with}\ \,x=\d(\xi)\in M.
$$
A priori $\d\big(A\cap\Xi(M)\big)\subset\d(A)\cap\Xi(M)$\,. The two conditions are identical in the invariant case, in which $\d(\xi)\in M\,\Rightarrow\,\r(\xi)\in M$.
\end{proof}

\begin{Remark}\label{onrestrictions}
{\rm We go on assuming that $\Xi$ is manageable. Some comments on restrictions are needed. Generally, they are first defined at the $C_{\rm c}$-level and then are extended at the level of $C^*$-algebras. So, if $M\subset X$ is closed, one sets 
\begin{equation}\label{zdrictiile}
\rho_M:C_{\rm c}(\Xi)\to C_{\rm c}\big(\Xi(M)\big),\quad \rho_M(f):=f|_{\Xi(M)}\,.
\end{equation}
Assume first that $M$ is invariant. Then $\rho_M$ is a $^*$-morphism and it is known that it extends to a $C^*$-morphism
\begin{equation*}\label{sdrictiile}
\rho_M:{\rm C}^*[\Xi]\to{\rm C}^*\big[\Xi(M)\big].
\end{equation*}
{\it If $M$ is not invariant \eqref{zdrictiile} is still linear and involutive, but it is no longer a morphism.} Note the formula
\begin{equation}\label{banati}
\rho_M(f\star g)-\rho_M(f)\star_M\rho_M(g)=\varsigma_M(f,g)\,,
\end{equation}
where for every $\xi\in\Xi(M)$
\begin{equation*}\label{karenc}
\big[\varsigma_M(f,g)\big](\xi):=\!\!\underset{\eta\zeta=\xi,\,\d(\eta)\notin M}{\sum}\!f(\eta)g(\zeta)\,.
\end{equation*}
(If $M$ is invariant, then $\r(\eta)=\r(\xi)\in M$ implies $\d(\eta)\in M$ and thus $\varsigma_M(f,g)=0$\,.)
However, it is shown in \cite[Lemma 5.6.(i)]{Ma} that \eqref{zdrictiile} extends to a contraction $\rho_M:{\rm C}^*[\Xi]\to{\rm C}^*\big[\Xi(M)\big]$. It is even simpler to check that $\rho_M:{\rm Hahn}(\Xi)\to{\rm Hahn}\big[\Xi(M)\big]$ is a well-defined contraction. All these are true in general, but in our manageable case, as mentioned before, elements of $C^*[\Xi]$ may be seen as continuous functions $f:\Xi\to\mathbb C$ that decay at infinity, and $\rho_M$ has the usual sense of restriction of functions on the entire groupoid $C^*$-algebra.}
\end{Remark}

\section{Framework and statement of the intrinsic result}\label{troscanit}

On the set $\Cl(X)$ of all closed subsets of the compact space $X$ one considers {\it the Fell topology}, defined as follows. For any compact subset $K\subset X$ and for any finite family $\mathfrak O$ of open sets of $X$, we define
\begin{equation*}\label{vietoris}
\U(K;\mathfrak O):=\big\{A\in\Cl(X)\mid A\cap K =\emptyset\ \hbox{and} \ A\cap \Ouv \neq \emptyset \ \hbox{for any }\ \Ouv\in \mathfrak O\big\}\,.
\end{equation*}
The family of all such sets $\U(K;\mathfrak O)$ defines a basis for the topology on $\Cl(X)$\,. It is known that $\Cl(X)$ endowed with this topology becomes a compact (and Hausdorff) space \cite[Thm.~1]{Fe}. 

\smallskip
There is a simple description of convergence in this topology \cite[Lemma\,H.2]{Wi}: 

\begin{Lemma}\label{indiciaza}
The net $(A_i)_{i\in I}$ converges to $A$ in the Fell topology on $\Cl(X)$ if and only if 
(a) for every $A_i\ni\gamma_i\to\gamma\in X$ one has $\gamma\in A$ (outer convergence) and 
(b) if $\,\gamma\in A$\,, there is a subnet $\big(A_{i_j}\big)_{j\in J}$ and elements $\gamma_j\in A_{i_j}$ such that $\gamma_j\to\gamma$ (inner convergence).
\end{Lemma}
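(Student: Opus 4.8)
The plan is to unwind the definition of the Fell topology through its subbasis and then match the two resulting conditions with outer, respectively inner, convergence. The basic sets $\U(K;\mathfrak O)$ are finite intersections of the subbasic families $\{A\in\Cl(X)\mid A\cap K=\emptyset\}$, with $K\subset X$ compact, and $\{A\in\Cl(X)\mid A\cap\Ouv\neq\emptyset\}$, with $\Ouv\subset X$ open. Since a net is eventually in a finite intersection iff it is eventually in each factor, the convergence $A_i\to A$ holds if and only if (I) for every compact $K$ with $A\cap K=\emptyset$ one has $A_i\cap K=\emptyset$ eventually, and (II) for every open $\Ouv$ meeting $A$ one has $A_i\cap\Ouv\neq\emptyset$ eventually. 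I would then prove the Lemma by identifying (I) with the outer condition (a) and (II) with the inner condition (b), the latter being read as its lower-limit content: every neighbourhood of a point of $A$ is eventually met by the $A_i$.

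For the implication convergence $\Rightarrow$ (a),(b) I would argue as follows. For (a), suppose a (sub)net $\gamma_i\in A_i$ satisfies $\gamma_i\to\gamma$ while $\gamma\notin A$. Since $X$ is compact Hausdorff, hence regular, and $A$ is closed, there is an open $U\ni\gamma$ with $\overline U\cap A=\emptyset$; as $\overline U$ is compact and disjoint from $A$, condition (I) forces $A_i\cap\overline U=\emptyset$ eventually, contradicting $\gamma_i\in A_i$ together with $\gamma_i\to\gamma\in U\subset\overline U$. For (b), fix $\gamma\in A$; each open neighbourhood $V$ of $\gamma$ meets $A$, so (II) gives $A_i\cap V\neq\emptyset$ eventually. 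Ordering the neighbourhood filter $\mathcal N(\gamma)$ by reverse inclusion and working over the product directed set $I\times\mathcal N(\gamma)$, I would select a point of $A_i\cap V$ for suitable $i$ and assemble a subnet $(A_{i_j})$ with chosen elements $\gamma_j\to\gamma$.

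For the converse (a),(b) $\Rightarrow$ convergence I would verify (I) and (II) directly. Given a compact $K$ with $A\cap K=\emptyset$, if $A_i\cap K\neq\emptyset$ held frequently, I would pick $\gamma_i\in A_i\cap K$ along this cofinal collection and use compactness of $K$ to pass to a subnet $\gamma_{i_j}\to\gamma\in K$; outer convergence (a), applied to this subnet, would then give $\gamma\in A$, contradicting $A\cap K=\emptyset$, so (I) holds. Given an open $\Ouv$ with $\gamma\in A\cap\Ouv$, inner convergence (b), in the form that $A_i$ meets every neighbourhood of $\gamma$ for all large $i$, provides points of $A_i$ eventually lying in $\Ouv$, whence $A_i\cap\Ouv\neq\emptyset$ eventually, which is (II).

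The main obstacle is the bookkeeping hidden in these ``eventually/frequently'' statements: the subnet construction for (b), which extracts a genuine subnet out of the family of eventual meetings via the product directed set $I\times\mathcal N(\gamma)$, and, in the converse, the careful passage from frequent occurrence to a convergent subnet when compactness of $K$ is invoked. The only substantive inputs are purely topological — regularity of the compact Hausdorff space $X$, used to separate the point $\gamma$ from the closed set $A$, and compactness, used to produce cluster points of selections inside a prescribed compact set — so no groupoid structure enters the argument.
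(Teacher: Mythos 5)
Your overall strategy --- unwinding the Fell topology through its subbasis into conditions (I) (eventually missing every compact set disjoint from $A$) and (II) (eventually hitting every open set that meets $A$), then converting (I) into the outer condition by regularity plus compactness and (II) into the inner condition --- is the standard argument. The paper itself gives no proof: Lemma \ref{indiciaza} is quoted from Williams' book (Lemma H.2 there), so that standard argument is the right benchmark, and your forward direction (convergence implies (a) and (b)) is complete and correct, including the product directed set $I\times\mathcal N(\gamma)$ used to assemble the subnet.

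The problem is in the converse, and it is not mere bookkeeping. To get (II) you invoke ``(b), in the form that $A_i$ meets every neighbourhood of $\gamma$ for all large $i$''. That form is strictly stronger than what the lemma literally asserts: the existence of \emph{one} subnet $\big(A_{i_j}\big)$ with selections $\gamma_j\to\gamma$ only says that every neighbourhood of $\gamma$ meets $A_i$ \emph{frequently}, not eventually. A dual upgrade occurs in your proof of (I), where you apply (a) to a subnet of selections although the text of (a) quantifies over nets indexed by $I$ itself. With the literal readings the ``if'' direction is actually false: take $X=[0,1]$, $A_n=\{0\}$ for $n$ even, $A_n=\{1\}$ for $n$ odd, and $A=\{0,1\}$. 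Then (a) holds (any convergent selection, along the full net or any subnet, has limit $0$ or $1$, both in $A$), and (b) holds in the subnet form (use the constant subnets over even, respectively odd, indices), yet $A_n\not\to A$ in the Fell topology, because the open set $\Ouv=(1/2,1]$ meets $A$ but meets $A_n$ only for odd $n$, so (II) fails. Hence your opening sentence that (b) is ``read as its lower-limit content: every neighbourhood of a point of $A$ is eventually met by the $A_i$'' is doing real mathematical work: it silently replaces the stated condition by the strictly stronger one for which the equivalence is true (and which is how the cited source phrases it: (a) for subnets, (b) with selections $\gamma_i\in A_i$ defined for all large $i$ and converging to $\gamma$). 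Your argument should present this as a correction of the statement, not as a reading of it; as a proof of the statement exactly as printed, the step upgrading ``frequently'' to ``for all large $i$'' is a genuine gap --- indeed an unfixable one. Note that the paper's own uses of the lemma are unaffected: in Proposition \ref{ciocardel} and Proposition \ref{kokardel} only the (true) ``only if'' direction enters, and in Lemma \ref{pisicher} the strong forms of (a) and (b) are what is actually verified.
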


If $A\subset A_i$ for every $i$\,, (b) always holds. If $A\supset A_i$ for every $i$\,, (a) is automatically satisfied\,.

\begin{Setting}\label{sett} 
Let $\Xi$ be a manageable groupoid with unit space $X$. Let $\,\T$ be a Fell-closed subfamily of $\,\Cl(X)$ formed of tame subsets, containing a closed invariant subset $X_\infty$ which is an accumulation point of $\,\T$ and such that $X_\infty\subset M$ for every $M\in\T_0:=\T\!\setminus\!\{X_\infty\}$\,. 
\end{Setting}

\begin{Example}\label{sifacut}
{\rm A particular situation is to ask $\T_0$ to be a net of clopen sets Fell-converging to $X_\infty$\,, all these sets strictly containing $X_\infty$\,.}
\end{Example}

\begin{Theorem}\label{dorita}
Assume that Setting \ref{sett} holds. For every self-adjoint $h\in C^*[\Xi]$ we set 
\begin{equation*}\label{infinit}
h_\infty\!:=\rho_{X_\infty}\!(h)\in C^*\big[\Xi(X_\infty)\big]\,,
\end{equation*}
\begin{equation*}\label{infinita}
h_M\!:=\rho_M(h)\in C^*\big[\Xi(M)\big]\,,\ \forall\,M\in\T_0\,.
\end{equation*}
One has
\begin{equation}\label{frik}
\inf{\sf sp}\big(h_\infty\big)=\sup_{M\in\T_0}\!\big[\inf {\sf sp}\big(h_M\big)\big]\,,
\end{equation}
\begin{equation}\label{frig}
\sup{\sf sp}\big(h_\infty\big)=\inf_{M\in\T_0}\!\big[\sup {\sf sp}\big(h_M\big)\big]\,.
\end{equation}
\end{Theorem}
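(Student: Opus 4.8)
To prove Theorem \ref{dorita}, the plan is to establish \eqref{frik} for every self-adjoint $h$ and then deduce \eqref{frig} by symmetry: since $\rho_M$ and $\rho_{X_\infty}$ are linear, $(-h)_M=-h_M$ and $(-h)_\infty=-h_\infty$, and $\sup{\sf sp}(a)=-\inf{\sf sp}(-a)$ for self-adjoint $a$, so \eqref{frig} is just \eqref{frik} applied to $-h$. For the \emph{easy inequality} $\sup_{M\in\T_0}\inf{\sf sp}(h_M)\le\inf{\sf sp}(h_\infty)$, fix $M\in\T_0$. Tameness of $M$ (Lemma \ref{teim}) makes $\Xi(M)$ a manageable groupoid, and because $X_\infty$ is invariant in $X$ with $X_\infty\subset M$, it is invariant inside $\Xi(M)$ and $\Xi(M)(X_\infty)=\Xi(X_\infty)$. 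Hence the invariant case of Remark \ref{onrestrictions}, applied to $\Xi(M)$, makes $\rho^M_{X_\infty}:C^*[\Xi(M)]\to C^*[\Xi(X_\infty)]$ a $C^*$-morphism; it is unital, sending $\mathbf 1_M$ to $\mathbf 1_{X_\infty}$. Transitivity of function restriction gives $\rho_{X_\infty}=\rho^M_{X_\infty}\circ\rho_M$ on $C_{\rm c}(\Xi)$, hence on $C^*[\Xi]$ by density, so $h_\infty=\rho^M_{X_\infty}(h_M)$. A unital $^*$-morphism cannot enlarge the spectrum, so ${\sf sp}(h_\infty)\subseteq{\sf sp}(h_M)$, whence $\inf{\sf sp}(h_M)\le\inf{\sf sp}(h_\infty)$; taking the supremum over $M\in\T_0$ proves the claim.

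For the \emph{hard inequality} $\inf{\sf sp}(h_\infty)\le\sup_{M\in\T_0}\inf{\sf sp}(h_M)$, I would invoke the continuous field of $C^*$-algebras over $(\T,\text{Fell})$ constructed in Section \ref{danganit}, with fibre $C^*[\Xi(M)]$ over $M$, for which a fixed $h\in C^*[\Xi]$ determines the section $M\mapsto h_M=\rho_M(h)$ whose value at the accumulation point $X_\infty$ is $h_\infty$. The only feature needed is that this field is upper semicontinuous and that $M\mapsto h_M$ is a continuous section. Then continuous functional calculus (which preserves continuous sections in an upper semicontinuous field) applied with $g_\lambda(x):=(\lambda-x)_+$ shows that $M\mapsto\|g_\lambda(h_M)\|=(\lambda-\inf{\sf sp}(h_M))_+$ is upper semicontinuous. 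Since $\|h_M\|=\|\rho_M(h)\|\le\|h\|$, choosing $\lambda>\|h\|$ removes the positive part, so $M\mapsto\inf{\sf sp}(h_M)$ is lower semicontinuous on $\T$. Because $X_\infty$ is an accumulation point of $\T$ (Setting \ref{sett}), pick a net $\T_0\ni M_i\to X_\infty$ in the Fell topology (Lemma \ref{indiciaza}); lower semicontinuity at $X_\infty$ gives $\sup_{M\in\T_0}\inf{\sf sp}(h_M)\ge\liminf_i\inf{\sf sp}(h_{M_i})\ge\inf{\sf sp}(h_\infty)$. Together with the easy inequality this yields \eqref{frik}.

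The main obstacle is not the spectral bookkeeping above but the construction underlying it: producing the upper semicontinuous field and verifying that $M\mapsto\rho_M(h)$ is genuinely a continuous section with the correct limit $h_\infty$ at $X_\infty$. This is delicate precisely because the members of $\T_0$ are \emph{not} invariant, so by \eqref{banati} the restriction $\rho_M$ is non-multiplicative, with defect $\varsigma_M$; thus $h_M$ is not the image of $h$ under any morphism and standard field arguments do not apply directly. What must be exploited is that $\varsigma_M(f,g)$ is supported on composable pairs whose intermediate range lies outside $M$, so that for $f,g\in C_{\rm c}(\Xi)$ this defect is carried to the boundary and vanishes as $M\to X_\infty$; combined with the tautological-groupoid construction of Section \ref{danganit} and with tameness of each $M\in\T$ (Lemma \ref{daca}, Lemma \ref{teim}), which guarantees that every fibre $C^*[\Xi(M)]$ is a bona fide étale groupoid $C^*$-algebra, this forces the continuity of the section. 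Establishing that continuity is the real content of the proof.
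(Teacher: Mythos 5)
Your first two paragraphs are correct and, despite the functional-calculus packaging, they follow essentially the same route as the paper: the easy inequality is exactly the paper's step (i) (the epimorphisms $\rho_{M\infty}$ satisfying \eqref{legatura}, spectra only shrinking under unital morphisms), and the hard inequality rests on the upper semicontinuity of Proposition \ref{away} plus a positivity trick converting a spectral edge into a norm. In fact your functional calculus is secretly affine: since you take $\lambda>\|h\|\ge\|h_M\|$, one has $g_\lambda(h_M)=\lambda\mathbf 1_M-h_M=\rho_M(\lambda\mathbf 1-h)$, because $\rho_M$ is linear and unital. So you never need the principle that ``functional calculus preserves continuous sections'': it suffices to apply Proposition \ref{away}(i) to the single positive element $\lambda\mathbf 1-h$ and to use $\|\rho_M(\lambda\mathbf 1-h)\|=\sup{\sf sp}\big(\lambda\mathbf 1_M-h_M\big)=\lambda-\inf{\sf sp}(h_M)$. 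This is literally the paper's argument applied to $-h$: the paper shifts $f:=h+c\mathbf 1\ge 0$, uses $\sup{\sf sp}(f_M)=\|f_M\|$ together with \eqref{reallygets} to get \eqref{frig} first, and then obtains \eqref{frik} by the same $h\leftrightarrow -h$ symmetry you invoke in the opposite direction. (If you do insist on the functional-calculus formulation, it is legitimate here only because the fiber evaluations $g\mapsto g_M$ of the field are genuine $C^*$-morphisms; this is the content of Remark \ref{obstacle}, namely that $\{M\}\times M$ is $\Gamma_\T(\Xi)$-invariant even when $M$ is not $\Xi$-invariant. Without that remark, your step would appear to collide with the paper's warning that $\varphi(h)_M\ne\varphi(h_M)$.)

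Your closing paragraph, however, misidentifies where the remaining work lies, and the mechanism you propose there would lead you astray. Nothing requires the multiplicative defect $\varsigma_M(f,g)$ of \eqref{banati} to vanish as $M\to X_\infty$, and no such vanishing is proved, or needed, anywhere in the paper; it is not even clear in what norm it would hold. The continuity input is exactly Proposition \ref{away}, quoted from \cite{LR}, and the reason it applies despite non-invariance of the sets $M$ is again Remark \ref{obstacle}: the family $M\mapsto h_M$ is the fiber family of the element $\iota(h)$ of the section algebra ${\rm C}^*\big[\Gamma_\T(\Xi)\big]$, and the fibers are \emph{invariant} reductions of $\Gamma_\T(\Xi)$, so evaluation at a fiber is a morphism even though $\iota$ and $\rho_M$ are not. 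The non-multiplicativity of $\rho_M$ costs nothing in the proof precisely because the only operations ever applied fiberwise are affine (a shift by a multiple of the unit), and those commute with any linear unital contraction. So the ``real content'' is not a continuity statement still to be established; it is already supplied by Proposition \ref{away} together with the epimorphism factorization \eqref{legatura}, and your first two paragraphs, so corrected, constitute a complete proof.
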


\section{Continuous fields of groupoids and $C^*$-algebras}\label{danganit}

We now introduce a continuous field of groupoids associated to the given manageable groupoid. The construction is inspired by a similar construction provided in \cite{BBdN1}. The main difference is that we consider sets of units $M$ that are not invariant.  Assuming Setting \ref{sett} ($X_\infty$ will not be particularly important here, but $\T$ has to be Fell closed), let us introduce
\begin{itemize}
\item
the set $\,\Gamma\!_\T(\Xi):=\big\{(M,\xi)\mid M\in\T \ \hbox{and}\ \xi \in \Xi(M)\big\}\subset\T\times \Xi$\,,
\item 
the space of units $\,\Gamma\!_\T(\Xi)^{(0)}\!:=\big\{(M,x)\mid M\in \T \ \hbox{and}\ x \in M\big\}\subset\T\times X$,
\item 
two maps $\r_\T\!:=({\rm id}\times\r)\vert_{\Gamma\!_\T(\Xi)}$\,, $\d_\T\!:=({\rm id}\times\d)\vert_{\Gamma\!_\T(\Xi)}$ from $\Gamma\!_\T(\Xi)$ to $\Gamma\!_\T(\Xi)^{(0)}$, actually defined by  
$$
\r_\T(M,\xi):=\big(M,\r(\xi)\big)\,,\quad \d_\T(M,\xi):=\big(M,\d(\xi)\big)\,,
$$
\item 
the set $\Gamma\!_\T(\Xi)^{(2)}$ of composable pairs given by
$$
\Gamma\!_\T(\Xi)^{(2)}\!:=\big\{\big((M,\xi),(M,\eta)\big)\mid M\in \T\,,\ \xi,\eta\in \Xi(M)\ \hbox{and}\ \r(\eta)=\d(\xi)\big\}\subset \Gamma\!_\T(\Xi)\times\Gamma\!_\T(\Xi)\,,
$$ 
with composition $(M,\xi)(M,\eta):=(M,\xi\eta)$\,,
\item 
the inverse map given by $(M,\xi)^{-1}\!:=(M,\xi^{-1})$\,.
\end{itemize}

\begin{Proposition}\label{ciocardel}
The above defined $\Gamma\!_\T(\Xi)$ is a Hausdorff locally compact $\si$-compact groupoid having a right Haar system.
\end{Proposition}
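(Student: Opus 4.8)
The plan is to verify the groupoid axioms and then, separately, the topological requirements (Hausdorff, locally compact, $\sigma$-compact) and the existence of a right Haar system. The algebraic axioms are essentially inherited fiberwise from $\Xi$: for a fixed $M\in\T$, the fiber $\{M\}\times\Xi(M)$ is exactly the restricted groupoid $\Xi(M)$, and composition, inversion, and the unit structure on $\Gamma\!_\T(\Xi)$ are defined to act within each fiber via the corresponding operations on $\Xi(M)$. So associativity, the inverse identities, and compatibility of $\r_\T,\d_\T$ with composition all reduce to the already-known groupoid structure of each $\Xi(M)$. This step is routine and I would dispatch it quickly.

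The topological content is the real work, and it splits into two claims. First, $\Gamma\!_\T(\Xi)$ is Hausdorff, locally compact and $\sigma$-compact. Since $\Gamma\!_\T(\Xi)\subset\T\times\Xi$ carries the subspace topology, and $\T\times\Xi$ is Hausdorff (both factors are), Hausdorffness is immediate. For local compactness and $\sigma$-compactness I would argue that $\Gamma\!_\T(\Xi)$ is a \emph{closed} subset of $\T\times\Xi$: here one uses that $\T$ is Fell-closed together with the characterization of Fell convergence in Lemma \ref{indiciaza}. Concretely, if $(M_i,\xi_i)\to(M,\xi)$ in $\T\times\Xi$ with each $(M_i,\xi_i)\in\Gamma\!_\T(\Xi)$, then $M\in\T$ since $\T$ is Fell-closed, and one must check $\xi\in\Xi(M)$, i.e. $\r(\xi),\d(\xi)\in M$. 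But $\r(\xi_i)\in M_i$ and $\r(\xi_i)\to\r(\xi)$ by continuity of $\r$, so outer convergence (part (a) of Lemma \ref{indiciaza}) forces $\r(\xi)\in M$; similarly for $\d$. Thus $\Gamma\!_\T(\Xi)$ is closed. Since $\T$ is compact (a Fell-closed subset of the compact space $\Cl(X)$) and $\Xi$ is locally compact and $\sigma$-compact, the product $\T\times\Xi$ is locally compact and $\sigma$-compact, and these properties pass to the closed subset $\Gamma\!_\T(\Xi)$.

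The main obstacle is the existence of a \emph{continuous} right Haar system. Fiberwise the candidate is clear: on $\Gamma\!_\T(\Xi)_{(M,x)}=\{M\}\times\Xi(M)_x$ one takes the counting measure $\lambda^M_x$ furnished by tameness of $M$ via Lemma \ref{teim}(v). The delicate point is continuity of the assignment $(M,x)\mapsto\lambda^M_x$ as $M$ itself varies in the Fell topology, not merely as $x$ varies within a fixed $M$. I would test this against $\varphi\in C_{\rm c}\big(\Gamma\!_\T(\Xi)\big)$ by showing that
\begin{equation*}
(M,x)\longmapsto \sum_{\xi\in\Xi(M)_x}\!\varphi(M,\xi)
\end{equation*}
is continuous on $\Gamma\!_\T(\Xi)^{(0)}$. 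Since $\Xi$ is \'etale, locally near any point the relevant $\xi$ are separated and $\varphi$ is continuous, so the sum is locally finite; the issue is that as $M_i\to M$, points of $\Xi(M_i)_{x_i}$ could appear or disappear relative to $\Xi(M)_x$. This is precisely where tameness must be used: openness of $\r_M$ (equivalently $\d_M$) controls exactly this phenomenon, preventing the loss or spurious gain of summands in the limit. I expect the argument to combine the inner/outer convergence of Lemma \ref{indiciaza} applied to the units $M_i$ with a local-section argument coming from the \'etale structure of $\Xi$, bookkeeping each term of the compactly supported sum individually. Once this continuity is established, the right-invariance of $\lambda^M_x$ holds fiber-by-fiber since it already holds in each $\Xi(M)$, completing the verification.
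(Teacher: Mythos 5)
Your verification of the groupoid axioms, of Hausdorffness, and of the closedness of $\Gamma_\T(\Xi)$ in $\T\times\Xi$ (via Fell-closedness of $\T$ and the outer-convergence part of Lemma \ref{indiciaza}), with local compactness and $\sigma$-compactness inherited by a closed subset of the product, coincides exactly with the paper's own argument. The divergence is in the last third. The paper disposes of the Haar system in one sentence: the measures $\delta_M\times\lambda^M_x$ ``clearly'' form a right Haar system. You, rightly, single out the continuity of $(M,x)\mapsto\lambda^M_x$ \emph{as $M$ varies in the Fell topology} as the real content of the proposition --- but then you never prove it: your final paragraph is a plan (``I expect the argument to combine\dots''), not an argument. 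So, as written, your proposal is incomplete precisely at the step you yourself identify as the main obstacle.

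Moreover, the plan cannot be completed, because the asserted continuity is false at this level of generality. Tameness is a property of each individual $M$: it gives continuity of $x\mapsto\lambda^M_x$ with $M$ \emph{fixed}, while the inner part of Fell convergence only supplies \emph{some} points of $M_i$ near a given point of $M$, not the specific points $\r(\xi_i)$ produced by an \'etale local section; nothing prevents summands from disappearing in the limit. Concretely, take $X=\Z\cup\{\infty\}$ (one-point compactification), $\Z$ acting by translation and fixing $\infty$, $\Xi:=X\rtimes\Z$ (manageable), $X_\infty:=\{\infty\}$, $M_i:=\{n\in\Z\mid |n|\ge i\}\cup\{\infty\}$ (clopen, hence tame by Lemma \ref{daca}), and $\T:=\{M_i\mid i\in\N\}\cup\{X_\infty\}$: Setting \ref{sett}, and even Example \ref{sifacut}, hold. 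The units $(M_i,i)$ converge to $(X_\infty,\infty)$ in $\Gamma_\T(\Xi)^{(0)}$, yet the $\d_\T$-fibre over $(M_i,i)$ contains no arrow with group coordinate $-1$, because $\r(i,-1)=i-1\notin M_i$, whereas the fibre over $(X_\infty,\infty)$ contains $(X_\infty,(\infty,-1))$. Testing against the characteristic function of the compact open set $\Gamma_\T(\Xi)\cap(\T\times X\times\{-1\})$, the fibre integrals equal $0$ along the net and $1$ at the limit, so the counting measures violate the continuity axiom of a Haar system. Worse, since $\Gamma_\T(\Xi)$ is $\r$-discrete (its unit space is clopen, $X$ being clopen in $\Xi$), the existence of \emph{any} Haar system would force $\d_\T$ to be open --- the very fact the paper invokes in Proposition \ref{kokardel} --- and the same computation shows $\d_\T$ is not open at $(X_\infty,(\infty,-1))$. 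So the disappearing-summand phenomenon you worried about is genuine and cannot be fixed by bookkeeping: the statement requires hypotheses tying the family $\T$ to the groupoid structure that go beyond tameness of its members, and the paper's own ``clearly'' glosses over exactly this point.
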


\begin{proof}
We may interpret $\T$ as a (trivial) compact groupoid, reduced to units, with the only allowed compositions $MM\!:=M$ for $M\in\T$. The product $\T\times\Xi$ is naturally a locally compact Hausdorff groupoid, and its unit space is $\T\times X$. 
We start by showing that $\Gamma\!_\T(\Xi)$ is a closed subgroupoid of this product. The topological properties are then direct consequences.

\smallskip
Since each $\Xi(M)$ is a subgroupoid of $\Xi$\,, it is trivial to check that $\Gamma\!_\T(\Xi)$ is stable under compositions and inversions. 
In fact, it may also be seen as the disjoint union groupoid 
$$
\Gamma\!_\T(\Xi)\equiv\bigsqcup_{M\in\T}\!\Xi(M)\,.
$$
Let us now show that $\Gamma\!_\T(\Xi)$ is a closed subset of $\T\times \Xi$\,. For that purpose, consider a net $(M_i,\xi_i)$ in $\Gamma\!_\T(\Xi)$ converging to $(M,\xi)$ in the product space $\T\times \Xi$\,; one has to show that $(M,\xi)\in\Gamma\!_\T(\Xi)$\,, i.\,e.\; that $\d(\xi),\r(\xi)\in M$. By continuity, $\xi_i\to\xi$ implies that $M_i\ni\d(\xi_i)\to\d(\xi)$ and $M_i\ni\r(\xi_i)\to\r(\xi)$ in $X$ and then applying the point (a) of Lemma \ref{indiciaza} finishes the proof.

\smallskip
On each $\d$-fibre $\Xi_x\!:=\{\xi\in\Xi\!\mid\!\d(\xi)=x\}$ (discrete in the relative topology) we consider the counting measure $\lambda_x$\,. By tameness of $M\in\T$, on $\Xi(M)_x\!=\Xi^M_x$ one has the counting measure denoted by $\lambda^M_x$, getting a right Haar system on the groupoid $\Xi(M)$\,. Let us denote by $\delta_M$ the Dirac measure at $M\in\T$\,, and consider the product measure $\delta_M\times \lambda^M_x$ on the fibre $\Gamma\!_\T(\Xi)_{(M,x)}=\{M\}\times\Xi(M)_x$\,. Then clearly 
$$
\big\{\delta_M\times \lambda^M_x\,\big\vert\,M \in\T\,, x \in M\big\}
$$ 
defines a right Haar system on  $\Gamma\!_\T(\Xi)$\,.
\end{proof}

\begin{Remark}
{\rm In \cite{BBdN1}, the main role is played by 
$$
\T\equiv\Cl_{\rm inv}(X):=\{M\in\Cl(X)\mid M\ {\rm is\ invariant}\}\,.
$$ 
The resulting groupoid $\Gamma(\Xi)\equiv\Gamma_{\rm inv}(\Xi)$ is called \emph{the tautological groupoid of $\,\Xi$}\,. We will mostly be interested in other choices.}
\end{Remark}

Let us recall that {\it a continuous field of groupoids} is given by a continuous open surjection $p:\Si\to\T$, where $\Si$ is a locally compact groupoid over a unit space $\Si^{(0)}$ and $\T$ a topological space (say compact) such that
\begin{equation*}\label{constrictie}
p|_{\Si^{(0)}}\!\circ \d=p=p|_{\Si^{(0)}}\!\circ \r\,.
\end{equation*}
Then clearly, for every $t\in\T$, the fiber $\Si_t:=p^{-1}(\{t\})$ is a closed subgroupoid and $\Si=\sqcup_{t\in\T}\Si_t$\,. The following criterion will be needed below:

\begin{Lemma}\label{ciciri}
\cite[Prop.1.15]{Wi} A surjection $\varphi:A\to B$ between two topological spaces is open if and only if for every $a\in A$ and every convergent net $B\ni b_i\to\varphi(a)$ there exist a subnet $\big(b_{i_k}\big)_k\!\subset B$ and a net $(a_k)_k\subset A$ with $\varphi(a_k)=b_{i_k}$ for every $k$ and $a_k\to a$\,.
\end{Lemma}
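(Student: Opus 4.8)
The statement is a standard net-theoretic criterion for openness of a map, so the plan is to prove both implications straight from the definition of an open map, using nets to detect the failure of openness.

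For the forward implication, I assume $\varphi$ is open, fix $a\in A$, and take a net $b_i\to\varphi(a)$ indexed by a directed set $\Lambda$. The idea is to build a single new directed set encoding simultaneously how far along the net one has travelled and how close to $a$ one insists on staying. Writing $\mathcal U$ for the collection of open neighborhoods of $a$ ordered by reverse inclusion, I set
$$
K:=\{(i,U)\in\Lambda\times\mathcal U\mid b_i\in\varphi(U)\}
$$
with the product order. First I would check that $K$ is directed: given $(i,U),(j,V)\in K$, openness of $\varphi$ makes $\varphi(U\cap V)$ an open neighborhood of $\varphi(a)$, so $b_k\in\varphi(U\cap V)$ for all large $k$; choosing $k$ also above $i$ and $j$ gives an upper bound $(k,U\cap V)$. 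Since $\varphi$ is surjective, $(i,A)\in K$ for every $i$, so the projection $K\to\Lambda$, $(i,U)\mapsto i$, is monotone and cofinal, and $(b_i)_{(i,U)\in K}$ is a genuine subnet of $(b_i)_i$. For each $(i,U)\in K$ the relation $b_i\in\varphi(U)$ lets me pick (axiom of choice) a point $a_{(i,U)}\in U$ with $\varphi(a_{(i,U)})=b_i$. Convergence $a_{(i,U)}\to a$ is then immediate: for an open neighborhood $U_0$ of $a$, openness of $\varphi(U_0)$ together with $b_i\to\varphi(a)$ yields some $(i_0,U_0)\in K$, and for $(i,U)\ge(i_0,U_0)$ one has $U\subseteq U_0$, hence $a_{(i,U)}\in U\subseteq U_0$.

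For the reverse implication I would argue by contradiction. If $\varphi$ is not open, there exist an open set $O\subseteq A$ and a point $b\in\varphi(O)$ that is not interior to $\varphi(O)$; then $b$ lies in the closure of $B\setminus\varphi(O)$, so some net $b_i\to b$ has $b_i\notin\varphi(O)$ for all $i$. Choosing $a\in O$ with $\varphi(a)=b$ and applying the hypothesis produces a subnet $b_{i_k}$ and a net $a_k\to a$ with $\varphi(a_k)=b_{i_k}$. Since $O$ is open and $a_k\to a\in O$, eventually $a_k\in O$, forcing $b_{i_k}=\varphi(a_k)\in\varphi(O)$, contradicting $b_{i_k}\notin\varphi(O)$. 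Hence $\varphi(O)$ is open.

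The only delicate point is the forward direction: the main obstacle is organizing the double limit, along the net and along the neighborhood filter of $a$, into one directed set so that the extracted net $(a_k)$ is at once a lift of a subnet of $(b_i)$ and convergent to $a$. Openness of $\varphi$ enters precisely in verifying that $K$ is directed and that every neighborhood of $a$ is eventually reached; everything else is a routine verification. Since this is exactly the cited result \cite[Prop.1.15]{Wi}, one may alternatively just refer to that source.
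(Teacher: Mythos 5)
Your proof is correct in both directions: the directed set $K=\{(i,U)\in\Lambda\times\mathcal U\mid b_i\in\varphi(U)\}$ with its cofinal projection onto $\Lambda$ (directedness resting on the openness of $\varphi(U\cap V)$, cofinality on surjectivity) settles the forward implication, and the contradiction via a net in $B\setminus\varphi(O)$ converging to a non-interior point of $\varphi(O)$ settles the converse. The paper gives no proof of this lemma at all—it simply cites \cite[Prop.~1.15]{Wi}—and your argument is exactly the standard one behind that citation, so nothing further needs to be reconciled.
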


In our case we introduce the additional map
\begin{equation*}
p:\Gamma\!_\T(\Xi)\to\T, \quad p(M,\xi)\!:=M.
\end{equation*}
The next result is slightly more general than \cite[Prop.~4.3.7]{Bec}.

\begin{Proposition}\label{kokardel}
The triple $\big(\Gamma\!_\T(\Xi),\T,p\big)$ defines a continuous field of groupoids.
\end{Proposition}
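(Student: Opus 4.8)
The plan is to verify the three defining requirements of a continuous field of groupoids: that $\Gamma\!_\T(\Xi)$ is a locally compact groupoid, that the compatibility relation $p|_{\Gamma\!_\T(\Xi)^{(0)}}\!\circ\dT=p=p|_{\Gamma\!_\T(\Xi)^{(0)}}\!\circ\rT$ holds, and that $p$ is a continuous open surjection. The groupoid structure is already supplied by Proposition \ref{ciocardel}, and the first relation is immediate: since $\dT(M,\xi)=\big(M,\d(\xi)\big)$ and $\rT(M,\xi)=\big(M,\r(\xi)\big)$ both have first coordinate $M$, applying $p|_{\Gamma\!_\T(\Xi)^{(0)}}$ returns $M=p(M,\xi)$. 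Surjectivity holds because every $M\in\T$ contains the nonempty invariant set $X_\infty$ (empty sets are Fell-isolated in the compact space $X$, so an accumulation point cannot be empty), hence a unit $(M,x)$ with $p(M,x)=M$; and $p$ is the restriction to $\Gamma\!_\T(\Xi)$ of the continuous projection $\T\times\Xi\to\T$, so it is continuous.

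The whole content therefore lies in the openness of $p$, which I would establish through the net criterion of Lemma \ref{ciciri}. Fix $(M,\xi)\in\Gamma\!_\T(\Xi)$ and a net $\T\ni M_i\to M=p(M,\xi)$; I must produce a subnet $(M_{i_k})_k$ and elements $\xi_k\in\Xi(M_{i_k})$ with $\xi_k\to\xi$ in $\Xi$, so that $(M_{i_k},\xi_k)\to(M,\xi)$. Since $\Xi$ is \'etale, I choose an open bisection $U\ni\xi$ on which $\r$ and $\d$ restrict to homeomorphisms onto open sets $\r(U),\d(U)\subset X$, and let $\theta:=\d|_U\circ(\r|_U)^{-1}:\r(U)\to\d(U)$ be the associated partial homeomorphism, so that $\theta(\r(\xi))=\d(\xi)$. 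By inner convergence (Lemma \ref{indiciaza}(b)) applied to $\r(\xi)\in M$, after passing to a subnet there are $x_k\in M_{i_k}$ with $x_k\to\r(\xi)$; for $k$ large $x_k\in\r(U)$, so $\xi_k:=(\r|_U)^{-1}(x_k)\in U$ satisfies $\r(\xi_k)=x_k\in M_{i_k}$ and $\xi_k\to\xi$.

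The main obstacle is that $\xi_k$ need not lie in $\Xi(M_{i_k})$: one must still force the other endpoint $\d(\xi_k)=\theta(x_k)$ into $M_{i_k}$, whereas Fell convergence controls each endpoint only separately. This is precisely where non-invariance bites, since for invariant $M_{i_k}$ the condition $\r(\xi_k)\in M_{i_k}$ already forces $\d(\xi_k)\in M_{i_k}$, which is the situation of \cite[Prop.~4.3.7]{Bec}. I would resolve it using tameness: by Lemma \ref{teim} each $\r|_{\Xi(M_{i_k})}$ is open, so $\r\big(U\cap\Xi(M_{i_k})\big)=\big\{w\in\r(U)\cap M_{i_k}\mid\theta(w)\in M_{i_k}\big\}$ is relatively open in $M_{i_k}$, and the remaining task is to select the approximating sources $x_k$ inside these ``good'' sets rather than arbitrarily in $M_{i_k}$. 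In the case that matters most, $M=X_\infty$, the difficulty disappears entirely: the nesting hypothesis $X_\infty\subset M_i$ of Setting \ref{sett} gives $\Xi(X_\infty)\subset\Xi(M_i)$, so $\xi\in\Xi(X_\infty)$ already lies in every $\Xi(M_i)$ and one may simply take $\xi_k=\xi$. Once openness is secured at every point of $\T$, the triple $\big(\Gamma\!_\T(\Xi),\T,p\big)$ meets all the axioms and is a continuous field of groupoids.
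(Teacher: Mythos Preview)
Your proof has a genuine gap in the openness argument. You correctly identify the obstacle---forcing \emph{both} endpoints $\r(\xi_k)$ and $\d(\xi_k)$ into $M_{i_k}$ simultaneously---but you do not resolve it. Tameness of $M_{i_k}$ does make $V_{i_k}:=\r\big(U\cap\Xi(M_{i_k})\big)$ relatively open in $M_{i_k}$, yet inner Fell convergence only delivers points $x_k\in M_{i_k}$ near $\r(\xi)$, with no guarantee that they fall into $V_{i_k}$. Symmetrically, approximating $\d(\xi)$ from within $M_{i_k}$ gives points whose $\theta^{-1}$-images need not lie in $M_{i_k}$. Your sentence ``the remaining task is to select the approximating sources $x_k$ inside these `good' sets'' names the difficulty rather than overcoming it, and the special case $M=X_\infty$ (where $\xi_k=\xi$ works trivially) does not cover the other accumulation points of $\T$.

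The paper sidesteps this two-endpoint problem entirely by a factorization you overlooked. Since Proposition~\ref{ciocardel} equips $\Gamma_\T(\Xi)$ with a Haar system (this is where tameness of every $M\in\T$ is actually spent), the source map $\d_\T$ is automatically open. Writing $p=p^{(0)}\!\circ\d_\T$, it then suffices to show that the restriction $p^{(0)}:\Gamma_\T(\Xi)^{(0)}\to\T$ is open. Applying Lemma~\ref{ciciri} to $p^{(0)}$ requires only: given $x\in M$ and $M_i\to M$, find a subnet and $x_j\in M_{i_j}$ with $x_j\to x$. That is precisely inner convergence (Lemma~\ref{indiciaza}(b))---a single endpoint, no bisection, no second constraint. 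The moral is that tameness enters not through the local openness of each $\r|_{\Xi(M_i)}$, but globally, by furnishing the Haar system that makes $\d_\T$ open and reduces the problem to the unit space.
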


\begin{proof}
Let $p^{(0)}$ be the restriction of $p$ to $\Gamma\!_\T(\Xi)^{(0)}$. The equalities 
\begin{equation}\label{shotii}
p^{(0)}\!\circ \r_\T\!=p=p^{(0)}\!\circ\d_\T
\end{equation} 
clearly hold, and  the map $p$ is also trivially surjective. Since $p$ is the restriction to the subset $\Gamma\!_\T(\Xi)$ of the continuous first projection ${\rm pr}_1:\T\times\Xi\to\T$, it is continuous. 

\smallskip
So we are left with showing that $p$ is open. The composition of two open maps is open, and $\d_\T,\r_\T$ are known to be open, since $\Gamma\!_\T(\Xi)$ has a Haar system. Thus, by \eqref{shotii}, we only need to prove that $p^{(0)}\!:\Gamma\!_\T(\Xi)^{(0)}\to\Cl(X)$ is open.
Applying Lemma \ref{ciciri} to our case, what has to be checked is that {\it for every $M\in\T$ and $x\in M$, for every convergent net $M_i\overset{\Cl(X)}{\longrightarrow}M=p^{(0)}(M,x)$\,, there exist a subnet $\big(M_{i_j}\big)_{\!k}$ and a net $\big(x_j\in M_{i_j}\big)_{\!j}$ with $x_j\overset{X}{\longrightarrow}x$\,.} 
But this is part of the characterization of the condition $M_i\overset{\Cl(X)}{\longrightarrow}M$, by Lemma \ref{indiciaza}.
\end{proof}

A particularization of \cite[Th.5.5]{LR} gives

\begin{Proposition}\label{away}
For $g\in{\rm C}^*\big[\Gamma_\T(\Xi)\big]$ and $M\in\T$\,, write $g_M:=g\vert_{\Xi(M)}\in C_{\rm c}\big[\Xi(M)\big]$\,. 
\begin{enumerate}
\item[(i)]
The map $\T\ni M\to\,\p\!g_M\!\p_{{\rm C}^*[\Xi(M)]}\,\in\R_+$ is upper semicontinuous.
\item[(ii)]
The map $\T\ni M\to\,\p\!g_M\!\p_{{\rm C}^*_{\rm r}[\Xi(M)]}\,\in\R_+$ is lower semicontinuous.
\end{enumerate}
\end{Proposition}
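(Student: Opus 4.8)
The plan is to obtain both assertions by recognizing the continuous field $\big(\Gamma\!_\T(\Xi),\T,p\big)$ of Proposition \ref{kokardel} as an instance to which the general dichotomy of \cite[Th.~5.5]{LR} applies, and to organize the argument around one structural observation that I would establish first. Namely, in contrast with the non-invariant restriction $\rho_M$ on $\Xi$, which is not multiplicative (see \eqref{banati}), the fibrewise evaluation $g\mapsto g_M=g\vert_{\Xi(M)}$ on the \emph{field} groupoid is a genuine $^*$-homomorphism: since $\Gamma\!_\T(\Xi)=\bigsqcup_{M\in\T}\Xi(M)$ and composable pairs never straddle two fibres, one checks at the $C_{\rm c}$-level that $(g\star g')_M=g_M\star_M g'_M$. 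Consequently $p$ makes both ${\rm C}^*[\Gamma_\T(\Xi)]$ and ${\rm C}^*_{\rm r}[\Gamma_\T(\Xi)]$ into $C(\T)$-algebras, with $\phi\in C(\T)$ acting by pointwise multiplication by $\phi\circ p$.

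For (i) I would invoke the elementary fact that in any $C(\T)$-algebra the fibrewise norm of a fixed element is upper semicontinuous on the base, so it only remains to identify the fibre of ${\rm C}^*[\Gamma_\T(\Xi)]$ at $M$ with the full algebra ${\rm C}^*[\Xi(M)]$ and the image of $g$ with $g_M$. This identification holds because $p^{-1}(M)=\Xi(M)$ is the reduction of $\Gamma\!_\T(\Xi)$ to the closed set of units $\{M\}\times M$, which is invariant in $\Gamma\!_\T(\Xi)$ (orbits never leave a single fibre); and reduction of a \emph{full} groupoid $C^*$-algebra to a closed invariant set of units is a surjective $C^*$-morphism whose kernel is exactly the ideal cut out by $\{\phi\in C(\T)\mid\phi(M)=0\}$. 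Upper semicontinuity of $M\mapsto\p\!g_M\!\p_{{\rm C}^*[\Xi(M)]}$ is then immediate.

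For (ii) the reduced quotient is not exact in this way, so I would argue directly with regular representations. As $\Xi(M)$ is \'etale by tameness (Lemma \ref{teim}), its reduced norm is $\p\!g_M\!\p_{{\rm C}^*_{\rm r}[\Xi(M)]}=\sup_{x\in M}\p\!\lambda^M_x(g_M)\!\p$, with $\lambda^M_x$ the regular representation induced from the unit $x$ on $\ell^2\big(\Xi(M)_x\big)$. Fixing $M$ and $c<\p\!g_M\!\p_{{\rm C}^*_{\rm r}[\Xi(M)]}$, choose $x\in M$ and a finitely supported unit vector $u\in\ell^2(\Xi(M)_x)$ with $\p\!\lambda^M_x(g_M)u\!\p>c$. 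Given any net $M_i\to M$ in the Fell topology, the inner-convergence clause (b) of Lemma \ref{indiciaza} yields, along a subnet, points $x_i\in M_i$ with $x_i\to x$; transporting $u$ to a unit vector $u_i\in\ell^2(\Xi(M_i)_{x_i})$ supported on the points of $\Xi(M_i)_{x_i}$ tracking those of $\Xi(M)_x$, one would show $\p\!\lambda^{M_i}_{x_i}(g_{M_i})u_i\!\p\to\p\!\lambda^M_x(g_M)u\!\p$, whence $\p\!g_{M_i}\!\p_{{\rm C}^*_{\rm r}[\Xi(M_i)]}>c$ along that subnet. By the standard subnet criterion for $\liminf$ this gives lower semicontinuity.

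I expect the transport-and-convergence step of the last paragraph to be the only real obstacle. Because the operators $\lambda^{M_i}_{x_i}(g_{M_i})$ act on the \emph{varying} Hilbert spaces $\ell^2(\Xi(M_i)_{x_i})$, one must produce a coherent identification of the finitely many points of each fibre that meet $\supp g$ and then check that the finitely many matrix entries of $g$ converge. This is precisely where the \'etale hypothesis is indispensable: $\d$ and $\r$ being local homeomorphisms let one realize, for $i$ large, the relevant points of $\Xi(M_i)_{x_i}$ as continuous images of those of $\Xi(M)_x$, after which continuity and compact support of $g$ deliver the convergence. The difficulty is organizational rather than conceptual, and it is exactly the content that the particularization of \cite[Th.~5.5]{LR} encapsulates.
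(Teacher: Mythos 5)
Your top-level route is the same as the paper's: there, the entire proof of Proposition \ref{away} consists in viewing it as a particularization of \cite[Th.~5.5]{LR} applied to the continuous field $\big(\Gamma\!_\T(\Xi),\T,p\big)$ of Proposition \ref{kokardel}. Your preliminary observation that fibrewise evaluation on the field groupoid \emph{is} a genuine $^*$-morphism (because each $\{M\}\!\times\!M$ is $\Gamma\!_\T(\Xi)$-invariant even when $M$ is not $\Xi$-invariant) is exactly the paper's Remark \ref{obstacle}, and your sketch of (i) --- the $C(\T)$-algebra structure, the automatic upper semicontinuity of fibre norms, and the identification of the fibre at $M$ with ${\rm C}^*[\Xi(M)]$ through the full-algebra exact sequence --- is the same chain of arguments that the paper, in Remark \ref{traversura}, attributes to the proof of \cite[Th.~5.5(1)]{LR} (namely \cite{Bl} plus \cite[Prop.~5.1]{LR}). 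The one point you pass over silently is that \cite{LR} assumes second countability while the present groupoids are only $\sigma$-compact; the paper patches precisely this in Remark \ref{traversura} by invoking \cite{HLS,AZ,SWi} for the exact sequence.

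There is, however, a genuine gap in your direct argument for (ii), at the transport step, and your diagnosis that \'etaleness of $\Xi$ fills it is incorrect. \'Etaleness lets you lift the net $x_i\to x$ through $\d$: choosing a bisection $V\ni\xi$, you get $\xi_i:=(\d\vert_V)^{-1}(x_i)\to\xi$ with $\d(\xi_i)=x_i$. But $\xi_i\in\Xi(M_i)$ also requires $\r(\xi_i)\in M_i$, and nothing guarantees this: clause (b) of Lemma \ref{indiciaza} produces \emph{some} points $z_i\in M_i$ converging to $\r(\xi)$, not that the specific points $\r(\xi_i)$ lie in $M_i$. In a transformation groupoid, for instance, the unique $\d$-lift of $\xi=(x,k)$ over $x_i$ is $(x_i,k)$, whose range is $\tau_k(x_i)$, and Fell convergence $M_i\to M$ in no way forces $\tau_k(x_i)\in M_i$; one can arrange (say with a $\Z/2$-action exchanging two convergent sequences, and tame sets $M_i$ containing the limit set and Fell-converging to it) that $\Xi(M_i)_{x_i}$ contains \emph{no} point near a given $\xi\in\Xi(M)_x$. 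So \'etaleness of $\Xi$ plus Fell convergence plus tameness of each individual $M_i$ cannot justify the step. What your transport actually needs is the statement: every $\xi\in\Xi(M)_x$ is a limit, along a subnet, of elements of $\Xi(M_i)_{x_i}$; by Lemma \ref{ciciri} this is precisely the openness of $\d_\T$ on the field groupoid $\Gamma\!_\T(\Xi)$, equivalently the existence of the Haar system asserted in Proposition \ref{ciocardel} --- a hypothesis about the family $\T$ as a whole, and exactly the structural input that \cite[Th.~5.5]{LR} encodes and that makes non-invariant restrictions delicate in this paper. Once ``openness of $\d_\T$'' replaces ``\'etaleness of $\Xi$'' in your last paragraph, the finitely-many-matrix-entries argument does go through, so the repair is local; but as written, the step you yourself single out as the only real obstacle is justified by the wrong hypothesis.
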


We formulated the result this way as a hint to the general case. For us, since each $\Xi(M)$ is amenable, there is no difference between the two $C^*$-norms and thus $M\to\,\p\!g_M\!\p_{{\rm C}^*[\Xi(M)]}$ is continuous. 

\begin{Remark}\label{traversura}
{\rm \medskip
In \cite{LR} second countable groupoids are treated. Having in view applications to  discrete metric spaces (Section \ref{grafologi}), which involve the Stone-\v Cech compactification, our $\sigma$-compactness assumption is preferable. For our spectral applications, we will only need the upper semicontinuity part of Proposition \ref{away} (see \eqref{reallygets}).
The proof of point 1 in [LR] relies on general facts, on Corollary 1.9 and Lemma 1.10 from \cite{Bl} (general statements not involving groupoids) and on Proposition 5.1 in the same [LR], that is a statement about short exact sequences. The initial statement comes from \cite{MRW}, where the second countability assumption holds over the entire article. However, in \cite[pag.\,334]{HLS} it is stated that the short exact sequence for full groupoid algebras (associated to a closed invariant subset of units) do hold without second countability. References as \cite{AZ} and \cite{SWi} also make use of the short exact sequence for $\sigma$-compact groupoids that are not second countable.}
\end{Remark}

\begin{Remark}\label{obstacle}
{\rm When transcribing Theorem 5.5 from \cite{LR} in our setting, we identified each fiber 
$$
\big[\Gamma_\T(\Xi)\big](M):=p^{-1}(\{M\})=\{(M,\xi)\mid\xi\in\Xi(M)\}=\{M\}\!\times\!\Xi(M)
$$
with $\Xi(M)$\,. Actually this fiber is a closed groupoid and it is the reduction of $\Gamma_\T(\Xi)$ to the closed subset $\{M\}\!\times\!M\subset\Gamma_\T(\Xi)^{(0)}$, {\it which is $\Gamma_\T(\Xi)$-invariant even when $M\subset X$ is not $\Xi$-invariant} (easy to check)! On the other hand, the map 
\begin{equation*}\label{iota}
\iota:{\rm C}^*(\Xi)\to{\rm C}^*\big[\Gamma_\T(\Xi)\big]\,,\quad[\iota(f)](M,\xi):=f(\xi)\,,
\end{equation*}
implicitly used when stating Proposition \ref{away}, is not a morphism (once again formula \eqref{banati} is relevant). One of the consequences is that it does not commute with the functional calculus: in general $\varphi(f)_M\ne\varphi(f_M)$\,.
This makes some strategies to prove spectral continuity unavailable. But this has no effect on the proof provided in the next section.
}
\end{Remark}

\section{Proof of Theorem \ref{dorita}}\label{stendhal}

Let us assume Setting \ref{sett} and adopt the notations of Theorem \ref{dorita}. One also uses notations as $\star_M$ and $\p\!\cdot\!\p_M$ for the product and for the norm, respectively, in the $C^*$-algebra ${\rm C}^*\big[\Xi(M)\big]$, with $M\in\T_0$ or $M=X_\infty$\,. We are also going to make use of the restrictions 
$$
\rho_{M\infty}\!:{\rm C}^*\big[\Xi(M)\big]\to{\rm C}^*\big[\Xi(X_\infty)\big]\,,\quad M\in\T_0\,.
$$ 
Since $X_\infty$ is a closed invariant set of units in $\Xi(M)$\,, this is a $C^*$-epimorphism. It follows easily that
\begin{equation}\label{legatura}
\rho_{M\infty}\circ\rho_{M}=\rho_{\infty}\,,\quad\forall\,M\in\T_0\,,
\end{equation}
which may also be written as $f_\infty=\rho_{M\infty}(f_M)$\,, if $f\in{\rm C}^*[\Xi]$\,. This implies in particular 
$$
\p\!f_\infty\!\p_{X_\infty}\,\le\!\underset{M\in\T_0}{\inf}\!\!\p\!f_M\!\p_{M}.
$$
Applying the upper semicontinuity of Proposition \ref{away} at the accumulation point $X_\infty$\,, one really gets
\begin{equation}\label{reallygets}
\p\!f_\infty\!\p_{X_\infty}\,=\!\underset{M\in\T_0}{\inf}\!\!\p\!f_M\!\p_{M}.
\end{equation}

(i) It is obvious that $C^*$-algebraic morphisms only decrease spectra (where ``unmodified'' is a particular case of ``decreased''). Applying this to $\rho_{M\infty}$ and using \eqref{legatura}, one gets
$$
{\sf sp}\big(h_\infty\big)={\sf sp}\big[\rho_{\infty}(h)\big]={\sf sp}\big[\rho_{M\infty}\big(\rho_{M}(h)\big)\big]\subset{\sf sp}\big(h_M\big), 
$$
which proves the inclusion 
\begin{equation}\label{doarjuma}
{\sf sp}\big(h_\infty\big)\subset\bigcap_{M\in\T_0}{\sf sp}\big(h_M\big)\,.
\end{equation} 
Note that if $h$ is self-adjoint, then each $h_M$ is self-adjoint (since $\rho_M$ is still involutive) and all the spectra are real. Then use \eqref{doarjuma} and the fact that for any family $\big\{\Lambda_M\!\mid\!M\in\T_0\big\}$ of subsets of $\R$ one may write 
\begin{equation}\label{lungutele}
\inf\!\Big(\underset{M}{\cap}\Lambda_M\Big)\ge\sup_M\!\big(\inf\Lambda_M\big)\quad{\rm and}\quad\sup\!\Big(\underset{M}{\cap}\!\Lambda_M\Big)\le\inf_M\big(\sup\Lambda_M\big)
\end{equation}
to get corresponding inequalities instead of \eqref{frik} and \eqref{frig}.

\smallskip
(ii) The point (i) only supplied inequalities; we included it in order to show that part of the result do not depend on continuity issues.
Let $h\in{\rm C}^*[\Xi]$ be self-adjoint; since all the restriction maps $\rho_M$ are contractive, the elements $h_M$ have a common lower bound $-c$\,, so $f\!:=h+c\mathfrak 1$ is a positive element and 
$$
f_M\!:=\rho_{M}(f)=h_M+c\mathfrak 1_M\in{\rm C}^*\big[\Xi(M)\big]\,,\quad M\in\T_0\,,
$$ 
$$
f_\infty\!:=\rho_{\infty}(f)=h_\infty+c\mathfrak 1_\infty\in{\rm C}^*\big[\Xi(X_\infty)\big]
$$ 
are all self-adjoint and positive. For normal elements of $C^*$-algebras the norm coincide with the spectral radius, which equals the supremum of the spectrum in the positive case. By \eqref{reallygets}, one infers that
\begin{equation*}\label{pazal}
\sup{\sf sp}\big(f_\infty\big)=\,\p\!f_\infty\!\p_{X_\infty}\,=\inf_{M\in\T_0}\!\p\!f_M\!\p_{M}\,=\!\inf_{M\in\T_0}\!\big[\sup {\sf sp}\big(f_M\big)\big].
\end{equation*}
Finally, since ${\sf sp}\big(f_\infty\big)={\sf sp}\big(h_\infty\big)+c$ and ${\sf sp}\big(f_M\big)={\sf sp}\big(h_M\big)+c$\,, one gets the equality \eqref{frig}.

\smallskip
Then, by applying this to $-h_M=-\rho_{M}(h)=\rho_{M}(-h)$\,, one obtains
$$
\begin{aligned}
\inf{\sf sp}\big(h_\infty\big)&=-\sup\!\big[{\sf -sp}\big(h_\infty\big)\big]=-\sup\!\big[{\sf sp}\big(\!-h_\infty\big)\big]\\
&=-\inf_{M\in\T_0}\!\big[\sup{\sf sp}\big(\!-h_M\big)\big]=-\inf_{M\in\T_0}\!\big[\sup\big(\!-{\sf sp}\big(h_M\big)\big)\big]\\
&=-\inf_{M\in\T_0}\!\big[\!-\inf\big({\sf sp}\big(h_M\big)\big)\big]=\sup_{M\in\T_0}\!\big[\inf{\sf sp}\big(h_M\big)\big],
\end{aligned}
$$
which finishes the proof.

\section{The represented version}\label{balanganit}

Let $\Xi$ be a manageable groupoid over the unit space $X$ and let $N$ be a tame subset of $X$. Recall that on the $\d$-fibres $\Xi_x$\,, $x\in X$ and $\Xi(N)_x=\Xi^N_x$, $x\in N$ one works with the counting measures, giving rise, respectively, to Haar systems. We are going to need the Hilbert spaces $\H_x\!:=\ell^2(\Xi_x)$ and $\H^N_x\!:=\ell^2\big(\Xi^N_x\big)$\,. Since for $x\in N$ one has $\Xi^N_x\subset\Xi_x$\,, we get the orthogonal decomposition $\H_x=\H^N_x\oplus\ell^2\big(\Xi_x\!\setminus\!\Xi_x^N\big)$\,. If $N$ is also invariant, clearly $\Xi^N_x=\Xi_x$ and $\H^N_x=\H_x$ for every $x\in N$. In fact, using for $N=X$ the conventions $\Xi^X_x\!:=\Xi_x$ and $\H^X_x\!:=\H_x$\,, one gets unified notations.

\smallskip
Let us fix $h\in C_{\rm c}(\Xi)\subset {\sf C}^*[\Xi]$\,, to which we associate operators $\big\{{\sf H}^N_x\in \mathbb B\big(\H^N_x\big)\,\big\vert\,x\in N\big\}$\,, given by
\begin{equation}\label{aram}
\big({\sf H}^N_x v\big)(\xi):=\!\!\sum_{\alpha\beta=\xi,\r(\beta)\in N}\!\!h(\alpha)v(\beta)=\!\sum_{\eta\in\Xi^N_x}\!h\big(\xi\eta^{-1}\big)v(\eta)\,,\quad v\in\ell^2\big(\Xi^N_x\big)\,,\ \xi\in\Xi^N_x.
\end{equation}
Note that this only involves the restriction of $h$ to $\Xi^N_N\equiv\Xi(N)$\,. 

\begin{Remark}\label{grinoble}
{\rm Setting $j^N_x\!:\H^N_x\to\H_x$ for the canonical injection and $r^N_x\!=\big(j^N_x\big)^*\!:\H_x\to\H^N_x$ for the canonical projection\,, one quickly checks that 
\begin{equation*}\label{arnaut}
{\sf H}^N_x=r^N_x\!\circ{\sf H}_x\!\circ j^N_x,\quad\forall\ x\in N,
\end{equation*}
so ${\sf H}^N_x$ is the (non-invariant) restriction of ${\sf H}_x$ to $\H^N_x$. It could be called {\it a compression}.}
\end{Remark}

\smallskip
We continue within the Setting \ref{sett}, using the same notations. In particular, we still suppose that $X_\infty$ is a closed invariant subset of $X$. Instead of ${\sf H}^{X_\infty}_x$ we write simply ${\sf H}^{\infty}_x$.

\begin{Theorem}\label{constiente}
Admitting the Setting \ref{sett}, let us assume, in addition, that the groupoid $\Xi$ is second countable. Let $h\in C^*[\Xi]$ be self-adjoint.  One has
\begin{equation}\label{ploicika}
\inf_{x\in X_\infty}\!\inf{\sf sp}\big({\sf H}^\infty_x\big)=\sup_{M\in\T_0}\!\Big[\inf_{y\in M}\inf {\sf sp}\big({\sf H}_y^M\big)\Big]\,,
\end{equation}
\begin{equation}\label{zapadika}
\sup_{x\in X_\infty}\!\sup{\sf sp}\big({\sf H}_x^\infty\big)=\inf_{M\in\T_0}\!\Big[\sup_{y\in M}\sup {\sf sp}\big({\sf H}_y^M\big)\Big]\,.
\end{equation}
\end{Theorem}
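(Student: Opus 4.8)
The plan is to deduce the represented identities \eqref{ploicika}--\eqref{zapadika} from the intrinsic Theorem \ref{dorita} by reading off, for each of the relevant tame sets, the extremal points of the abstract spectrum from the spectra of the fibrewise operators. Comparing \eqref{aram} with the regular representation of the \'etale groupoid $C^*$-algebra $C^*\big[\Xi(N)\big]$ induced from a unit $y$, one sees that ${\sf H}^N_y$ is exactly the image $\pi^N_y(h_N)$ of $h_N=\rho_N(h)$ under a $^*$-representation $\pi^N_y$ on $\ell^2\big(\Xi^N_y\big)$. Everything then reduces to the fibrewise spectral identity
\begin{equation*}
\sup{\sf sp}\big(h_N\big)=\sup_{y\in N}\sup{\sf sp}\big({\sf H}^N_y\big),\qquad \inf{\sf sp}\big(h_N\big)=\inf_{y\in N}\inf{\sf sp}\big({\sf H}^N_y\big), \tag{$\ast$}
\end{equation*}
to be applied to $N=X_\infty$ (invariant, hence $\Xi(X_\infty)$ is \'etale) and to each $N=M\in\T_0$ (tame, hence $\Xi(M)$ is \'etale).

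To establish ($\ast$) I would reuse the positive-element device from Section \ref{stendhal}. Since all restriction maps and all $\pi^N_y$ are contractive, ${\sf sp}(h_N)\subset[-c,c]$ with $c:=\p\!h\!\p$, so $F:=h+c\mathfrak 1$ is positive and, writing $F_N:=\rho_N(F)=h_N+c\mathfrak 1_N$, each $\pi^N_y(F_N)={\sf H}^N_y+c\,\mathrm{Id}$ is a positive operator. For a positive element of a $C^*$-algebra the norm equals the supremum of its spectrum, and the same holds in $\mathbb B\big(\ell^2(\Xi^N_y)\big)$; moreover amenability (Remarks \ref{alghebrele}, \ref{algebrele}) gives $C^*\big[\Xi(N)\big]=C^*_{\rm r}\big[\Xi(N)\big]$, whose norm is realized by the regular representations as $\p\!F_N\!\p_N=\sup_{y\in N}\p\!\pi^N_y(F_N)\!\p$. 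Chaining these equalities yields $\sup{\sf sp}(F_N)=\sup_{y\in N}\sup{\sf sp}\big(\pi^N_y(F_N)\big)$, and subtracting the shift $c$ gives the first half of ($\ast$); applying the argument to $-h$, exactly as at the close of Section \ref{stendhal}, gives the second half. The second countability hypothesis is invoked here, to guarantee, via the disintegration theory for groupoid $C^*$-algebras, that the family $\big\{\pi^N_y\big\}_{y\in N}$ is faithful and realizes the norm as a genuine supremum over the units.

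With ($\ast$) in hand the theorem follows by substitution. Applied to $N=X_\infty$ it gives $\sup{\sf sp}(h_\infty)=\sup_{x\in X_\infty}\sup{\sf sp}\big({\sf H}^\infty_x\big)$, while applied to each $M\in\T_0$ it gives $\sup{\sf sp}(h_M)=\sup_{y\in M}\sup{\sf sp}\big({\sf H}^M_y\big)$. Inserting both into the intrinsic identity \eqref{frig} of Theorem \ref{dorita} produces exactly \eqref{zapadika}, and the parallel substitution into \eqref{frik}, using the infimum half of ($\ast$), produces \eqref{ploicika}. The common bound $\p\!h\!\p$ controls every operator uniformly, so the shift $c$ can be chosen independently of $N$, $M$ and $y$, and all suprema and infima remain finite.

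The step I expect to be the main obstacle is ($\ast$), and inside it the norm identity $\p\!F_N\!\p_N=\sup_{y\in N}\p\!\pi^N_y(F_N)\!\p$ combined with the fibrewise passage from norm to supremum of spectrum: this is where amenability and second countability do their work, and where one must ensure the supremum over units is genuine rather than merely essential. Once this is secured, both the identification ${\sf H}^N_y=\pi^N_y(h_N)$ and the purely order-theoretic combination with Theorem \ref{dorita} are routine.
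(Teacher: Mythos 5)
Your proposal is correct and follows the same skeleton as the paper's proof --- identify ${\sf H}^N_y$ with the regular-representation image $\pi^N_y(h_N)$, prove a fibrewise spectral identity for $N=X_\infty$ and for each $N=M\in\T_0$, and substitute into \eqref{frik}--\eqref{frig} of Theorem \ref{dorita} --- but the key spectral input is genuinely different. The paper's step (i) invokes the theorem of Exel \cite{Ex1} (see also \cite{NP}) asserting that, for second countable groupoids, ${\sf sp}(f)=\bigcup_{z\in N}{\sf sp}\big(\pi^N_z(f)\big)$ with \emph{no closure} on the right-hand side (this is exactly where second countability enters), and then takes $\inf$ and $\sup$ of the union. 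You instead prove only the extremal-value identity ($\ast$), via the shift $F_N=h_N+c\mathfrak 1_N$: the norm equals the supremum of the spectrum for positive elements, and the $C^*$-norm of ${\rm C}^*\big[\Xi(N)\big]$ is $\sup_{y\in N}\p\!\pi^N_y(\cdot)\!\p$ because amenability makes the full and reduced norms coincide. This is more elementary, and in fact more general than you yourself assert: the identity $\p\!F_N\!\p_N=\sup_{y\in N}\p\!\pi^N_y(F_N)\!\p$ is the very \emph{definition} of the reduced norm of a Hausdorff \'etale groupoid $C^*$-algebra (cf. \cite{BO}), extended from $C_{\rm c}\big(\Xi(N)\big)$ to the completion by density; no disintegration theory and no second countability is involved, so your appeal to that hypothesis at this point is spurious (though harmless). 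Consequently your argument actually establishes \eqref{ploicika}--\eqref{zapadika} without the second countability assumption, which the paper needs only for the stronger set identity \eqref{farstate}; conversely, the paper's route yields that stronger statement (the spectrum as an honest, closure-free union of the fibre spectra), which is of independent interest but is overkill for the extremal values, since passing to the closure of a nonempty bounded subset of $\R$ changes neither its supremum nor its infimum.
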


\begin{proof}
(i) Let us first prove that
\begin{equation}\label{farstate}
{\sf sp}\big(h_N\big)=\bigcup_{z\in N}\!{\sf sp}\big({\sf H}^N_z\big)\,,
\end{equation}
where $h_N=\rho_N(h)\in{\sf C}^*[\Xi(N)]$ is the restriction of $h$ to $\Xi(N)$\,. This has to be aplyed to $N=M,X_\infty$\,.

\smallskip
For every unit $z\in N$ there is \cite{Re,Pa,Wi} {\it the associated regular representation} $\pi^N_z\!:C_{\rm c}[\Xi(N)]\to\mathbb B\big[\ell^2(\Xi^N_z)\big]$\,, induced from the Dirac measure $\delta_z$ on $X$ and explicitly defined by
\begin{equation*}\label{bidreu}
\pi^N_z\!(f)v=f\star v\,,\quad v\in \ell^2\big(\Xi^N_z\big)\,.
\end{equation*}
It extends to a representation of the the groupoid $C^*$-algebra. It is known from \cite{Ex1} (see also \cite{NP} for a more general result) that 
\begin{equation}\label{fartate}
{\sf sp}(f)=\bigcup_{z\in N}\!{\sf sp}\big(\pi^N_z\!(f)\big)\,,
\end{equation}
a remarkable fact being that no closure is needed in the right hand side. The equality \eqref{fartate} holds even for $f\in {\sf C}^*[\Xi(N)]$\,. Here the second-countability condition is needed. A straightforward verification leads to the identification ${\sf H}^N_z\!=\pi^N_z\!(h_N)$ and this finishes our argument.

\smallskip
(ii) Then our result follows from the equalities \eqref{frik} and \eqref{frig}, from \eqref{farstate} and and from the formulae
$$
\inf\cup_i A_i=\inf_i\inf A_i\,,\quad \sup\cup_i A_i=\sup_i\sup A_i\,,
$$
valid for families of real sets. For example:
$$
\begin{aligned}
\inf_{x\in X_\infty}\!\inf{\sf sp}\big({\sf H}^\infty_x\big)&=\inf\Big[\bigcup_{x\in X_\infty}{\sf sp}\big({\sf H}_x^\infty\big)\Big]=\inf\big[{\sf sp}\big(h_\infty\big)\big]\\
&=\sup_{M\in\T_0}\!\big[\inf{\sf sp}\big(h_M\big)\big]=\sup_{M\in\T_0}\!\Big[\inf\bigcup_{y\in M}{\sf sp}\big({\sf H}^M_y\big)\Big]\\
&=\sup_{M\in\T_0}\!\Big[\inf_{y\in M}\inf{\sf sp}\big({\sf H}^M_y\big)\Big]\,.
\end{aligned}
$$
\end{proof}

\begin{Remark}\label{documenta}
{\rm Assume that $X_\infty$ is {\it topologically transitive}, i.\,e. that there is a dense orbit $\mathcal O$\,; this will lead to some simplifications in Theorem \ref{constiente}. Let $x_0$ be a point belonging to this orbit. It is known that for any other point $x_1$ of this orbit, the operators ${\sf H}^\infty_{x_0}$ and ${\sf H}^\infty_{x_1}$ are unitarily equivalent, thus isospectral. In addition, if $x\in X_\infty\!\setminus\mathcal O$, one has ${\sf sp}\big({\sf H}_x^\infty\big)\subset{\sf sp}\big({\sf H}_{x_0}^\infty\big)$\,. In particular, if $X_\infty$ is {\it minimal} (all the orbits are dense), then the spectrum of ${\sf H}^\infty_x$ is constant over $X_\infty$\,. Such results are quite common in the literature; see \cite{Ma} for instance and references therein.
Anyhow, returning to the topological transitive case, it follows that the l.\,h.\,s. of \eqref{ploicika} can be replaced by $\inf{\sf sp}\big({\sf H}^\infty_{x_0}\big)$ and the l.\,h.\,s. of \eqref{zapadika} by $\sup{\sf sp}\big({\sf H}^\infty_{x_0}\big)$\,.
}
\end{Remark}

\section{The essential spectrum}\label{talanganit}

This section is written in such a way that the reader could skip Section \ref{balanganit}. We go on assuming the Setting \ref{sett}. In dealing with the essential spectrum, we will need an extra assumption:

\begin{Definition}\label{zdandard}
The manageable groupoid $\Xi$ is called {\rm standard} if $\,X_0:=X\!\setminus\!X_\infty$ is a dense orbit whose isotropy is trivial, i.\,e.\;the isotropy group $\,\Xi_z^z$ is reduced to $\{z\}\,$ for some (and then every) $z\in X_0$ and if the (transitive and principal) groupoid $\,\Xi(X_0)$ is second countable\,.
\end{Definition}

 Then $X$ is a compactification of $X_0$\,, this one being called {\it the main orbit}\,. We only require the open subset $\Xi(X_0)$ to be second countable in the subspace topology; no such condition is imposed on $\Xi$\,.

\begin{Lemma}\label{atuncy}
The restricted map 
\begin{equation}\label{rd}
(\r,\d):\Xi(X_0)\to X_0\!\times\!X_0\,,\quad(\r,\d)(\xi):=\big(\r(\xi),\d(\xi)\big)
\end{equation}
is a homeomorphism.
\end{Lemma}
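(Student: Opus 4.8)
The plan is to first dispose of the purely algebraic content — that $(\r,\d)$ is a continuous bijection — and then to isolate the one genuinely topological point that needs work, namely that $X_0$ carries the discrete topology.

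First I would check bijectivity, which is exactly the content of the hypotheses "principal" and "transitive" in Definition \ref{zdandard}. Injectivity is principality: if $\xi,\xi'\in\Xi(X_0)$ satisfy $\r(\xi)=\r(\xi')$ and $\d(\xi)=\d(\xi')$, then $\xi^{-1}\xi'$ is composable and lies in the isotropy group $\Xi^{\d(\xi)}_{\d(\xi)}$, which is trivial, so $\xi'=\xi$. Surjectivity is transitivity: for $a,b\in X_0$, since $X_0$ is a single orbit there is $\xi\in\Xi$ with $\d(\xi)=b$ and $\r(\xi)=a$, and as $a,b\in X_0$ this $\xi$ already lies in $\Xi(X_0)=\Xi^{X_0}_{X_0}$. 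Continuity of $(\r,\d)$ is immediate from continuity of $\r$ and $\d$, so it remains only to prove that the inverse is continuous, i.e. that $(\r,\d)$ is open.

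Here I would reduce everything to one claim. Fix a base unit $x_0\in X_0$. The $\d$-fibre $\Xi_{x_0}$ is discrete (Remark \ref{ital}, $\Xi$ being \'etale), and $(\r,\d)$ carries it bijectively onto $X_0\times\{x_0\}$. Consequently, \emph{if $X_0$ is discrete}, then $X_0\times X_0$ is discrete, every subset of the target is open, so the continuous bijection $(\r,\d)$ is automatically open and hence a homeomorphism. (Conversely, a homeomorphism would force $X_0\cong\Xi_{x_0}$ to be discrete, so this reduction loses nothing.) Thus the whole lemma rests on the single claim that $X_0$ is discrete, which I would prove in three moves. (a) $X_0=\r(\Xi_{x_0})$ is countable: $\Xi_{x_0}$ is a discrete subspace of the second countable groupoid $\Xi(X_0)$ — second countability of $\Xi(X_0)$ being precisely the extra clause in the definition of a standard groupoid — hence countable. (b) Some point of $X_0$ is isolated: $X_0=X\setminus X_\infty$ is open in the compact Hausdorff space $X$, hence locally compact Hausdorff, hence a Baire space; a nonempty Baire space that is a countable union of (closed) singletons cannot have all of them with empty interior, so some $y_0\in X_0$ is isolated. (c) Isolatedness propagates along the orbit: given any $y_1\in X_0$, transitivity provides $\xi\in\Xi(X_0)$ with $\d(\xi)=y_0$ and $\r(\xi)=y_1$; since $\{y_0\}$ is open, $\d^{-1}(\{y_0\})$ is open in $\Xi(X_0)$, and being a subspace of the discrete fibre over $y_0$ it isolates $\xi$, so $\{\xi\}$ is open; applying the open map $\r$ (Remark \ref{ital}, $\Xi(X_0)$ being \'etale as an invariant restriction) gives that $\{y_1\}=\r(\{\xi\})$ is open. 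Hence every point of $X_0$ is isolated, $X_0$ is discrete, and by the reduction the proof is complete.

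I expect steps (b)–(c) — the Baire-category argument producing one isolated point, followed by transporting isolatedness across the orbit via an open bisection — to be the real content of the lemma; everything else is formal. The crucial and easily overlooked input is the second-countability clause of Definition \ref{zdandard}: without it $X_0$ need not be countable, the Baire argument collapses, and $X_0$ could fail to be discrete (so that $(\r,\d)$ would not be a homeomorphism).
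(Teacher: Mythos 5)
Your proof is correct, but it takes a genuinely different route from the paper's. The paper handles bijectivity exactly as you do (surjectivity from transitivity of $X_0$, injectivity from triviality of the isotropy) and then simply cites \cite[Th.\,2.2.A\,,Th.\,2.2.B]{MRW} for openness, where it is proved in a more general context (second countable, locally compact, transitive groupoids with Haar systems, not necessarily \'etale); the fact that $X_0$ and $\Xi_z$ are homeomorphic discrete countable spaces is then recorded only afterwards, in Remark \ref{cumetrii}, as a \emph{consequence} of that openness. You reverse the logic: you prove directly that $X_0$ is discrete --- countability of $X_0$, a Baire-category argument in the locally compact Hausdorff space $X_0$ producing one isolated point, and the \'etale structure transporting isolatedness along the orbit via open singletons $\{\xi\}$ inside open $\d$-fibres --- after which openness of $(\r,\d)$ is trivial. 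Each step checks out: $\Xi_{x_0}\subset\Xi(X_0)$ by invariance, a discrete subspace of a second countable space is countable, and $\r$ restricted to $\Xi(X_0)$ is open because $\Xi(X_0)$ is an open subgroupoid of the \'etale groupoid $\Xi$. What your approach buys is a self-contained elementary proof that isolates the true content of the lemma in the \'etale setting (the statement is \emph{equivalent} to discreteness of $X_0$, as your converse remark notes); what the paper's citation buys is brevity and a result valid beyond the \'etale case. One refinement: your closing claim that second countability is indispensable is overstated. Since $\Xi$ is manageable it is $\sigma$-compact (Definition \ref{manageable}), and $\Xi_{x_0}=\d^{-1}(\{x_0\})$ is a \emph{closed} discrete subspace of $\Xi$, hence meets each compact set in a finite set and is therefore countable; so your step (a), and with it the whole argument, survives with $\sigma$-compactness alone. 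Second countability is what the general non-\'etale machinery of \cite{MRW} requires; in your argument it is merely one convenient way to obtain countability.
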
 

\begin{proof}
Surjectivity follows from the fact that $X_0$ is an orbit, while injectivity follows from the triviality of the isotropy groups. Openness is difficult to prove, but this is done (in a more general context, using the second countability condition) in \cite[Th.\,2.2.A\,,Th.\,2.2.B]{MRW}.
\end{proof}

It is convenient to use the notation
\begin{equation}\label{rdinv}
\th:=(\r,\d)^{-1}\!:X_0\!\times\!X_0\to \Xi(X_0)\,.
\end{equation}
Thus, if $x,y\in X_0$\,, then $\th(x,y)$ is the unique element $\xi$ with $\r(\xi)=x$ and $\d(\xi)=y$\,. If $h\in C_{\rm c}(\Xi)$ one defines in the Hilbert space $\ell^2(X_0)$ the operator 
\begin{equation}\label{vinalla}
({\sf H}u)(x):=\sum_{y\in X_0}\!h\big(\th(x,y)\big)u(y)\,,\quad x\in X_0\,.
\end{equation}

Let us now assume that $M\in\T_0:=\T\!\setminus\!\{X_\infty\}$\,. A direct application of the definitions shows that $M_0\!:=M\cap X_0\subset M$ is an open orbit of the groupoid $\Xi(M)$ and that the isotropy over points of $M_0$ is trivial. We require, in addition, that $X_0\!\setminus\!M_0=X\!\setminus\!M$ is relatively compact in $X_0$\,. Then $M_0$ is dense in $M$ and it becomes the main orbit of $\Xi(M)$\,. {\it In such a case, $\Xi(M)$ is manageable and standard.} An argument similar to that indicated above associates to every $h_M\in C_{\rm c}\big(\Xi(M)\big)$ the operator 
\begin{equation}\label{vanilla}
\big({\sf H}_M v\big)(x):=\sum_{y\in M_0}h_M\big(\th_M(x,y)\big)v(y)\,,\quad x\in M_0\,,\ v\in\ell^2(M_0)\,,
\end{equation}
where $\th_M\!:=(\r,\d)^{-1}\!:M_0\!\times\!M_0\to \Xi(M_0)$ is the restriction of the map \eqref{rdinv}. 

\begin{Setting}\label{setter} 
Let $\Xi$ be a standard manageable groupoid with unit space $X$ and main orbit $X_0$\,. Let $\,\T$ be a closed subfamily of $\,\Cl(X)$ such that
\begin{itemize}
\item
the closed invariant subset $X_\infty\!:=X\!\setminus\!X_0$ is an accumulation point of $\,\T$\,,
\item
for every $M\in\T_0:=\T\!\setminus\!\{X_\infty\}$\,, one has $X_\infty\subset\!M$ and $X\!\setminus\!M$ is relatively compact in $X_0$\,.
\end{itemize}
\end{Setting}

In Remark \ref{cumetrii} there are some comments upon this setting, explaining in particular why all the subsets $M$ are tame. With respect to Setting \ref{sett}, we ask additionally $\Xi$ to be standard; the main orbit is the complement of the set $X_\infty$ and we require it to be open and free (no isotropy). In particular, $X$ is a {\it regular compactification} of $X_0$\,. For the next result, we denote by ${\sf sp_{\rm ess}}(S)$ {\it the essential spectrum} of the linear bounded operator $S$ acting in a Hilbert space. The statement will refer to compactly supported elements, but in Remark \ref{vine} more general choices will be discussed.

\begin{Theorem}\label{consecinte}
Admitting the Setting \ref{setter}, for any self-adjoint element $h\in C_{\sf c}(\Xi)$ and $M\in\T_0$ we consider the operators given by \eqref{vinalla} and \eqref{vanilla}, where $h_M\!:=h|_{\Xi(M)}$\,.
One has
\begin{equation}\label{ploicica}
\inf{\sf sp_{\rm ess}}({\sf H})=\sup_{M\in\T_0}\!\big[\inf {\sf sp}\big({\sf H}_M\big)\big]\,,
\end{equation}
\begin{equation}\label{zapadica}
\sup{\sf sp_{\rm ess}}({\sf H})=\inf_{M\in\T_0}\!\big[\sup {\sf sp}\big({\sf H}_M\big)\big]\,.
\end{equation}
\end{Theorem}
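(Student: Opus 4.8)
The plan is to realize the operator ${\sf H}$ of \eqref{vinalla} as a regular representation of $h$ induced from a point of the main orbit, to identify its essential spectrum with ${\sf sp}(h_\infty)$, and then to let Theorem \ref{dorita} do the rest. First I would fix $x_0\in X_0$ and identify ${\sf H}=\pi^X_{x_0}(h)$, where $\pi^X_{x_0}:C^*[\Xi]\to\mathbb B\big(\ell^2(\Xi_{x_0})\big)$ is the regular representation induced from $x_0$. Indeed, since $X_0$ is an orbit one has $\Xi_{x_0}=\Xi(X_0)_{x_0}$, and Lemma \ref{atuncy} gives $\Xi_{x_0}\cong X_0$ through $\r$; computing $h\star v$ recovers \eqref{vinalla}, so $\ell^2(\Xi_{x_0})\cong\ell^2(X_0)$ intertwines $\pi^X_{x_0}(h)$ with ${\sf H}$. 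Because $X_0$ is open and invariant, I would invoke the standard short exact sequence
\begin{equation*}
0\longrightarrow C^*[\Xi(X_0)]\longrightarrow C^*[\Xi]\overset{\rho_{X_\infty}}{\longrightarrow}C^*[\Xi(X_\infty)]\longrightarrow 0,
\end{equation*}
whose kernel $C^*[\Xi(X_0)]$ is, by Lemma \ref{atuncy} and triviality of the isotropy, the $C^*$-algebra of the pair groupoid $X_0\times X_0$, hence isomorphic to $\mathbb K\big(\ell^2(X_0)\big)$. Restricted to this ideal, $\pi^X_{x_0}$ is exactly the defining representation of $\mathbb K\big(\ell^2(X_0)\big)$, so $\pi^X_{x_0}\big(C^*[\Xi(X_0)]\big)=\mathbb K\big(\ell^2(X_0)\big)$.

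The central step is the equality ${\sf sp_{\rm ess}}({\sf H})={\sf sp}(h_\infty)$. I would first argue that $\pi:=\pi^X_{x_0}$ is \emph{faithful} on $C^*[\Xi]$: the function $x\mapsto\,\p\pi^X_x(a)\p$ is lower semicontinuous and constant along orbits, so density of the orbit $\mathcal O(x_0)=X_0$ forces $\,\p\pi^X_x(a)\p\le\,\p\pi^X_{x_0}(a)\p$ for all $x$; since $\bigoplus_x\pi^X_x$ realizes the reduced norm, which equals the full norm by amenability, $\pi$ is isometric. Faithfulness together with $\pi\big(C^*[\Xi(X_0)]\big)=\mathbb K$ yields $\pi^{-1}\big(\mathbb K\big)=C^*[\Xi(X_0)]$: any $a$ with $\pi(a)\in\mathbb K$ agrees under $\pi$ with some $b\in C^*[\Xi(X_0)]$, whence $a=b$. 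Composing $\pi$ with the Calkin quotient $q:\mathbb B\big(\ell^2(X_0)\big)\to\mathbb B/\mathbb K$ therefore annihilates exactly $C^*[\Xi(X_0)]=\ker\rho_{X_\infty}$, so $q\circ\pi$ factors through a \emph{faithful} morphism $\bar\pi:C^*[\Xi(X_\infty)]\to\mathbb B/\mathbb K$ with $\bar\pi(h_\infty)=q({\sf H})$. As a faithful morphism preserves spectra and the essential spectrum is the spectrum in the Calkin algebra, ${\sf sp_{\rm ess}}({\sf H})={\sf sp}\big(q({\sf H})\big)={\sf sp}(h_\infty)$.

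Next I would treat each $M\in\T_0$. As noted after \eqref{vanilla}, $\Xi(M)$ is again standard and manageable, with dense main orbit $M_0$, and the operator ${\sf H}_M$ of \eqref{vanilla} is the regular representation $\pi^M_{x_0'}(h_M)$ induced from a point $x_0'\in M_0$. The same faithfulness argument, now applied to the amenable \'etale groupoid $\Xi(M)$ with its dense orbit $M_0$, shows $\pi^M_{x_0'}$ is isometric, hence ${\sf sp}\big({\sf H}_M\big)={\sf sp}(h_M)$. Combining the two identifications with \eqref{frik} and \eqref{frig} of Theorem \ref{dorita} (valid since Setting \ref{setter} entails Setting \ref{sett}, cf. Remark \ref{cumetrii}) I obtain
\begin{equation*}
\inf{\sf sp_{\rm ess}}({\sf H})=\inf{\sf sp}(h_\infty)=\sup_{M\in\T_0}\inf{\sf sp}(h_M)=\sup_{M\in\T_0}\inf{\sf sp}\big({\sf H}_M\big),
\end{equation*}
and symmetrically for the suprema, which are precisely \eqref{ploicica} and \eqref{zapadica}.

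I expect the main obstacle to be the essential-spectrum identification of the second paragraph, namely the faithfulness of $\pi^X_{x_0}$ and the resulting equality $\pi^{-1}(\mathbb K)=C^*[\Xi(X_0)]$: this is exactly where \emph{both} amenability (to pass between full and reduced norms) and standardness (the dense, isotropy-free orbit $X_0$ and the openness of $(\r,\d)$ from Lemma \ref{atuncy}) are indispensable. A secondary point to check carefully is that the lower semicontinuity of $x\mapsto\p\pi^X_x(a)\p$ over the unit space is available in the present $\sigma$-compact, not necessarily second countable, setting, paralleling the discussion in Remark \ref{traversura}.
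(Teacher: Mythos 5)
Your proposal is correct and follows essentially the same route as the paper's proof: identify ${\sf H}$ and ${\sf H}_M$ with the (faithful) regular/vector representations of $h$ and $h_M$ induced from points of the dense main orbits, use the pair-groupoid identification ${\rm C}^*[\Xi(X_0)]\cong\mathbb K\big[\ell^2(X_0)\big]$ together with $\ker\rho_{X_\infty}={\rm C}^*[\Xi(X_0)]$ to conclude ${\sf sp}_{\rm ess}({\sf H})={\sf sp}(h_\infty)$ via the Calkin algebra, and then invoke Theorem \ref{dorita}. The only cosmetic difference is that you sketch the faithfulness argument (lower semicontinuity plus dense orbit plus amenability) where the paper simply cites \cite{KS}.
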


As a preparation for the proof, we are going to see how one recovers formulae as \eqref{vinalla} and \eqref{vanilla} by means of representations of groupoid $C^*$-algebras. First, for every unit $z$\,, we recall {\it the associated regular representation} $\pi_z\!:{\sf C}^*[\Xi]\to\mathbb B\big[\ell^2(\Xi_z)\big]$\,, defined slightly formally on $C_{\rm c}(\Xi)$ by
\begin{equation*}\label{bideu}
\pi_z(f)u=f\star u\,,\quad f\in C_{\rm c}(\Xi)\,,\ u\in \ell^2(\Xi_z)\,.
\end{equation*}
It has also been mentioned before, during the  proof of Theorem \ref{constiente}; its detailed definition as a representation induced from the Dirac measure $\delta_z$ on the unit space may be found in \cite[pag.\,16-18]{Wi1}. See also the proof of Lemma \ref{ingras}, where it will be used explicitly. It is known \cite{KS} that, whenever $z$ belongs to a dense orbit, $\pi_z$ is faithful. In fact this holds for the reduced algebra, but our amenability assumption settles this.

\smallskip
To switch to operators acting in $\ell^2(X_0)$\,, an important remark is needed. For every $z\in X_0$\,, the restriction 
$$
{\rm r}_z\!:={\rm r}|_{\Xi_z}\!:\Xi_z=\Xi(X_0)_z\to X_0
$$ 
of the range map to the $\d$-fiber over $z$\,, always surjective (since $X_0$ is an orbit), is also injective since the isotropy over $z$ is trivial. Clearly the transformation
$$
R_z\!:\ell^2(X_0)\to \ell^2\big(\Xi_z\big)\,,\quad R_z(u):=u\circ {\rm r}_z
$$
is a unitary operator. Thus one has a faithful representation 
\begin{equation*}\label{feisfulica}
\Pi:{\rm C}^*[\Xi]\to\mathbb B\big[\ell^2(X_0)\big]\,,\quad\Pi(f)=R_z^{-1}\pi_z(f)R_z\,,
\end{equation*}
 given on $C_{\rm c}(\Xi)$ by
\begin{equation*}\label{bibidem}
\Pi(f)u:=\big[f\star\!(u\circ {\rm r}_z)\big]\circ {\rm r}_z^{-1}.
\end{equation*}
We are going to see that it does not depend on $z$\,. Similar considerations may be applied to the standard groupoid $\Xi(M)$\,, using some $z\in M_0$\,; the corresponding representation will be denoted by $\Pi_M$. Generically, such representations are called {\it vector representations}. 

\begin{Lemma}\label{ingras}
In the framework of Theorem \ref{consecinte} one has ${\sf H}=\Pi(h)$ and ${\sf H}_M=\Pi_M(h_M)$\,.
\end{Lemma}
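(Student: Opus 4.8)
The plan is to prove both identities by a single direct computation: unwind the definition of the vector representation and match it term-by-term with the matrix forms \eqref{vinalla} and \eqref{vanilla}. I focus on ${\sf H}=\Pi(h)$, since ${\sf H}_M=\Pi_M(h_M)$ then follows by repeating the argument verbatim for the standard manageable groupoid $\Xi(M)$, whose existence and standardness are guaranteed by the discussion preceding Setting \ref{setter}. First I would fix $z\in X_0$ and recall that, by Lemma \ref{atuncy} together with the triviality of the isotropy, $\r_z=\r|_{\Xi_z}:\Xi_z\to X_0$ is a bijection whose inverse is $\r_z^{-1}(x)=\th(x,z)$, the unique $\xi\in\Xi_z$ (that is, with $\d(\xi)=z$) satisfying $\r(\xi)=x$. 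Evaluating $\Pi(h)u=R_z^{-1}\pi_z(h)R_z\,u=[h\star(u\circ\r_z)]\circ\r_z^{-1}$ at $x\in X_0$ then reduces the claim to the pointwise identity $[\Pi(h)u](x)=\big(h\star(u\circ\r_z)\big)(\th(x,z))$.

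Next I would expand the convolution using the second form of the product in Remark \ref{algebrele}. Since $\d(\th(x,z))=z$, one has
\begin{equation*}
\big(h\star(u\circ\r_z)\big)(\th(x,z))=\sum_{\zeta\in\Xi_z}h\big(\th(x,z)\,\zeta^{-1}\big)\,u(\r(\zeta))\,.
\end{equation*}
Because $X_0$ is an orbit with trivial isotropy, the map $\zeta\mapsto y:=\r(\zeta)$ is a bijection of $\Xi_z$ onto $X_0$ with inverse $y\mapsto\th(y,z)$. Substituting $\zeta=\th(y,z)$, using $\th(y,z)^{-1}=\th(z,y)$, and invoking the key identity $\th(x,z)\,\th(z,y)=\th(x,y)$ — which holds by uniqueness of elements with prescribed range and source in the principal transitive groupoid $\Xi(X_0)$ — I obtain
\begin{equation*}
[\Pi(h)u](x)=\sum_{y\in X_0}h\big(\th(x,y)\big)\,u(y)=({\sf H}u)(x)\,,
\end{equation*}
which is exactly \eqref{vinalla}. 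The right-hand side contains no reference to $z$, which simultaneously confirms the claimed $z$-independence of $\Pi$ on $C_{\rm c}(\Xi)$; on all of $C^*[\Xi]$ this independence then follows by density together with the faithfulness of the $\pi_z$ for $z$ in the dense orbit.

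Finally, for the second identity I would apply the identical computation inside $\Xi(M)$, with $z\in M_0$: the relevant $\d$-fibre is $\Xi(M)_z=\Xi^M_z$, the map $\th_M$ is the restriction of $\th$ to $M_0\times M_0$, and $h_M=h|_{\Xi(M)}$ gives $h_M(\th_M(x,y))=h(\th(x,y))$ for $x,y\in M_0$, reproducing \eqref{vanilla}. The only genuinely delicate ingredients are the bijectivity of $\r_z$ and the cocycle identity $\th(x,z)\,\th(z,y)=\th(x,y)$; both rest entirely on Lemma \ref{atuncy} and on the vanishing of the isotropy, so beyond careful bookkeeping of ranges and sources I expect no real obstacle.
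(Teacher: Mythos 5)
Your proposal is correct and follows essentially the same route as the paper's own proof: evaluate $\Pi(h)u$ at $x$, expand the convolution over the fibre $\Xi_z$, change variables via the bijection $\r_z$, and identify the resulting groupoid element as $\th(x,y)$ by uniqueness of the element with prescribed range and source (the paper writes this element as $\r_z^{-1}(x)\big[\r_z^{-1}(y)\big]^{-1}$, which is exactly your $\th(x,z)\,\th(z,y)$). The only cosmetic difference is that you make the identity $\r_z^{-1}(x)=\th(x,z)$ and the cocycle relation explicit, and you add a short remark on the $z$-independence of $\Pi$, both of which are consistent with the paper's treatment.
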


\begin{proof}
We compute for $x\in X_0$
$$
\begin{aligned}
\big(\Pi(h)u\big)(x)&=\big[h\star\!(u\circ {\rm r}_z)\big]\big({\rm r}_z^{-1}(x)\big)\\
&=\underset{{\eta\in\Xi_z}}{\sum}h\big({\rm r}^{-1}_z(x)\eta^{-1}\big)u\big(r_z(\eta)\big)\\
&=\underset{{y\in X_0}}{\sum}h\Big(\r^{-1}_z(x)\big[\r^{-1}_z(y)\big]^{-1}\Big)u(y)\\
&=\underset{{y\in X_0}}{\sum}h\big(\th(x,y)\big)u(y)\,.
\end{aligned}
$$
For the second equality we used the explicit form of the composition $\star$ and the identity 
$$
\r(\eta)^{-1}\!=\d(\eta)=\d\big[{\rm r}_z^{-1}(x)\big]=z\,.
$$ 
We get the third one through a change of variables. The forth follows from 
$$
\d\Big(\r^{-1}_z(x)\big[\r^{-1}_z(y)\big]^{-1}\Big)=y\quad{\rm and}\quad\r\Big(\r^{-1}_z(x)\big[\r^{-1}_z(y)\big]^{-1}\Big)=x\,,
$$ 
by unicity. A similar calculation leads to ${\sf H}_M=\Pi_M(h_M)$\,.
\end{proof}

\begin{Remark}\label{cumetrii}
{\rm We noticed above that the continuous map ${\rm r}_z\!:\Xi_z\to X_0$ is one-to-one. But, as follows from \cite[Th.\,2.2.A\,,Th.\,2.2.B]{MRW}, it is also open; second-countability of $\Xi(X_0)$ is used. Recalling Remark \ref{ital}, we conclude that {\it $X_0$ and $\Xi_z$ are homeomorphic discrete countable spaces}. Therefore, asking $X\!\setminus\!M$ to be relatively compact in Setting \ref{setter} actually means that it is finite. Under such a hypothesis $M$ is clopen and, by Lemmas \ref{daca} and \ref{teim}, it is tame and $\Xi(M)$ is automatically \'etale.}
\end{Remark}

We proceed now to the proof of Theorem \ref{consecinte}.

\begin{proof}
The (invariant) reduction $\Xi(X_0)$ to the orbit $X_0$ is isomorphic to the pair groupoid $X_0\!\times\!X_0$ through \eqref{rd}\,. And then, by \cite[Th.\,3.1]{MRW}, one has ${\rm C}^*[\Xi(X_0)]\cong\mathbb K\big[\ell^2(X_0)\big]$\,. Similarly, $\Xi(M_0)$ is isomorphic to $M_0\!\times\!M_0$ and ${\rm C}^*[\Xi(M_0)]\cong\mathbb K\big[\ell^2(M_0)\big]$\,. The isomorphisms are precisely realized by the corresponding vector representations.

\smallskip
By Lemma \ref{ingras}, and since each $\Pi_M$ is faithful, one has ${\sf sp}\big({\sf H}_M\big)={\sf sp}\big[\Pi_M\big(h_M)\big]={\sf sp}(h_M)$\,. This shows that the right-hand sides of \eqref{frik} and \eqref{ploicica} and of \eqref{frig} and \eqref{zapadica}\,, respectively, are equal, so we only need to show that the left-hand sides are also equal. The essential spectrum of ${\sf H}=\Pi(h)$ coincides with the spectrum of its canonical image in the Calkin $C^*$-algebra $\mathbb B\big[\ell^2(X_0)\big]/\mathbb K\big[\ell^2(X_0)\big]$\,; this image is actually contained in 
$$
\Pi\big({\rm C}^*[\Xi]\big)/\mathbb K\big[\ell^2(X_0)\big]=\Pi\big({\rm C}^*[\Xi]\big)/\Pi\big[\ker(\rho_{\infty})\big]\cong{\rm  }{\rm C}^*[\Xi]/\ker(\rho_{\infty})\cong{\rm C}^*\big[\Xi(X_\infty)\big]\,.
$$
We used the fact (cf.\;\cite{MRW}) that ${\sf C}^*[\Xi(X_0)]$ can be identified with the kernel of the epimorphism $\rho_\infty:{\rm C}^*[\Xi]\to{\rm C}^*\big[\Xi(X_\infty)\big]$\,.
Under the above composition of isomorphisms (that all preserve the spectra), ${\sf H}+\mathbb K\big[\ell^2(X_0)\big]$ corresponds to $h_\infty=\rho_{\infty}(h)$\,, so ${\sf sp}_{\rm ess}({\sf H})={\sf sp}(h_\infty)$\,.
This finishes the proof.
\end{proof}

\begin{Remark}\label{compression}
{\rm One can write $\ell^2(X_0)=\ell^2(M_0)\oplus\ell^2(X_0\!\setminus\!M_0)$\,. In terms of the canonical injection ${\sf j}_{M}:\ell^2(M_0)\to\ell^2(X_0)$ and its adjoint, the canonical projection ${\sf p}_{M}:\ell^2(X_0)\to\ell^2(M_0)$\,, we define 
\begin{equation*}\label{lichie}
\mathfrak R_{M}:\mathbb B\big[\ell^2(X_0)\big]\to\mathbb B\big[\ell^2(M_0)\big]\,,\quad \mathfrak R_{M}(S):={\sf p}_{M}\circ S\circ{\sf j}_{M}\,.
\end{equation*}
It is a linear contraction; 
{\it in general it is not a $C^*$-morphism.} Its relevance comes from the formula ${\sf H}_M=\mathfrak R_{M}({\sf H})$\,,
showing that {\it ${\sf H}_M$ is a compression of $\,{\sf H}$}\,. In fact, one has the commutative diagram
\begin{equation*}\label{iagram}
\begin{diagram}
\node{{\rm C}^*[\Xi]}\arrow{e,t}{\Pi}\arrow{s,r}{\rho_M}\node{\mathbb B\big[\ell^2(X_0)\big]}\arrow{s,r}{\mathfrak R_M}\\
\node{{\rm C}^*[\Xi(M)]}\arrow{e,t}{\Pi_M}\node{\mathbb B\big[\ell^2(M_0)\big]}
\end{diagram}
\end{equation*}
composed of two horizontal faithful representations and two vertical linear contractions, both modeling (non-invariant) restrictions. 
}
\end{Remark}

\begin{Remark}\label{vrine}
{\rm As said before, the present section is written independently of Section \ref{balanganit}. To understand why in Theorem \ref{consecinte} there is no proliferation of Hamiltonians connected to various units, as in Theorem \ref{constiente}, one has to take into account the vector representations and the fact that both $\Xi$ and $\Xi(M)$ have topologically transitive unit spaces. Remark \ref{documenta} could be reviewed at this point.}
\end{Remark}

\begin{Remark}\label{vine}
{\rm In Theorem \ref{consecinte} the element $h$ has been chosen compactly supported, for simplicity. For the nature of general elements $h\in{\rm C}^*[\Xi]$ (still functions) and their restrictions $h\to h_M$\,, we recall Remarks \ref{han} and \ref{onrestrictions}, respectively.  The operators $\Pi(h)$ and $\Pi_M(h_M)$ make sense in general, and this is all one needs in the proof of Theorem \ref{consecinte}. Let us make, however, some comments on instances when the explicit formulae \eqref{vinalla} and \eqref{vanilla} may still be used as they stand. One can apply the Schur test to operators of the form
\begin{equation*}\label{cuintegru}
[\Upsilon(F)u](x):=\sum_{y\in X_0}F(x,y)u(y)\,,
\end{equation*}
where $u\in\ell^2(X_0)$ and $F:X_0\times X_0\to\mathbb C$ is a suitable function, getting 
\begin{equation*}\label{schnur}
\p\!\Upsilon(F)\!\p_{\mathbb B[\ell^2(X_0)]}\,\le\Big(\sup_{x}\sum_y|F(x,y)|\Big)^{1/2}\Big(\sup_{y}\sum_x|F(x,y)|\Big)^{1/2}.
\end{equation*}
In terms of  \eqref{rdinv}, define the map $\Th(f):=f\circ\th$\,, for which one has $\Pi=\Upsilon\circ\Th$\,. One checks easily that 
$$
\th\big(\{x\}\times X_0\big)=\Xi^x\quad{\rm and}\quad \th\big(X_0\times\{y\}\big)=\Xi_y\,,\quad \forall\,x,y\in X_0\,.
$$ 
Then, by an obvious change of variables
$$
\begin{aligned}
\p\!\Pi(f)\!\p_{\mathbb B[\ell^2(X_0)]}\,&\le \Big(\sup_{x}\sum_y\big\vert f\big(\th(x,y)\big)\big\vert\Big)^{1/2}\Big(\sup_{y}\sum_x\big\vert f\big(\th(x,y)\big)\big\vert\Big)^{1/2}\\
&=\Big(\sup_{x}\sum_{\xi\in\Xi^x}|f(\xi)|\Big)^{1/2}\Big(\sup_{y}\sum_{\xi\in\Xi_y}|f(\xi)|\Big)^{1/2}\\
&\le\,\max\Big(\sup_{x}\sum_{\xi\in\Xi^x}|f(\xi)|\,,\sup_{y}\sum_{\xi\in\Xi_y}|f(\xi)|\Big)\\
&= \;\p\!f\!\p_{{\rm Hahn}(\Xi)}\,,
\end{aligned}
$$
showing that $\Pi:{\rm Hahn}(\Xi)\to\mathbb B\big[\ell^2(X_0)\big]$ is contractive (this also follows from abstract considerations) and that $\Pi(h)$ is given by formula \eqref{vinalla} for every $h$ in the Hahn algebra introduced in Remark \ref{han}. Then $X_0$ may be replaced by $M_0$\,.
}
\end{Remark}

\section{Bi-sided subshifts}\label{geografi}

Let $\A$ be a finite set (an alphabet) seen as a discrete space, with elements often called {\it letters}. The space $\A^\Z$ of all functions $\Z\to\A$ is a second countable, compact, totally disconnected, metrizable space with the product topology. The group $\Z$ acts continuously on $\A^\Z$ by
\begin{equation*}\label{ctiune}
\tau:\Z\!\times\!\A^\Z\to\A^\Z,\quad\tau_k(x):=x(\cdot-k)\,.
\end{equation*}

We often regard the elements of $\A^Z$ as two-sided sequences and use notations as
\begin{equation*}\label{tations}
x=\dots a_{-3}a_{-2}a_{-1}\!\downarrow\!a_0a_1a_2a_3\dots
\end{equation*}
with $a_j\in\A$\,, the arrow indicating the position of $0$\,. To indicate a base of the topology of $\A^\Z$, and for other reasons, we are going to use {\it (finite) words}, i.\,e.\;elements of {\it the free monoind} $\A^*\!:=\bigsqcup_{n\in\N}\A^n$\,, where $\A^0\!:=\{\heartsuit\}$ consists only of the empty word. If $u\in\A^n$ we write $\#u=n$\,; thus $\#\heartsuit=0$\,. The monoid composition law on $\A^*$ is concatenation. If $v=w_1 uw_2$\,, for two words $w_1,w_2$\,, we write $u\prec v$ and say that {\it $u$ is contained in} $v$. For $x\in\A^\Z$ and $i,j\in\Z$ with $i\le j$\,, we denote by $x|_{[i,j]}$ the restriction of $x$ to the "interval" $[i,j]:=\{i,i+1,\dots,j-1,j\}$\,, seen as a word $u$ with letters in $\A$\,. We also write $u\prec x$\,. One could also write $x|_F$ for the restriction of $x$ to the subset $F$ of $\Z$\,. The family 
\begin{equation*}\label{dictionar}
\mathcal D(x):=\{u\in\A^*\!\mid u\prec x\}
\end{equation*} 
is called {\it the dictionary of $x$}\,, and if $Y\subset\A^\Z$ then 
\begin{equation*}\label{superdictionar}
\mathcal D(Y)\!:=\bigcup_{x\in Y}\mathcal D(x)
\end{equation*} 
is called {\it the dictionary of $\,Y$}. 

\smallskip
We call {\it (bi-sided) subshift} any closed $\tau$-invariant subset $X$ of $\A^\Z$\,; then $(X,\tau,\Z)$ is a dynamical system. The space $X$ with the induced topology is second countable, Hausdorff, compact, totally disconnected and metrizable, i.\,e.\;{\it a second-countable Stone space}. A countable base of clopen sets is 
\begin{equation*}\label{baza}
\mathscr B:=\{\mathcal C_X(u,v)\!\mid\! uv\in\mathcal D(X)\}\,,
\end{equation*}
where, if $\#u=n$ and $\#v=m$\,, one introduces {\it the cylindrical set}
\begin{equation*}\label{clopen}
\mathcal C_X(u,v):=\big\{x\in X\,\big\vert\, x|_{[-n,m-1]}=u\!\downarrow\!v\big\}\,.
\end{equation*}
The convergence in $X$ is easy to express: 
{\it one has $x_\nu\to x$ if and only if for every $k\in\Z$ there exists $\nu_k$ such that $x_\nu(k)=x(k)$ for every $\nu\ge\nu_k$}\,. So the topology of pointwise convergence is involved, combined with the fact that $\A$ is a discrete (finite) space. To the dynamical system $(X,\tau,\Z)$ one associates canonically a transformation groupoid $\Xi:=X\!\rtimes_\tau\Z$\,. It will appear in detail in the proof of Theorem \ref{constiinte}, together with its non-invariant restrictions. We prefer to introduce first the operators we study and to state the result.

\smallskip
So we introduce now some objects and notations that will be used in Theorem \ref{constiinte}. Let us fix a closed subset $N$ of the bi-sided subshift $X\subset\A^\Z$\,; it could be non-invariant under $\tau$. For any $x\in N$ set 
\begin{equation}\label{turulec}
\Z_x(N):=\{k\in\Z\!\mid\!\tau_k(x)\in N\}\subset\Z\,.
\end{equation}
To any continuous function $h:X\!\times\!\Z\to\mathbb C$ having compact support, we attach the bounded operator ${\sf H}^N_x$ in the Hilbert space $\ell^2\big(\Z_x(N)\big)$ given by the formula
\begin{equation}\label{voinik}
\big[{\sf H}^N_x\!(v)\big](k):=\!\!\sum_{l\in\Z_x(N)} \!h\big(\tau_l(x),k-l\big)v(l)\,,\quad k\in\Z_x(N)\,.
\end{equation}

\begin{Remark}\label{manshelam}
{\rm We will say that {\it $h$ is self-adjoint} if
\begin{equation*}\label{sa}
h\big(\tau_k(x),-k\big)=\overline{h(x,k)}\,,\quad\forall\,x\in X,\ k\in\Z\,.
\end{equation*}
The groupoid meaning of this requirement will become obvious in the proof of Theorem \ref{constiinte}. For the time being, it is enough to note that it insures the self-adjointness of the operators appearing in \eqref{voinik}.
}
\end{Remark}

\begin{Example}\label{firstday}
{\rm  If $h$ does not depend on the first variable, it can be seen as a sort of Toeplitz operator: it is the compression to $\ell^2\big(\Z_x(N)\big)$ of the convolution operator by $h:\Z\to\mathbb C$ acting in $\ell^2(\Z)$\,.
But in general ${\sf H}^N_x$ is a much more complicated and interesting operator.}
\end{Example}

\begin{Example}\label{seconday}
{\rm
If (and only if) $N$ is $\tau$-invariant, one gets $\Z_x(N)=\Z$\,. This will be the case in Theorem \ref{constiinte} for $N=X$ or $N=X_\infty$\,. In such a case, one may conjugate the operator ${\sf H}^N_x$, acting in $\ell^2(\Z)$\,, with the standard unitary Fourier transformation $\mathcal F:\ell^2(\Z)\to L^2(\mathbb T)$\,, where the torus $\mathbb T$ is seen as the Pontryagin dual $\hat\Z$ of the Abelian group $\Z$\,. The operator ${\rm Op}\big(\tilde h_x\big)\!:=\mathcal F\,{\sf H}^N_x\mathcal F^{-1}\!\in\mathbb B\big[L^2(\mathbb T)\big]$ has the explicit form
\begin{equation*}\label{pseudodiff}
\big[{\rm Op}\big(\tilde h_x\big)w\big](t)=\sum_{l\in\Z}\int_\mathbb T e^{il(s-t)}\hat h(\tau_l(x),t)w(s)ds\,.
\end{equation*}
We denoted by $\hat h:N\!\times\!\mathbb T\to\mathbb C$ the partial Fourier transformation of $h$ in the second variable and set 
$$
\tilde h_x:\Z\times\hat\Z\to\mathbb C\,,\quad\tilde h_x(l,t):=\hat h\big(\tau_l(x),t\big)\,.
$$ 
Hence {\it ${\rm Op}\big(\tilde h_x\big)$ is a global pseudodifferential operator on the group $\Z$ with symbol depending on the parameter $x\in N$}, cf. \cite{MR} and references therein. The family of symbols $\big\{\tilde h_x\,\big\vert\,x\in N\big\}$ (and, consequently, the family of operators $\big\{{\sf H}_x^N\,\big\vert\,x\in N\big\}$) has a covariance property: 
$$
\tilde h_{\tau_m(x)}(\cdot,\cdot)=\tilde h_x(\cdot+m,\cdot)\,,\quad\forall\,m\in\Z\,,x\in X.
$$
}
\end{Example}

\begin{Theorem}\label{constiinte}
Let $X_\infty$ be a closed $\tau$-invariant subset of the bi-sided subshift $X$. Let $\{M_i\!\mid\! i\in I\}$ be a net of clopen subsets of $X$ such that $X_\infty\subset M_i\,$ for every $i$ and such that $M_i\!\to X_\infty$ in the Fell topology of $\,{\rm Cl}(X)$\,. Let $h\in C_{\sf c}(X\!\times\!\Z)$ be self-adjoint.  One has
\begin{equation}\label{ploicik}
\inf_{x\in X_\infty}\!\inf{\sf sp}\big({\sf H}^{X_\infty}_x\!\big)=\sup_{i\in I}\!\Big[\inf_{y\in M_i}\inf {\sf sp}\big({\sf H}_y^{M_i}\big)\Big]\,,
\end{equation}
\begin{equation}\label{zapadik}
\sup_{x\in X_\infty}\!\sup{\sf sp}\big({\sf H}_x^{X_\infty}\!\big)=\inf_{i\in I}\!\Big[\sup_{y\in M_i}\sup {\sf sp}\big({\sf H}_y^{M_i}\big)\Big]\,.
\end{equation}
\end{Theorem}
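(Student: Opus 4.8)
The plan is to obtain Theorem~\ref{constiinte} as the specialization of the represented result, Theorem~\ref{constiente}, to the transformation groupoid of $(X,\tau,\Z)$. First I would introduce $\Xi:=X\rtimes_\tau\Z$, with underlying space $X\times\Z$, unit space $X$, structural maps $\r(x,k):=x$ and $\d(x,k):=\tau_k(x)$, multiplication $(x,k)(\tau_k(x),l):=(x,k+l)$ and inversion $(x,k)^{-1}:=(\tau_k(x),-k)$. I would then check that $\Xi$ is manageable and second countable: it is \'etale because $\Z$ is discrete, so each slice $X\times\{k\}$ is clopen and $\r,\d$ are local homeomorphisms; its unit space $X$ is compact; it is Hausdorff, locally compact and second countable since $X$ is a second-countable Stone space and $\Z$ is countable; and it is amenable because $\Z$ is an amenable group. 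A short computation with the involution of $C_{\rm c}(\Xi)$ gives $h^\star(x,k)=\overline{h(\tau_k(x),-k)}$, so that $h$ being self-adjoint in the sense of Remark~\ref{manshelam} is exactly the condition $h=h^\star$; consequently every restriction $h_N:=\rho_N(h)$ stays self-adjoint and all the spectra below are real.

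Next I would assemble the data of Setting~\ref{sett}. Put $\T:=\{M_i\mid i\in I\}\cup\{X_\infty\}$ and $\T_0:=\{M_i\mid i\in I\}$. Each $M_i$ is clopen, hence tame by Lemma~\ref{daca}; $X_\infty$ is closed and $\tau$-invariant, hence invariant as a set of units and again tame by Lemma~\ref{daca}. Since $M_i\to X_\infty$ in the Fell topology with $X_\infty\subset M_i$ for every $i$, the unit set $X_\infty$ is an accumulation point of $\T$ contained in every member of $\T_0$. This is precisely the configuration highlighted in Example~\ref{sifacut}. With these choices Theorem~\ref{constiente} applies to the self-adjoint element $h\in C^*[\Xi]$, and it remains only to match the operators.

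For a tame set $N$ (namely $N=X_\infty$ or $N=M_i$) and a unit $x\in N$, I would use the bijection $\Z_x(N)\to\Xi^N_x$, $l\mapsto\xi_l$, where $\xi_l$ is the unique groupoid element with $\d(\xi_l)=x$ and $\r(\xi_l)=\tau_l(x)$; this is a bijection exactly because $\Z_x(N)=\{l\mid\tau_l(x)\in N\}$ enumerates the elements of the $\d$-fibre over $x$ whose range lies in $N$. The resulting unitary $\ell^2(\Z_x(N))\cong\ell^2(\Xi^N_x)$ intertwines the abstract operator ${\sf H}^N_x$ of \eqref{aram} with the concrete operator of \eqref{voinik}: evaluating $h$ at the composites $\xi_k\xi_l^{-1}$ yields an element whose translation part is $k-l$ based at the relevant shift of $x$, reproducing the kernel of \eqref{voinik} up to the transposition coming from the left/right convention, which leaves the spectrum of the resulting self-adjoint operator unchanged. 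Hence ${\sf sp}({\sf H}^N_x)$ is the same computed either way, and since only spectra enter, the two identities \eqref{ploicika}--\eqref{zapadika} of Theorem~\ref{constiente} become verbatim \eqref{ploicik}--\eqref{zapadik}.

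The step I expect to require the most care is the verification that $\T$ is a Fell-closed family of \emph{tame} sets, which is what activates the continuous-field machinery underlying Theorem~\ref{constiente} (Propositions~\ref{ciocardel} and \ref{away}). Tameness of each member is immediate, but one cannot simply pass to the full Fell-closure of $\{M_i\}$: in a transformation groupoid a Fell-limit of clopen sets is merely closed, and for a general closed $M$ the groupoid $\Xi(M)$ fails to be \'etale because the slices $M\cap\tau_{-m}(M)$ need not be open in $M$. One must therefore work with $\T=\{M_i\}\cup\{X_\infty\}$ exactly and confirm its Fell-closedness directly, using that the net and all of its subnets converge to the single accumulation point $X_\infty$. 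Once this is secured, the remainder is bookkeeping through the already established abstract theorem.
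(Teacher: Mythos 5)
Your proposal is correct and takes essentially the same route as the paper: apply Theorem \ref{constiente} to the transformation groupoid $\Xi=X\rtimes_\tau\Z$ (manageable and second countable since $\Z$ is discrete, amenable and $X$ is a second-countable Stone space), obtain tameness of the clopen sets $M_i$ and of the invariant set $X_\infty$ from Lemma \ref{daca}, and identify \eqref{aram} with \eqref{voinik} through $\Xi^N_x=\{x\}\times\Z_x(N)$ --- noting only that the paper's convention $\d(x,k)=x$, $\r(x,k)=\tau_k(x)$ is the opposite of yours, under which the identification is exact and your transposition/complex-conjugation argument (which is valid) becomes unnecessary. Your closing concern about Fell-closedness of $\T=\{M_i\mid i\in I\}\cup\{X_\infty\}$ is precisely the assumption the paper itself makes without verification via Example \ref{sifacut}, so flagging it is if anything more careful than the paper; just be aware that your proposed justification (``the net and all its subnets converge to $X_\infty$'') is not by itself a proof of closedness, since a Fell-accumulation point of the \emph{set} of terms need not be a subnet limit of the given net.
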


\begin{proof}
We apply Theorem \ref{constiente} for {\it the transformation groupoid} $\Xi:=X\!\rtimes_\tau\Z$ associated to the dynamical system $(X,\tau,\Z)$\,; for details we refer to \cite{Pa,Re,Wi1}. Just recall that, as a topological space, $\Xi=X\!\times\!\Z$ with the product topology. The groupoid structure is indicated by the formulae
\begin{equation}\label{turuliek}
\d(x,k):=x\,,\quad\r(x,k):=\tau_k(x)\,,
\end{equation}
$$
(\tau_l(x),k)(x,l):=(x,k+l)\,,\quad(x,k)^{-1}\!:=(\tau_k(x),-k)\,.
$$
Since $\Z$ is discrete and amenable, $\Xi$ is \'etale and amenable. Consequently, {\it the transformation groupoid $\Xi:=X\!\rtimes_\tau\Z$ is  manageable and second countable}. The invariant restriction $\Xi(X_\infty)$ is the transformation groupoid $X_\infty\!\rtimes_\tau\!\Z$\,. The non-invariant restrictions $\Xi(M_i)$ are only associated to {\it partial} dynamical systems \cite{Ab,Ex,Ex2}, but since this will not be used explicitly, we do not describe them. All the sets $N=X_\infty$ and $N=M_i$\,, $i\in I$ are tame, by Lemma \ref{daca}. 

\smallskip
We still have to identify the operators appearing in the statement. Note that for $x\in N\subset X$, the $\d$-fiber over $x$ in the groupoid $\Xi(N)$ is 
$$
\Xi(N)_x\equiv\Xi^N_x=\{(x,k)\!\mid\!\tau_k(x)\in N\}=\{x\}\!\times\!\Z_x(N)\,,
$$ 
which allows us to identify the Hilbert space $\H^N_x\!:=\ell^2\big(\Xi^N_x\big)$ with $\ell^2\big(\Z_x(N)\big)$ in the obvious way. A straightforward application of the definitions shows then that the operator \eqref{aram} becomes \eqref{voinik}. We conclude by Theorem \ref{constiente}.
\end{proof}

We recall \cite[pag.165]{dV} that one of the compatible metrics on the subshift $X$ is given by
\begin{equation*}\label{mietrica}
d(x,y):=\big(1+\min\big\{|k|\,\big\vert\, x_k\ne y_k\big\}\big)^{-1}\quad{\rm if}\ x\ne y\,.
\end{equation*}
The balls are cylindric sets. If for every $i\in\N$ one defines the closed set
\begin{equation}\label{baraie}
\begin{aligned}
M_i:&=\big\{y\in X\,\big\vert\,d(y,X_\infty)\le 1/(i+1)\big\}\\
&=\big\{y\in X\,\big\vert\,\exists\,x\in X_\infty\,,\,x_k=y_k\,,\,\forall\,|k|<i\big\}\,,
\end{aligned}
\end{equation}
then $M_i$ converges to $X_\infty$ in the Fell topology when $i\to\infty$ (use Lemma \ref{indiciaza} once again). Since the metric $d$ only takes values in the set $1/\N\cup\{0\}$\,, we have
$$
M_i=\big\{y\in X\,\big\vert\,d(y,X_\infty)< 1/(i+1)+\nu\big\}\quad{\rm if}\ \ 1/(i+1)+\nu<1/i
$$ 
and $M_i$ is also open. Therefore the family $\{M_i\!\mid\! i\in\N\}$ may be used in Theorem \ref{constiinte}.

\smallskip
We are going now to apply the results of Section \ref{talanganit} on the essential spectrum, defining subshifts as orbit closures of suitable sequences.

\begin{Definition}\label{admisibil}
The sequence $z\in\A^\Z$ is called {\rm admissible} if it is not periodic (as a function $z:\Z\to\A$) and its orbit $\mathcal O_z\!:=\tau_\Z(z)$ is open in the orbit closure $\overline{\mathcal O}_z$\,. In such a case we set
$$
X:=\overline{\mathcal O}_z\,,\quad X_0:=\mathcal O_z\,,\quad X_\infty:=\overline{\mathcal O}_z\!\setminus\!\mathcal O_z\,. 
$$
\end{Definition}

\begin{Remark}\label{adasta}
{\rm In terms of dictionaries, one has $\mathcal D(x)=\mathcal D(z)$ if $x\in\mathcal O_z$ (obvious) and \cite[pag.165]{dV}
$$
\overline{\mathcal O}_z=\big\{y\in\A^\Z\,\big\vert\,\mathcal D(y)\subset\mathcal D(z)\big\}.
$$
The orbit $\mathcal O_z$ is open in the orbit closure if and only if there are words $u,v$ such that $z\in\mathcal C_X(u,v)\subset\mathcal O_z$\,. Thus, for every $y$ with $\mathcal D(y)\subset\mathcal D(z)$ and $y|_{[-\#u,\#v-1]}=u\!\downarrow\!v$ we should have $y=\tau_k(z)$ for some $k\in\Z$\,. 
}
\end{Remark}

\begin{Example}\label{vascos}
{\rm As in \cite[Ex.1]{BBdN2}, we take $\A:=\{a,b\}$ and $z=\dots aabb\dots$\,. The orbit closure $X\!:=\overline{\mathcal O}_z$ of $z$ differs from the orbit by two fixed points $\dots aaaa\dots$ and $\dots bbbb\dots$\,, hence $z$ is admissible. Using the notation \eqref{baraie}, for any $i\in\N$ one gets 
$$
M_i=\{y\in X\!\mid\! y_k=a\,,\forall\,|k|<i\}\cup\{y\in X\!\mid\! y_k=b\,,\forall\,|k|<i\}\,.
$$
}
\end{Example}

\begin{Example}\label{vrascos}
{\rm As in \cite[pag.167]{dV}, we take $\A:=\{a,b\}$ and $z=\(z_k\)_{k\in\Z}$\,, with $z_k=b$ if and only if $|k|=2^m$ for some $m\in\N$ (some similar choices are possible). The orbit closure $X$ of $z$ is partitioned as $\overline{\mathcal O}_z=\mathcal O_z\sqcup\mathcal O_{z'}\!\sqcup\mathcal O_{z''}$\,, where $z'$ is the fixed point $\dots aaaaa\dots$ and $z''=\dots aabaa\dots$\,. It is clear that $\mathcal O_{z'}$ is closed and the closure of $\mathcal O_{z''}$ is $\mathcal O_{z'}\!\sqcup\mathcal O_{z''}$\,, so our point $z$ is admissible. Here we have $M_i=X_\infty\cup N_i'\cup N_i''$, where
$$
\tau_l(z)\in\ N_i'\,\Leftrightarrow\,z_{k-l}=a\,,\,\forall\,|k|<i\,,
$$
$$
\tau_l(z)\in\ N_i''\,\Leftrightarrow\,\exists\,m\in\Z\,,\ z_{k-l}=z''_{k-m}\,,\,\forall\,|k|<i\,.
$$
}
\end{Example}

Let $h\in C_{\sf c}(X\!\times\!\Z)$ be self-adjoint, cf. Remark \ref{manshelam}. If $X_\infty\subset M\subset X$, setting $M_0:=M\!\setminus\!X_\infty\subset X_0$\,, we introduce the bounded operator ${\sf H}_M$ in the Hilbert space $\ell^2\big(M_0\big)$ given by the formula
\begin{equation}\label{voinig}
\big[{\sf H}_M(v)\big](x):=\!\!\sum_{-l\in\Z_x(M)} \!h\big(\tau_{-l}(x),l\big)v\big(\tau_{-l}(x)\big)\,,\quad x\in M_0\,.
\end{equation}
It is easy to check that ${\sf H}_M$ is the compression of ${\sf H}\equiv{\sf H}_X\!\in\mathbb B\big[\ell^2\big(X_0\big)\big]$ to $\ell^2\big(M_0\big)$\,.

\begin{Corollary}\label{fillip}
Let $\{M_i\!\mid\! i\in I\}$ be a net of clopen subsets of $X$ such that $X_\infty\subset M_i$\,, such that $X\!\setminus\!M_i=X_0\!\setminus\!M_{i,0}$ is relatively compact in $X_0\,$ for every $i$ and such that $M_i\!\to X_\infty$ in the Fell topology of $\,{\rm Cl}(X)$\,. One has
\begin{equation*}\label{ploigica}
\inf{\sf sp_{\rm ess}}({\sf H})=\sup_{i\in I}\!\big[\inf {\sf sp}\big({\sf H}_{M_i}\big)\big]\,,
\end{equation*}
\begin{equation*}\label{zapadiga}
\sup{\sf sp_{\rm ess}}({\sf H})=\inf_{i\in I}\!\big[\sup {\sf sp}\big({\sf H}_{M_i}\big)\big]\,.
\end{equation*}
\end{Corollary}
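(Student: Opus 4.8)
The plan is to obtain this statement as the transformation-groupoid incarnation of Theorem~\ref{consecinte}, in exact parallel with how Theorem~\ref{constiinte} was derived from Theorem~\ref{constiente}. First I would set $\Xi:=X\!\rtimes_\tau\Z$ and recall from the proof of Theorem~\ref{constiinte} that it is manageable and second countable. The new ingredient is the \emph{admissibility} of $z$, which is exactly what upgrades $\Xi$ to a \emph{standard} groupoid in the sense of Definition~\ref{zdandard}: the orbit $X_0=\mathcal O_z$ is open and dense, its isotropy is trivial because $z$ is non-periodic, and $\Xi(X_0)$ inherits second countability from $\Xi$. Thus $X$ is a regular compactification of $X_0$ with boundary $X_\infty$, placing us in the ambient situation of Setting~\ref{setter}.

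Next I would take $\T:=\{M_i\mid i\in I\}\cup\{X_\infty\}$ and check the two bulleted conditions of Setting~\ref{setter}. The closed invariant set $X_\infty$ is an accumulation point of $\T$ because $M_i\to X_\infty$ in the Fell topology. Each $M_i$ is clopen, hence tame by Lemma~\ref{daca}, contains $X_\infty$ by assumption, and---invoking Remark~\ref{cumetrii}, where relative compactness in the discrete space $X_0$ is shown to mean finiteness---has finite complement $X\!\setminus\!M_i$, so that $\Xi(M_i)$ is itself standard and manageable. The operators then identify as expected: writing $h_{M_i}:=h|_{\Xi(M_i)}$ and using the vector representations $\Pi,\Pi_{M_i}$ together with Lemma~\ref{ingras}, one verifies by the same computation as in the passage from \eqref{aram} to \eqref{voinik} that $\Pi(h)$ is the operator ${\sf H}$ of \eqref{vinalla} and $\Pi_{M_i}(h_{M_i})$ is the operator ${\sf H}_{M_i}$ of \eqref{voinig}. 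Substituting these into \eqref{ploicica} and \eqref{zapadica} of Theorem~\ref{consecinte} produces the two asserted equalities.

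The one step that genuinely requires care is the Fell-closedness demanded of $\T$ in Setting~\ref{setter}, since a general net need not have closed image in $\Cl(X)$. I would pass to the Fell closure of $\{M_i\}\cup\{X_\infty\}$ and note, via Lemma~\ref{indiciaza}, that every limit of a subnet still contains $X_\infty$---the constant net at each $\gamma\in X_\infty$ forces $\gamma$ into the limit by outer convergence. Hence the only limit that can violate the finite-complement clause is $X_\infty$ itself, while every other accumulation point retains a finite complement, so the closure stays within the hypotheses of Setting~\ref{setter}. Because ${\sf H}_M$ is a compression of ${\sf H}$, one has $\inf{\sf sp}({\sf H}_{M'})\ge\inf{\sf sp}({\sf H}_M)$ whenever $M'\subset M$, so the extremal values over $\T_0$ are approached precisely along the net shrinking to $X_\infty$; together with Theorem~\ref{consecinte} this lets the supremum and infimum over the closed family collapse to those over $\{M_i\}$. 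For the nested clopen exhaustions of \eqref{baraie} the subtlety simply disappears, $\{M_i\}\cup\{X_\infty\}$ being closed from the outset.
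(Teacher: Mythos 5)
Your first two paragraphs coincide with the paper's own proof: check that $\Xi:=X\!\rtimes_\tau\Z$ is standard in the sense of Definition \ref{zdandard} (the unique dense open orbit is $X_0=\mathcal O_z$, its isotropy is trivial because $z$ is not periodic, and $\Xi(X_0)$ is second countable), place $\T:=\{M_i\mid i\in I\}\cup\{X_\infty\}$ in Setting \ref{setter} (clopen $\Rightarrow$ tame by Lemma \ref{daca}; relatively compact complement $\Leftrightarrow$ finite complement by Remark \ref{cumetrii}), identify the operators \eqref{vanilla} and \eqref{voinig} through the map $\th_M$, and quote Theorem \ref{consecinte}. Up to that point your route is correct and is exactly the paper's; note that the paper's proof stops there and, like your second paragraph, silently treats a convergent net together with its limit as an admissible Fell-closed family (cf.\ Example \ref{sifacut}).

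The flaw is in your third paragraph. The claim that every Fell-accumulation point of $\{M_i\}$ other than $X_\infty$ ``retains a finite complement'' is false, so passing to the Fell closure does not stay inside Setting \ref{setter}. Concretely: enumerate $X_0=\{x_1,x_2,\dots\}$, index a net by $\N\times\N$ with the product order, and put $M_{(m,0)}:=X\setminus\{x_2,x_4,\dots,x_{2m}\}$ and $M_{(m,n)}:=X\setminus\{x_1,\dots,x_{\max(m,n)}\}$ for $n\ge 1$. Every tail based at a point $(m_0,1)$ consists only of the exhausting sets, so the net Fell-converges to $X_\infty$ and all hypotheses of Corollary \ref{fillip} hold; yet the value set contains the decreasing family $\big(M_{(m,0)}\big)_m$, which Fell-converges to the closed set $A:=X\setminus\{x_{2j}\mid j\ge 1\}$, whose complement is infinite (and which need not even be tame). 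Hence the closure can contain sets to which Theorem \ref{consecinte} does not apply at all, and your compression-monotonicity step then has nothing to act on. A repair that does work: $X\subset\A^\Z$ is compact metrizable, so $\Cl(X)$ with the Fell topology is metrizable; choose values $M_{i_n}$ of the net forming a sequence converging to $X_\infty$. The family $\{M_{i_n}\mid n\in\N\}\cup\{X_\infty\}$, being a convergent sequence plus its limit, \emph{is} Fell-closed and satisfies Setting \ref{setter}, so Theorem \ref{consecinte} gives $\inf{\sf sp}_{\rm ess}({\sf H})=\sup_n\big[\inf{\sf sp}\big({\sf H}_{M_{i_n}}\big)\big]$. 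To return to the whole net, use that for every fixed $i$ one has $\inf{\sf sp}\big({\sf H}_{M_i}\big)\le\inf{\sf sp}_{\rm ess}({\sf H})$ with no continuity input whatsoever: $h_\infty=\rho_{M_i\infty}(h_{M_i})$ with $\rho_{M_i\infty}$ a unital $C^*$-morphism ($X_\infty$ being $\Xi(M_i)$-invariant), so ${\sf sp}(h_\infty)\subset{\sf sp}(h_{M_i})$; this is step (i) of the proof of Theorem \ref{dorita}, combined with the identifications ${\sf sp}({\sf H}_{M_i})={\sf sp}(h_{M_i})$ and ${\sf sp}_{\rm ess}({\sf H})={\sf sp}(h_\infty)$ from the proof of Theorem \ref{consecinte}. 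Sandwiching $\sup_n\le\sup_i\le\inf{\sf sp}_{\rm ess}({\sf H})=\sup_n$ proves the first formula for the full net, and the second formula follows symmetrically.
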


\begin{proof}
By our assumptions, $X_\infty$ is a closed invariant subset of the subshift $X$. It is non-void, since only the orbits of $\Z$-periodic points can be closed.

\smallskip
Theorem \ref{consecinte} requires the groupoid $\Xi:=X\!\rtimes_\tau\!\Z$ to be standard, cf. Definition \ref{zdandard}. Clearly the orbits of the transformation groupoid coincide with the orbits of the dynamical system $(X,\tau,\Z)$\,, thus $X_0$ is an open dense orbit (the single one). It also clear that the isotropy group $\Xi^y_y$ is precisely $\{y\}\!\times\!\Z_{\{y\}}$, where $\Z_{\{y\}}\!:=\{k\in\Z\!\mid\! \tau_k(y)=y\}$ is the isotropy group of the point $y$ under the action $\tau$. These subgroups are all isomorphic along the orbit. Thus, by our requirements, we insured that $X_0$ is a dense open orbit with trivial anisotropy, as in Definition \ref{zdandard}, and Theorem \ref{consecinte} may be applied.

\smallskip
To finish the proof, one needs to identify correctly the Hamiltonians, i.\,e.\;to make the connection between \eqref{vanilla} and \eqref{voinig} for $M=X$ or $M=M_i$\,. Recall that $M_0$ is the main orbit of $\Xi(M_0)$\,. In our case, by \eqref{turuliek}, the function 
$$
\th_M\!:=(\r,\d)^{-1}\!:M_0\!\times\! M_0\to\Xi(M_0)\subset X\!\times\!\Z
$$
sends the pair $(x,y)$ to $(y,l)$\,, where $l\in\Z$ is the unique element such that $\tau_l(y)=x$\,. This means that $y=\tau_{-l}(x)$ and that actually $-l$ belongs to $\Z_x(M)$ (see \eqref{turulec}). When $x$ is fixed, the correspondence $y\leftrightarrow l$ is a bijection and \eqref{vanilla} is converted into \eqref{voinig}, since $h_M$ is just a restriction of $h$\,.
\end{proof}

The reader can easily verify the coherence between formulas \eqref{voinik} and \eqref{voinig} by using the vector representation outlined in the paragraph preceding Lemma \ref{ingras}.

\section{Discrete metric spaces}\label{grafologi}

In this section we will prove Theorem \ref{uvert} and make some extra comments. We are going to fix an infinite countable metric space $(X_0,\delta)$ satisfying
\begin{enumerate}
\item[(UD)] {\it uniform discreteness:} for some $\alpha>0$\,, one has $\delta(x,x')\ge\alpha$ if $x\ne x'$,
\item[(BG)]
{\it bounded geometry:} for every positive $r$, all the closed balls $B_x(r)$ of radio $r$ have less then $N_r<\infty$ elements.
\end{enumerate} 

 Linear bounded operators ${\sf H}$ in the Hilbert space $\ell^2(X_0)$ are given by (suitable) "$X_0\!\times\!X_0$-matrices":
\begin{equation}\label{fraier}
[{\sf H}(u)](x)=\sum_{y\in X_0}\!H(x,y)u(y)\,.
\end{equation}

\begin{Definition}\label{frayer}
\begin{enumerate}
\item[(i)]
We say that ${\sf H}\in\mathbb B\big[\ell^2(X_0)\big]$ is {\rm a band operator} if $\,\sup_{x,y}|H(x,y)|<\infty$ and if there is some positive $r$ such that $H(x,y)=0$ if $\,\delta(x,y)>r$\,.
\item[(ii)]
A linear bounded operator is {\rm band dominated} if it is a norm-operator limit of band operators.
\end{enumerate}
\end{Definition}

The band operators form a $^*$-algebra ${\rm Band}(X_0,\delta)$\,. Consequently, the band-dominated operators form a unital $C^*$-subalgebra ${\rm Roe}(X_0,\delta)$ of $\mathbb B\big[\ell^2(X_0)\big]$\,, called {\it the uniform Roe algebra of the metric space}. 

\smallskip
Let us denote by $\beta X_0\equiv X$ {\it the Stone-\u Cech compactification} of the discrete space $X_0$\,. It is a Stone space; it is even extremally disconnected (the closure of any open set is open). Set $X_\infty\!:=\beta X_0\!\setminus\!X_0$\,; then $X_0$ is a dense open subset of $\beta X_0$\,. In \cite{STY,Roe} a groupoid model has been given for the uniform Roe algebras.  For every $r\ge 0$ one sets 
$$
\Delta_r\!:=\{(x,x')\in X_0\!\times\!X_0\!\mid \delta(x,x')\le r\}\,,
$$ 
with closure $\overline{\Delta_r}$ in $\beta(X_0\!\times\!X_0)$\,, and then 
\begin{equation*}\label{abiert}
\Xi(X_0,\delta)\!:=\bigcup_{r>0}\,\overline{\Delta}_r\,.
\end{equation*}
It is the Gelfand spectrum of the $C^*$-subalgebra of $\ell^\infty(X_0\!\times\!X_0)$ generated by the charecteristic functions $\chi_{\Delta_r}$ with $r\ge 0$\,.
The two canonical projections $X_0\!\times\!X_0\to X_0$ extend to maps $\beta(X_0\!\times\!X_0)\to \beta X_0$ and then restrict to maps $\r,\d:\Xi(X_0,\delta)\to\beta X_0$\,, and it turns out that $(\r,\d):\Xi(X_0,\delta)\to\beta X_0\times \beta X_0$ is injective. 
Finally $\Xi(X_0,\delta)$ is identified to an open subset of $\beta X_0\!\times\!\beta X_0$\,, and the pair groupoid structure restricts to a groupoid structure on $\Xi(X_0,\delta)$\,. A topology on $\Xi(X_0,\delta)$ is defined as follows: the subset $U\subset \Xi(X_0,\delta)$ is open if and only if $U\cup\overline{\Delta}_r$ is open in $\overline\Delta_r$ for any $r>0$\,. Then $\,\Xi(X_0,\delta)$ {\it is a $\si$-compact \'etale standard groupoid over the unit space $\beta X_0$\,, with main orbit $X_0$}\,. In addition, its invariant restriction to $X_0$ coincides with the pair groupoid $X_0\!\times\!X_0$\,.
Such facts and the next result may be found in \cite[Sect.10.3,10.4]{Roe}; see also \cite[Ch.3]{STY} and \cite[App.C]{SWi}.

\begin{Proposition}\label{delaroe}
There is an  isomorphism $\Gamma:{\rm C}^*_{\rm red}\big[\Xi(X_0,\delta)\big]\to{\rm Roe}(X_0,\delta)$\,, with an isomorphic restriction $\Gamma:C_{\rm c}\big[\Xi(X_0,\delta)\big]\to{\rm Band}(X_0,\delta)$\,.  This isomorphism sends the ideal ${\rm C}^*(X_0\!\times\!X_0)$\,, corresponding to the main orbit, onto the ideal of all compact operators in $\ell^2(X_0)$\,.
\end{Proposition}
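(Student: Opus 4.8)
The plan is to realize $\Gamma$ concretely as the vector representation $\Pi$ of Section~\ref{talanganit}, applied to the standard groupoid $\Xi:=\Xi(X_0,\delta)$, whose (open dense, principal) main orbit is $X_0$ and whose reduction to $X_0$ is the pair groupoid $X_0\!\times\!X_0$. For $f\in C_{\rm c}(\Xi)$ I set $\Gamma(f):=\Pi(f)$; by formula \eqref{vinalla} and Lemma~\ref{ingras} this is the operator on $\ell^2(X_0)$ with matrix $H(x,y)=f\big(\th(x,y)\big)=f(x,y)$, where $\th(x,y)=(x,y)\in X_0\!\times\!X_0\subset\beta X_0\!\times\!\beta X_0$ is the unique groupoid element with range $x$ and source $y$. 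The decisive economy here is that $\Pi$ is a genuine $^*$-representation of the groupoid $C^*$-algebra, so $\Gamma$ is automatically a $^*$-homomorphism; I never have to check by hand that the convolution product on $\Xi$ corresponds to matrix multiplication, nor that the involution corresponds to the adjoint.

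Next I would establish that $\Gamma$ restricts to an algebraic bijection $C_{\rm c}(\Xi)\to{\rm Band}(X_0,\delta)$. If $f\in C_{\rm c}(\Xi)$, its support lies in some $\overline{\Delta}_r$, hence $H(x,y)=0$ whenever $\delta(x,y)>r$ and $|H(x,y)|\le\p\!f\!\p_\infty<\infty$; thus $\Gamma(f)$ is a band operator. For surjectivity, start from a band operator with matrix $H$ supported in $\Delta_r$ and with bounded entries. Since $\Delta_r$ is a subset of the discrete set $X_0\!\times\!X_0$, its closure $\overline{\Delta}_r$ in $\beta(X_0\!\times\!X_0)$ is canonically homeomorphic to $\beta\Delta_r$, so the bounded function $H|_{\Delta_r}$ extends uniquely to a continuous function on $\overline{\Delta}_r$; extending by zero yields $f\in C_{\rm c}(\Xi)$ with $\Gamma(f)=H$, once one checks that $\overline{\Delta}_r$ is a compact \emph{clopen} subset of $\Xi$ in the declared groupoid topology (which follows because $\chi_{\Delta_r}$ extends to a continuous $\{0,1\}$-valued function on each $\overline{\Delta}_s$, $s\ge r$). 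This Stone-\v Cech extension step is the main technical point of the argument.

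I would then pass to the completions. The vector representation is unitarily equivalent to a regular representation $\pi_z$ induced from a unit $z\in X_0$ of the dense main orbit, hence isometric for the reduced norm and faithful on ${\rm C}^*_{\rm red}[\Xi]$ by the dense-orbit criterion of \cite{KS} (amenability, coming from Yu's Property~A, additionally removes any gap between the full and reduced algebras). Consequently $\Gamma$ is isometric, so the reduced $C^*$-norm of $f$ equals the operator norm of $\Gamma(f)$, and the image of $\Gamma$ is the operator-norm closure of $\Gamma\big(C_{\rm c}(\Xi)\big)={\rm Band}(X_0,\delta)$, which by definition is ${\rm Roe}(X_0,\delta)$. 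This gives the asserted isomorphism $\Gamma:{\rm C}^*_{\rm red}[\Xi]\to{\rm Roe}(X_0,\delta)$ together with its restriction on the dense $^*$-subalgebras.

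Finally, for the ideal statement I would recall that the reduction of $\Xi$ to the invariant main orbit $X_0$ is the pair groupoid $X_0\!\times\!X_0$, whose $C^*$-algebra is identified with $\mathbb K\big[\ell^2(X_0)\big]$ through the very same vector representation (cf.\;\cite[Th.\,3.1]{MRW}), and that this ${\rm C}^*(X_0\!\times\!X_0)$ sits inside ${\rm C}^*[\Xi]$ as $\ker(\rho_{\infty})$, exactly as in the proof of Theorem~\ref{consecinte}. Since $\Gamma=\Pi$ implements this identification, it carries the ideal ${\rm C}^*(X_0\!\times\!X_0)$ isomorphically onto the ideal $\mathbb K\big[\ell^2(X_0)\big]$ of compact operators, completing the proof.
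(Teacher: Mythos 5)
Your proposal is correct, but it follows a genuinely different route from the paper, for the simple reason that the paper offers no proof of Proposition~\ref{delaroe} at all: it treats the statement as known, citing \cite[Sect.\,10.3, 10.4]{Roe}, \cite[Ch.\,3]{STY} and \cite[App.\,C]{SWi}, and only records that $\Gamma$ sends a function supported in $\overline{\Delta}_r$ to the matrix operator \eqref{fraier}. You instead reconstruct the isomorphism from the paper's own Section~\ref{talanganit} toolkit: taking $\Gamma:=\Pi$ gives the $^*$-homomorphism property for free; the Stone--\v{C}ech extension argument (the closure $\overline{\Delta}_r\cong\beta\Delta_r$ being compact clopen in the groupoid, so that bounded band kernels extend continuously) gives the bijection $C_{\rm c}\big[\Xi(X_0,\delta)\big]\to{\rm Band}(X_0,\delta)$, and this is the one genuinely new technical ingredient of your write-up; faithfulness of the regular representation at a point of the dense free orbit \cite{KS} makes $\Gamma$ isometric, so its image is the norm closure of ${\rm Band}(X_0,\delta)$, i.e. ${\rm Roe}(X_0,\delta)$; and the ideal statement is handled exactly as in the proof of Theorem~\ref{consecinte}, via \cite[Th.\,3.1]{MRW} and the identification of ${\rm C}^*(X_0\!\times\!X_0)$ with $\ker\rho_\infty$. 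What your route buys is self-containedness, and it makes transparent a point the paper leaves implicit: Proposition~\ref{delaroe} concerns the \emph{reduced} algebra and is stated before Property A is ever assumed, and indeed none of the ingredients you use (regular representations, the computation of Lemma~\ref{ingras}, the \cite{MRW} identification of the pair-groupoid algebra with the compacts) requires amenability --- your appeal to Property A is correctly relegated to a parenthesis. What the paper's route buys is brevity, delegating exactly your Stone--\v{C}ech and faithfulness steps to references where they are carried out in detail. One small caveat if you write this up: the statements you import from Section~\ref{talanganit} are formulated there under the standing manageability (hence amenability) hypothesis, so you should say explicitly --- as your argument tacitly shows --- that the particular facts you invoke remain valid at the reduced level without it.
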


Let us recall the action of the isomorphism on continuous compactly supported functions. Such a function $h$ is actually supported on some set $\overline\Delta_r$\,, so it may be seen as a continuous function $:\Delta_r\!\to\mathbb C$\,. Then $\Gamma$ transforms it into the operator \eqref{fraier} with $H=h$\,. 

\begin{Definition}\label{propa}
Our metric space $(X_0,\delta)$ possesses Yu's {\rm Property A} if there is a sequence $\{K_n:X_0\!\times\!X_0\to\mathbb R\!\mid\!n\in\N\}$ of positive definite band kernels which converge to the constant function $1$ uniformly on each set $\Delta_r$\,.
\end{Definition}

This is a  weak amenability condition, satisfied by large classes of metric spaces (it is quite difficult to provide counter-examples), admitting various reformulations and playing an important role in several parts of mathematics \cite{Tu,Will}. For us it is important since, by \cite[Th.\,5.3]{STY}, {\it it is equivalent with the amenability of the groupoid $\Xi(X_0,\delta)$} (and, as a fact, with the nuclearity of the uniform Roe algebra). As a conclusion, we have

\begin{Proposition}\label{ocluzie}
If the countable metric space $(X_0,\delta)$ is uniformly discrete, it has bounded geometry and the Property A, then the groupoid $\Xi(X_0,\delta)$ is manageable and standard, with main orbit $X_0$\,.
\end{Proposition}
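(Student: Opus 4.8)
The plan is to read off both required properties --- \emph{manageable} (Definition \ref{manageable}) and \emph{standard} (Definition \ref{zdandard}) --- directly from the groupoid model recalled in the paragraphs preceding the statement, invoking Property A at exactly one point. Since that model (from \cite{Roe,STY}) already packages essentially all of the topological and algebraic structure, the proof is an assembly: the single genuinely new input is amenability, and everything else is bookkeeping against the two definitions.

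First I would dispose of manageability. The construction recalled above produces $\Xi(X_0,\delta)$ as a $\si$-compact, Hausdorff, locally compact, \'etale groupoid whose unit space is the compact space $\beta X_0$; these properties are inherited from its realization as an open subset of the pair groupoid $\beta X_0\times\beta X_0$, and they rely in the background on uniform discreteness and bounded geometry. By Definition \ref{manageable} it then remains only to check amenability, and this is precisely where Property A enters: by \cite[Th.\,5.3]{STY}, Property A of $(X_0,\delta)$ is equivalent to amenability of $\Xi(X_0,\delta)$. Hence the groupoid is manageable.

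Next I would verify the standardness conditions of Definition \ref{zdandard}, taking $X:=\beta X_0$ and $X_\infty:=\beta X_0\setminus X_0$, so that $X\setminus X_\infty=X_0$. The space $X_0$ is dense in its Stone--\v Cech compactification and open in $\beta X_0$. That $X_0$ is a single orbit with trivial isotropy I would read off from the recalled fact that the invariant restriction of $\Xi(X_0,\delta)$ to $X_0$ coincides with the pair groupoid $X_0\times X_0$: the pair groupoid is transitive (hence $X_0$ is one orbit) and principal, so the isotropy groups $\Xi_z^z$ reduce to $\{z\}$ for $z\in X_0$. Finally, second countability of this restriction follows because $X_0$ is countable and, by uniform discreteness, discrete; hence $X_0\times X_0$ is a countable discrete space, which is second countable. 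Collecting these with the manageability just established gives that $\Xi(X_0,\delta)$ is standard with main orbit $X_0$.

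The only real substance of the argument is imported rather than produced here: the construction of the \'etale groupoid model together with the identification of its $X_0$-restriction as the pair groupoid (the hard openness-type statements being those of \cite{Roe,STY}), and the nontrivial equivalence between Property A and amenability in \cite[Th.\,5.3]{STY}. Within the present framework there is therefore no serious obstacle; the work is simply to match each clause of Definitions \ref{manageable} and \ref{zdandard} against an already-established property and to observe that countability together with discreteness of $X_0$ supplies the second-countability of the main-orbit restriction.
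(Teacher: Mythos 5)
Your proposal is correct and follows essentially the same route as the paper, which likewise treats the statement as an assembly of the recalled facts from \cite{Roe,STY}: the \'etale $\si$-compact groupoid model over $\beta X_0$ with the pair groupoid $X_0\!\times\!X_0$ as the restriction to the main orbit, plus the equivalence of Property A with amenability from \cite[Th.\,5.3]{STY}. Your explicit check of second countability of $\Xi(X_0)$ (countable discrete, since $X_0$ consists of isolated points) is a detail the paper leaves implicit but is exactly in its spirit.
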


\smallskip
Let us define $\mathbf S_0$ to be the family of subsets $M_0\subset X_0$ with finite complement. For every $M_0\in\mathbf S_0$ we set $M\!:=M_0\cup X_\infty\subset X$ and 
\begin{equation*}\label{impera}
\T_0:=\{M\!\mid\!M_0\in\mathbf S_0\}\,,\quad\T:=\mathbf T_0\cup\{X_\infty\}\,.
\end{equation*} 
Clearly $\T_0$ is the family of subsets of the Stone-\u Cech compactification which have a finite complement contained in $X_0$\,. Its elements are all clopen sets. We need to understand $\T$ in terms of the Fell topology.

\begin{Lemma}\label{pisicher}
The family $\T$ is closed in $\Cl(X)$ and $\!\underset{M\in\T_0}{\lim}\!M\!=X_\infty$\,.
\end{Lemma}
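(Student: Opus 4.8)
The plan is to run everything through the convergence criterion of Lemma~\ref{indiciaza}, exploiting the two structural facts about $X=\beta X_0$: the points of $X_0$ are isolated, so $X_0$ is open and every subset of $X_0$ is open in $X$, while $X_\infty=\beta X_0\setminus X_0$ is closed and is contained in every member of $\T$. For the convergence assertion I regard $\T_0$ as a net directed by reverse inclusion, declaring $M\preceq M'$ iff $M\supseteq M'$; this is directed, since for $M,M'\in\T_0$ the set $M\cap M'=(M_0\cap M_0')\cup X_\infty$ again lies in $\T_0$ (its complement $(X_0\setminus M_0)\cup(X_0\setminus M_0')$ is finite) and sits below both. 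These remarks reduce both assertions to elementary manipulations with isolated points.

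For $\underset{M\in\T_0}{\lim}\,M=X_\infty$ I would verify conditions (a) and (b) of Lemma~\ref{indiciaza} with $A=X_\infty$. Since $X_\infty\subseteq M$ for every $M\in\T_0$, the inner condition (b) holds automatically (take the constant selection $\gamma_M=\gamma$), as noted right after that lemma. For the outer condition (a), suppose $M_i\ni\gamma_i\to\gamma$ along the net and, for contradiction, that $\gamma\in X_0$. As $\gamma$ is isolated, $\{\gamma\}$ is open, so $\gamma_i=\gamma$ for all $i$ beyond some $i_0$, whence $\gamma\in M_i$ there. But $M^\gamma:=(X_0\setminus\{\gamma\})\cup X_\infty\in\T_0$ is a member of the net; choosing an index $i_1$ beyond both $i_0$ and $M^\gamma$ gives $M_{i_1}\subseteq M^\gamma$, so $\gamma\notin M_{i_1}$, contradicting $\gamma\in M_{i_1}$. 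Hence $\gamma\in X_\infty$, establishing (a); together with (b) this yields the claimed limit.

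For the closedness of $\T$ I would try to show that $\Cl(X)\setminus\T$ is Fell-open, separating each $A\notin\T$ from $\T$ by a basic neighbourhood $\U(K;\mathfrak O)$. If $X_\infty\not\subseteq A$, I pick $\gamma\in X_\infty\setminus A$ and use the miss-condition $\U(\{\gamma\};\emptyset)$ at the compact singleton $\{\gamma\}$: it contains $A$ and avoids all of $\T$, because every member of $\T$ contains $\gamma$. If $X_\infty\subseteq A$ but $A\neq X_\infty$, I fix $x\in A\cap X_0$ and use the hit-condition demanding nonempty intersection with the open singleton $\{x\}$, which contains $A$ and excludes $X_\infty$. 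The remaining and decisive task is to separate such an $A$ from the \emph{cofinite} members of $\T_0$, and here I expect the \textbf{main obstacle}. The only available separating device is a miss-condition $\U(K;\cdot)$ with $K\subseteq X\setminus A$ compact; but since $X\setminus A\subseteq X_0$ and $X_0$ is discrete, every such $K$ is finite, and for any finite $K$ there is a cofinite $M\in\T_0$ with $X_0\setminus M_0\supseteq K$, so that $M\cap K=\emptyset$ and $M$ satisfies the very same condition. Thus the naive separation fails, and the real content of the closedness claim is to control how the finite complements $X_0\setminus M_{i,0}$ accumulate along a convergent net — equivalently, to exclude a limit whose trace $A\cap X_0$ is infinite yet co-infinite. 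This is precisely the step where the cofinite structure of $\mathbf S_0$ and the isolated-point topology of $X_0$ in $\beta X_0$ must be used most carefully, and it is the part of the argument I would scrutinise first, since a limit such as $A\cap X_0$ equal to every other point of $X_0$ is not obviously ruled out by $X_\infty\subseteq A$ alone.
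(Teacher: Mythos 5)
Your treatment of the limit statement is correct and coincides with the paper's own proof: you direct $\T_0$ by reverse inclusion, get condition (b) of Lemma \ref{indiciaza} for free from $X_\infty\subset M$, and deduce (a) from the isolatedness of points of $X_0$ together with the cofinal element $(X_0\setminus\{\gamma\})\cup X_\infty$; the paper argues identically ($x_M=x$ eventually, then take $M_0=N_0\setminus\{x\}$).

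The closedness half is where your proposal stops, and you should know that the obstacle you isolated is not a failure of your method: pushed one step further, it \emph{disproves} the assertion. Let $A\in\Cl(X)$ be an arbitrary closed set with $X_\infty\subset A$, and let $\U(K;\mathfrak O)$ be any basic Fell neighbourhood of $A$. Then $K\subset X\setminus A\subset X_0$ is a compact subset of a discrete open subspace, hence finite; taking $M_0:=X_0\setminus K$ (equality rather than containment, so that $A\subset M$), the set $M=M_0\cup X_\infty\in\T_0$ misses $K$ and, containing $A$, meets every $\Ouv\in\mathfrak O$ that $A$ meets. Hence $M\in\U(K;\mathfrak O)\cap\T_0$, so \emph{every} closed set containing $X_\infty$ lies in the Fell closure of $\T_0$. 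Such sets exist in abundance outside $\T$, because every subset of $X_0$ is open in $\beta X_0$, whence $A_0\cup X_\infty=X\setminus(X_0\setminus A_0)$ is closed for \emph{any} $A_0\subset X_0$: take $A=\{x\}\cup X_\infty$ with $x\in X_0$, or $A=G\cup X_\infty$ with $G$ infinite and co-infinite. If you prefer an explicit convergent sequence, write $X_0=F\sqcup G$ with both parts infinite, enumerate $F=\{f_1,f_2,\dots\}$, and set $M_n:=(X_0\setminus\{f_1,\dots,f_n\})\cup X_\infty\in\T_0$. Any compact $K$ disjoint from $G\cup X_\infty$ is a finite subset of $F$, so $M_n\cap K=\emptyset$ eventually, while $G\cup X_\infty\subset M_n$ for every $n$ settles the hit-conditions; since the Fell topology is Hausdorff, $M_n\to G\cup X_\infty\notin\T$. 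So the first assertion of Lemma \ref{pisicher} is false: $\T$ is not Fell-closed, and its closure is exactly $\{A\in\Cl(X)\mid X_\infty\subset A\}$ (your miss-condition argument at a point $\gamma\in X_\infty\setminus A$ gives the reverse inclusion).

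For comparison, the paper's own proof establishes only the convergence and then disposes of closedness with the single sentence ``Obviously, this implies that $\T$ is closed.'' That inference is a non sequitur: convergence of the canonical reverse-inclusion net to $X_\infty$ controls the tails of that one net, not the limits of arbitrary nets, or even sequences, formed from members of $\T_0$ (the fact that a convergent sequence together with its limit forms a compact, hence closed, set has no analogue for nets). So the step you said you would scrutinise first is exactly where the published argument breaks down, and your proposal should be completed into the counterexample above rather than into a proof. The defect is not cosmetic: Setting \ref{setter} requires a Fell-closed $\T$ (compactness of the base space feeds the continuous-field machinery of Section \ref{danganit}), so the derivation of Theorem \ref{uvert} from Theorem \ref{consecinte} needs a repair --- for instance, replacing $\T$ by the genuine closure $\{A\in\Cl(X)\mid X_\infty\subset A\}$, which is Fell-closed, at the cost of verifying tameness of the additional restriction sets, or else weakening the closedness hypothesis in Settings \ref{sett} and \ref{setter} to what the semicontinuity argument actually uses.
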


\begin{proof}
Note that $\T_0$ is a net in $\Cl(X)$\,, with the order given by inverse inclusion: $M\le N$ if and only if $M\supset N$. Let us show that $M\to X_\infty$\,, by checking the point (a) of Lemma \ref{indiciaza} (the point (b) clearly holds since $M\supset X_\infty$). Obviously, this implies that $\T$ is closed.

\smallskip
If $M\ni x_M\to x\in X$, we need to show that $x\in X_\infty$\,. If not then $x\in X_0$\,, and since the singleton $\{x\}$ is open in $X_0$ then $x_M=x$\, eventually. Hence there exists $N_0\subset X_0$ with finite complement such that $x=x_M\in M_0$ for every $M_0\subset N_0$ with finite complement. This is clearly absurd (set $M_0=N_0\!\setminus\!\{x\}$).
\end{proof}

\begin{Remark}\label{subceva}
{\rm It is clear that $\T_0$ can be replaced by one of its subnets. If, for example, $x_0\in X_0$ and $\{R_n\!\mid\! n\in \N\}$ is a sequence of positive numbers with $R_n\to\infty$\,, then
$$
\T_0':=\big\{B_{x_0}(R_n)^c\,\big\vert\, n\in\N\big\}
$$
is such a subnet and one may state a result analog to Theorem \ref{uvert}.}
\end{Remark}

Note that all the ingredients of Setting \ref{setter} are present. The mapping $\th:=(\r,\d)^{-1}$ here is just the identity, so \eqref{vinalla} now reduces to \eqref{fraier}. {\it Then Theorem \ref{uvert} straightforwardly follows from Theorem \ref{consecinte}.} Remark \ref{vine} also applies.

\begin{Remark}\label{grafuri}
{\rm One may give to operators of the form \eqref{fraier} a graph-theoretical flavor, making the connection with \cite{LS}. Suppose that $H$ is a symmetric matrix with positive entries, i.\,e.\;$H(x,y)=H(y,x)\ge 0$ for every $x,y\in X_0$\,. If in fact $H(x,y)$ is a positive integer, one may think that $X_0$ is the vertex set of an undirected graph and that there are precisely $H(x,y)$ edges connecting $x$ to $y$\,. The connectivity in the graph is given simply by $x\sim y$ if and only if $H(x,y)\ne 0$\,. Then the expression $\sup_x\sum_y\!H(x,y)=:\sup_x{\rm deg}(x)$\,, appearing in the Schur estimate and connected to the Hahn norm as in Remark \ref{vine}, may be seen as as the upper bound over $x$  of the number of neighbors of the vertex $x$ "multiplicities included". Operators as \eqref{fraier} are then a sort of adjacency matrix of the graph, and the Laplace operator version
$$
(\Delta u)(x):=\sum_{y\in X_0}\!H(x,y)\big(u(x)-u(y)\big)
$$
may also be dealt with. If $H$ is no longer integer-valued, we can still think of such operators as weighted adjacency or weighted Laplacian on graphs. Unbounded versions are available, very interesting, and not yet treated by our groupoid methods. For the full formalism and many applications, we refer to \cite{FLW}. Note that passing from ${\sf H}$ to ${\sf H}_{M_0}$ boils down to erase both the vertices belonging to $X_0\!\setminus\!M_0$ and the edges with one end in such vertices. Of course, this corresponds to the notion of a vertex-induced subgraph.}
\end{Remark}

\begin{Example}\label{crupuri}
{\rm We indicate very briefly the particular {\it group case.} If $X_0$ is a countable discrete group, it is known \cite[2.3]{Will} that there exist left-invariant metrics $\delta$ such that $(X_0,\delta)$ is a uniformly discrete metric space with bounded geometry (word metrics in the finitely generated case are particular examples). Property A is very common among these metric spaces. It is much more general than amenability (all the free groups on a finite number of generators have property A). The property is stable under subgroups, extensions, wreath products, amalgamated free products and inductive limits with injective connection maps. Many other examples may be found in \cite[2.3]{Will}. In the group case there is a third point of view upon the $C^*$-algebra ${\rm C}^*\big[\Xi(X_0,\delta)\big]\cong{\rm Roe}(X_0,\delta)$\,. It is isomorphic with the crossed product $\ell^\infty(X_0)\!\rtimes\!X_0$\,, coming from the left action of $X_0$ on itself, that extends to a continuous action on the Stone-\u Cech compactification $\beta X_0$ seen as the Gelfand spectrum of the Abelian $C^*$-algebra $\ell^\infty(X_0)$\,. Then the non-invariant restrictions to the sets $M\subset\beta X_0$ correspond to crossed products with partial actions \cite{Ab,Ex1,Ex2}. However the groupoid approach, besides being more general, seems the best $C^*$-algebraical strategy to prove Persson-type formulae.
}
\end{Example}

{\bf Acknowledgements:} The author has been supported by the Fondecyt Project 1200884. He is grateful to an anonymous referee for a careful reading of the manuscript and for valuable remarks. A useful discussion with Professor Jiawen Zhang  is also acknowledged.



\begin{thebibliography}{1000}

\bibitem{Ab} F. Abadie: \emph{On Partial Actions and Groupoids}, Proc. AMS, {\bf 132} (4), 1037--1047, (2003).

\bibitem{Ag} S. Agmon: \emph{Lectures on Exponential Decay of Solutions of Second-Order Elliptic Equations}, Princeton University Press, Princeton, 1982.

\bibitem{AZ} K. Austin and J. Zhang: \emph{Limit Operator Theory for Groupoids,} Trans. Amer. Math. Soc. \textbf{373}(4), 2861--2911, (2020).

\bibitem{AMP} N. Athmouni, M. M\u antoiu and R. Purice: \emph{On the Continuity of Spectra for Families of Magnetic Pseudodifferential Operators}, J. Math. Phys. \textbf{51}, (2010).

\bibitem{Bec} S. Beckus: \emph{Spectral Approximation of Aperiodic Schr\"odinger Operators}, PhD Thesis, Friedrich-Schiller-Universit\"at, Jena.

\bibitem{BBdN1} S. Beckus, J. Bellissard and G. de Nittis: \emph{Spectral Continuity for Aperiodic Quantum Systems I. General Theory}, J. Functional Analysis, \textbf{275}, 2917--2977,  (2018).

\bibitem{BBdN2} S. Beckus, J. Bellissard and G. de Nittis: \emph{Spectral Continuity for Aperiodic Quantum Systems II. Applications of a Folklore Theorem, J. Math. Phys. \textbf{61} (12), 123505, (2020)}.

\bibitem{Be} J. Bellissard: \emph{Lipschitz Continuity of Gap Boundaries for Hofstadter-Like Spectra}, Commun. Math. Phys. \textbf{160}, 599--614,  (1994).


\bibitem{Bl} E. Blanchard: \emph{D\'eformations de C*-algebres de Hopf}, Bull. Sc. math. \textbf{124}(1), 141--215, (1996).

\bibitem{BO} N. Brown and N. Ozawa: \emph{$C^*$-Algebras and Finite-Dimensional Approximations}, \textbf{88}, Graduate Studies in Mathematics. American Mathematical Society, Providence, RI, 2008.

\bibitem{CFKS} H. L. Cycon, R. Froese, W. Kirsch and B. Simon: \emph{Schr\"odinger Operators with Application to Quantum Mechanics and Global Geometry}, Berlin, Heidelberg, New York: Springer-Verlag, 1987. 

\bibitem{DLMSY} J. Dodziuk, P. Linnell, V. Mathias, T. Schick and S. Yates: \emph{Approximating $L^2$-Invariants and the Atiyah Conjecture}, Comm. Pure Appl. Math. {\bf 56}, 839--873, (2003).

\bibitem{dV} J. de Vries: \emph{Elements of Topological Dynamics}, in Mathematics and its Applications, Kluwer Acad. Publ., Dordrecht, 1993.

\bibitem{Ex} R. Exel: \emph{Inverse Semigroups and Combinatorial C*-Algebras}, Bull. Braz. Math. Soc., \textbf{39}(2), 191--313, (2008).

\bibitem{Ex1} R. Exel: \emph{Invertibility in Groupoid $C^*$-Algebras}, Operator Theory, Operator Algebras and Applications, 173--183, Oper. Theory Adv. Appl., \textbf{242}, \textcolor{red}{Birkh\"auser}/Springer, Basel, 2014.

\bibitem{Ex2} R. Exel: \emph{Partial Dynamical Systems, Fell Bundles and Applications}, Mathematical Surveys and Monographs, \textbf{224}, American Mathematical Society, Providence, RI. vi+321, 2017.

\bibitem{Fe} J. M. G. Fell: \emph{A Hausdorff Topology for the Closed Subsets of a Locally Compact Non-Hausdorff Space}, Proc. Amer. Math. Soc. \textbf{13}, 472--476 (1962).

\bibitem{FLW} R. Frank, D. Lenz and D. Wingert: \emph{Intrinsic Metrics for Non-local Symmetric Dirichlet Forms and Applications to Spectral Theory}, J. Funct. Anal. \textbf{266}(8), 4765--4808, (2014).

\bibitem{Gr} G. Grillo: \emph{On Persson's Theorem in Local Dirichlet Spaces}, Zeitschrift fur Analysis and ihre Anwendungen, {\bf 7}, 329--338, (1998).

\bibitem{HLS} N. Hilsulm, V. Lafforgue and G. Skandalis:  \emph{Counterexamples to the Baum-Connes Conjecture}, GAFA, \textbf{12}, 330--354, (2002).

\bibitem{KL} M. Keller and D. Lenz: \emph{Unbounded Laplacians on Graphs: Basic Spectral Properties and the Heat Equation}, Math. Model. Nat. Phenom. {\bf 5}, 198--224, (2010).

\bibitem{KS} M. Khoshkam and G. Skandalis: \textit{Regular Representation of Groupoid $C^*$-algebras and Applications to Inverse Semigroups}, J. reine angew. Math. {\bf 546}, 47--72, (2002).

\bibitem{LR} N. P. Landsman, B. Ramazan: \emph{Quantization of Poisson Algebras Associated to Lie Algebroids}, in:
Groupoids in Analysis, Geometry, and Physics, Boulder, CO, 1999, Contemp. Math., \textbf{282}, Amer. Math. Soc., Providence, R, 159--192, 2001.

\bibitem{LS} D. Lenz and P. Stollmann: \emph{On the Decomposition Principle and a Persson Type Theorem for General Regular Dirichlet Forms}, J. Spectral Th. \textbf{9}, 1089--1113, (2019).

\bibitem{Ma} M. M\u antoiu: \emph{$C^*$-Algebraic Spectral Sets, Twisted Groupoids and Operators}, Preprint ArXiV and to appear in J. Operator Theory.

\bibitem{MR} M. M\u antoiu and M. Ruzhansky: \emph{Pseudo-differential Operators, Wigner Transform and Weyl Systems on Type I Locally Compact Groups}, Documenta Mathematica, \textbf{22}, 1539--1592,  (2017).

\bibitem{MRW} P. Muhly, J. Renault and D. Williams: \emph{Equivalence and Isomorphism for Groupoid $C^*$-Algebras}, J. Operator Theory {\bf 17}, 3--22, (1987).

\bibitem{NP} V. Nistor and N. Prudhon: \emph{Exhaustive Families of Representations and Spectra of Pseudodifferential Operators} J. Operator Theory, \textbf{78}(2), 247--279, (2017).

\bibitem{Pa} A. Paterson: \emph{Groupoids, Inverse Semigroups, and their Operator Algebras}, Birkh\"auser, 1999.

\bibitem{Pe} A. Persson: \emph{Bounds for the Discrete Part of  the Spectrum of a Semi-bounded Schr\"odinger Operator}, Math. Scand. {\bf 8}, 143--153, (1960).

\bibitem{Re} J. Renault: \emph{A Groupoid Approach to $C^*$-Algebras}, Lecture Notes in Math. {\bf 793}, Springer- Verlag, Berlin, 1980.

\bibitem{Roe} J. Roe: \emph{Lectures on Coarse Geometry}, University Lecture Series, {\bf 31}, American Mathematical Society, 2003.

\bibitem{STY} G. Skandalis, J.-L. Tu and G. Yu: \emph{The Coarse Baum-Connes Conjecture and Groupoids}, Topology, {\bf 41}, 807--834, (2002).

\bibitem{SWi} I. \v Spakula and R. Willett: \emph{A Metric Approach to Limit Operators}, Trans. Amer. Math. Soc. \textbf{369}, 263--308, (2017). 

\bibitem{Tu} J-L Tu: \emph{Remarks on Yu's Property A for Discrete Metric Spaces and Groups}, Bull. Soc. Math. France, \textbf{129}(1), 115--139, ( 2001).

\bibitem{Wi} D. Williams: \emph{Crossed Products of $C^*$-Algebras}, Mathematical Surveys and Monographs, \textbf{134}, American Mathematical Society, 2007.

\bibitem{Wi1} D. Williams: \emph{A Tool Kit for Groupoid C*-Algebras}, Mathematical Surveys and Monographs, \textbf{241}, AMS, Providence, Rhode Island, 2019.

\bibitem{Will} R. Willett: \emph{Some Notes on Property A}, in \emph{Limits of Graphs in Group Theory and Computer Science}, 191--281, EPFL Press, Lausanne, 2009.

\bibitem{Yu} G. Yu: \emph{The Coarse Baum-Connes Conjecture for Spaces which Admit a Uniform Embedding into Hilbert Space}, Inventiones Math. \textbf{139}, 201--240, (2000).

\end{thebibliography}
 \end{document}